\documentclass[a4paper,reqno,11pt,draft]{amsart}
\usepackage{verbatim}
\usepackage{amssymb,amsmath,amsthm}
\usepackage{amsfonts}
\usepackage{array}

\textwidth=15.6cm
\oddsidemargin=0.5cm
\evensidemargin=0.5cm
\textheight=22cm

\def\HH{\overline{H}}

\newtheorem{theorem}{Theorem}[section]
\newtheorem{lemma}{Lemma}[section]
\newtheorem{corollary}{Corollary}[section]

\newtheorem{remark}{{\rm\bf Remark}}[theorem]

\begin{document}

\title[New properties of multiple harmonic sums]{New properties of multiple harmonic sums
modulo $p$ and $p$-analogues of Leshchiner's series}

\author{Kh. Hessami Pilehrood}
\address{Department of Mathematics and Statistics \\ Dalhousie University \\ Halifax, Nova Scotia, B3H 3J5, Canada}
\email{hessamik@gmail.com}
\thanks{}

%    author two information
\author{T. Hessami Pilehrood}
\address{Department of Mathematics and Statistics \\ Dalhousie University \\ Halifax, Nova Scotia, B3H 3J5, Canada}
\email{hessamit@gmail.com}
\thanks{}

%    author three information
\author{R. Tauraso}

\address{Dipartimento di Matematica \\
Universit\`a di Roma ``Tor Vergata'' \\ 00133 Roma, Italy}
\email{tauraso@mat.uniroma2.it}
\thanks{}

\subjclass[2010]{Primary 11A07, 11M32; Secondary 11B65,  11B68}
\keywords{Multiple harmonic sum, Bernoulli number, congruence, multiple zeta value}

\date{}

\begin{abstract}
In this paper we present some new
binomial identities for multiple harmonic sums
whose indices are the sequences $(\{1\}^a,c,\{1\}^b),$ $(\{2\}^a,c,\{2\}^b)$
and prove a number of congruences for these sums modulo a prime $p.$
The congruences obtained allow us to find nice $p$-analogues of Leshchiner's
series for zeta values and to refine a result due to M.~Hoffman and J.~Zhao
about the set of generators of the multiple harmonic sums of weight $7$ and $9$ modulo~$p$.
As a further application we provide a new proof of
Zagier's formula for $\zeta^{*}(\{2\}^a,3,\{2\}^b)$ based on a finite identity
for partial sums of the zeta-star series.
\end{abstract}

\maketitle

\section{Introduction}
\label{intro}

In the last few years there has been a growing attention in the study of $p$-adic analogues of
various binomial series related to multiple zeta values, which are nested generalizations of the classical
Riemann zeta function $\zeta(s)=\sum_{n=1}^s1/n^s.$ The main reason of interest of such $p$-analogues is that they are related
to divisibility properties of multiple harmonic sums
which can be considered as elementary ``bricks'' for expressing complicated congruences.
Before discussing this further we recall the precise definitions of such objects.

For $r\in {\mathbb N},$  ${\bf s}=(s_1, s_2, \ldots, s_r)\in ({\mathbb Z}^{*})^r,$ and
a non-negative integer $n,$  the {\it alternating multiple harmonic sum} is defined by
\begin{equation*}
H_n(s_1, s_2, \ldots, s_r)=\sum_{1\le k_1<k_2<\ldots<k_r\le n}\prod_{i=1}^r
\frac{\textup{sgn}\,(s_i)^{k_i}}{k_i^{|s_i|}}
\end{equation*}
and the {\it ``odd'' alternating multiple harmonic sum} is given by
\begin{equation*}
\HH_n(s_1, s_2, \ldots, s_r)=\sum_{0\le k_1<k_2<\ldots<k_r< n}\prod_{i=1}^r
\frac{\textup{sgn}\,(s_i)^{k_i}}{(2k_i+1)^{|s_i|}}.
\end{equation*}
If all $s_1, \ldots, s_r$ are positive, then $H_n(s_1,\ldots, s_r)$ and $\HH_n(s_1, s_2, \ldots, s_r)$
are called {\it multiple harmonic sum} (MHS) and {\it ``odd'' multiple harmonic sum,} respectively.
For ${\bf s}=(s_1, \ldots, s_r)\in {\mathbb N}^r,$ it is also convenient to consider the {\it  multiple star harmonic  sum} (or non-strict MHS)
\begin{equation*}
S_n(s_1,\ldots,s_r)=\sum_{1\le k_1\le\cdots\le k_r\le n}\frac{1}{k_1^{s_1}\cdots k_r^{s_r}}.
\end{equation*}
The integers  $l({\bf s}):=r$ and $w:=|{\bf s}|:=\sum_{i=1}^r|s_i|$ are called the length (or depth) and the weight
of a multiple harmonic sum.
By convention, we put  $H_n({\bf s})=\HH_n({\bf s})=0$ if $n<r,$
and $H_n(\emptyset)=\HH_n(\emptyset)=S_n(\emptyset)=1.$ By $\{s_1, s_2, \ldots, s_j\}^m$ we denote the
sequence of length $mj$ with $m$ repetitions of $(s_1, s_2, \ldots, s_j)$.

MHSs occur naturally in different areas of mathematics such as combinatorics, number theory, algebraic geometry, quantum groups and knot theory.
For a long time, MHSs have been of interest to physicists \cite{Bl:99, V:99}, who realized these sums
as Mellin transforms of some functions occurring in quantum field theories.
The limit cases of MHSs give rise to multiple zeta values (MZVs):
\begin{equation*}
\zeta(s_1, s_2, \ldots, s_r)=\lim_{n\to\infty}H_n(s_1, s_2, \ldots, s_r),
\end{equation*}
\begin{equation*}
\zeta^{*}(s_1, s_2, \ldots, s_r)=\lim_{n\to\infty}S_n(s_1, s_2, \ldots, s_r)
\end{equation*}
defined for $s_1, \ldots s_{r-1}\ge 1$ and $s_r\ge 2$ to ensure convergence of the series.

The earliest results on MZVs are due to Euler who
elaborated a method to reduce double sums of small weight to certain rational linear combinations of products of single sums.
In particular, he proved the simple but non-trivial relation $\zeta(1,2)=\zeta(3)$ and determined
the explicit values of the zeta function at even integers:
\begin{equation*}
\zeta(2m)=\frac{(-1)^mB_{2m}}{2(2m)!}\,(2\pi)^{2m},
\end{equation*}
where $B_k\in {\mathbb Q}$ are the Bernoulli numbers defined by the generating function
\begin{equation*}
\frac{x}{e^x-1}=\sum_{k=0}^{\infty}B_k \frac{x^k}{k!}.
\end{equation*}
It is easy
to verify that $B_0=1,$ $B_1=-1/2,$ $B_2=1/6,$ $B_4=-1/30,$ and $B_{2m+1}=0$ for $m\ge 1.$
The denominators of the Bernoulli numbers $B_{2m}$ are completely characterized
by the Clausen--von Staudt Theorem \cite[p.~233]{Ir}, which states that
\begin{equation*}
\text{for} \quad m\in{\mathbb N}, \quad B_{2m}+\sum_{p-1|2m}\frac{1}{p} \quad\text{is an integer}.
\end{equation*}
Early results for the values modulo $p$ of the multiple harmonic sums, when $p$ is a prime greater than $3,$
go back to the classical Wolstenholme's Theorem \cite{W}:
\begin{equation*}
H_{p-1}(1)\equiv 0 \pmod{p^2}, \qquad H_{p-1}(2)\equiv 0 \pmod{p}.
\end{equation*}
Glaisher \cite{G} in 1900, and Lehmer \cite{L:38} in 1938, proved that
even the multiple harmonic sums $H_{p-1}(m)$ modulo a higher power of a prime $p\ge m+3,$
are related to the Bernoulli numbers:
\begin{equation*}
H_{p-1}(m)\equiv \begin{cases}
\frac{m(m+1)}{2(m+2)}\,p^2\, B_{p-m-2}\pmod{p^3}     & \quad \text{if} \quad m \quad \text{is odd}, \\[3pt]
\frac{m}{m+1}\,p\,B_{p-m-1} \quad\,\,\,\, \pmod{p^2} & \quad\text{if} \quad m \quad \text{is even}.
\end{cases}
\end{equation*}

The systematic study of MZVs began in the early 1990s with the works of Hoffman \cite{Ho:92}
and Zagier \cite{Za:92}.
The set of the MZVs has a rich algebraic structure given by the shuffle and the stuffle (harmonic shuffle or quasi-shuffle) relations.
These follow from the representation of multiple zeta values in terms of iterated integrals and harmonic sums,
respectively.
There are many conjectures concerning multiple zeta values and
despite some recent progress, lots of open questions still remain to be answered.

Let ${\mathfrak{Z}}_w$ denote the ${\mathbb Q}$-vector space spanned by the set of multiple zeta values $\zeta(s_1,\ldots,s_r)$
with $s_r\ge 2$ and the total weight $w=s_1+\cdots+s_r,$ and let ${\mathfrak Z}$ denote the ${\mathbb Q}$-vector space spanned by all
multiple zeta values over ${\mathbb Q}.$ %By the stuffle relation, we have ${\mathfrak{Z}}_k\cdot{\mathfrak{Z}}_l\subset{\mathfrak{Z}}_{k+l}.$
A conjecture of Zagier \cite{Za:92} states that the dimension of the ${\mathbb Q}$-vector space
${\mathfrak{Z}}_w$ is given by the Perrin numbers $d_w$ defined for $w\ge 3$ by the recurrence
\begin{equation*}
d_w=d_{w-2}+d_{w-3}
\end{equation*}
with the initial conditions $d_0=1,$ $d_1=0,$ $d_2=1.$ The upper bound $\dim{\mathfrak Z}_w\le d_w$ was proved independently by Goncharov \cite{Go} and Terasoma \cite{Ter}.

It is easy to see that the Perrin number $d_w$ is equal to the number of multiple zeta values $\zeta(s_1,\ldots,s_r)$ with
$s_1+\cdots+s_r=w$ and each $s_j\in\{2,3\}.$
While investigating the deep algebraic structure of ${\mathfrak Z}$, Hoffman \cite{Ho:97}
conjectured that the MZVs $\zeta(s_1, \ldots, s_r)$ of weight $w$ with $s_j\in\{2,3\}$ span the ${\mathbb Q}$-space ${\mathfrak Z}_w.$
Very recently, this conjecture was proved using motivic ideas by Brown~\cite{Brown1}.
So the main problem which remains open is proving that the numbers $\zeta(s_1,\ldots,s_r)$ with $s_j\in\{2,3\}$
are linearly independent over ${\mathbb Q}.$

According to Zagier's  conjecture, a basis for ${\mathfrak Z}_w$ for $2\le w\le 9$ should be given as follows (see \cite{Wa}):
\begin{equation*}
\begin{array}{lll}
w=2, &  d_2=1, & \zeta(2); \\
w=3, &  d_3=1, & \zeta(3); \\
w=4, &  d_4=1, & \zeta(2,2); \\
w=5, &  d_5=2, & \zeta(2,3), \zeta(3,2); \\
w=6, &  d_6=2, & \zeta(2,2,2), \zeta(3,3);\\
w=7, &  d_7=3, & \zeta(2,2,3), \zeta(2,3,2), \zeta(3,2,2); \\
w=8, &  d_8=4, & \zeta(2,2,2,2), \zeta(2,3,3), \zeta(3,2,3), \zeta(3,3,2); \\
w=9, &  d_9=5, & \zeta(2,2,2,3), \zeta(2,2,3,2), \zeta(2,3,2,2), \zeta(3,2,2,2), \zeta(3,3,3).
\end{array}
\end{equation*}
Brown \cite{Brown1} proved that
part of this conjectural basis, namely the multiple zeta values of the form
$\zeta(\{2\}^a,3,\{2\}^b)$ can be expressed in terms of ordinary zeta values.
The exact formulae were found and proved by Zagier \cite{Za:12}:
\begin{equation*}
\zeta(\{2\}^a,3,\{2\}^b)=2\sum_{r=1}^K(-1)^r\left(\binom{2r}{2a+2}-(1-2^{-2r})\binom{2r}{2b+1}\right)\zeta(2r+1)H(k-r)
\end{equation*}
and
\begin{equation} \label{zzzz}
\zeta^*(\{2\}^a,3,\{2\}^b)=-2\sum_{r=1}^K\!\left(\binom{2r}{2a}-\delta_{r,a}-(1-2^{-2r})\binom{2r}{2b+1}\right)\zeta(2r+1)H^*(K-r),
\end{equation}
where $K=a+b+1$ and
\begin{equation*}
H(n)=\zeta(\{2\}^n)=\frac{\pi^{2n}}{(2n+1)!}, \quad H^*(n)=\zeta^*(\{2\}^n)=2(1-2^{1-2n})\zeta(2n), \,\, n\ge 0.
\end{equation*}
The theory of multiple harmonic sums modulo $p$ bears many similarities with the theory of multiple zeta values.
In the early 2000s Zhao \cite{Zhao:06, Zhao:12} began to generalize the Wolstenholme theorem and Glaisher--Lehmer congruences to other multiple harmonic sums with special emphasis to the cases where the sums are divisible by a high power of a prime.
Later, Hoffman \cite{Ho:03} showed that the algebraic setup developed in order to study MZVs can be extended
to deal with MHSs as well.
In \cite{Hof:07}, Hoffman described the possible relations modulo $p$ between MHSs of given weight
not exceeding~$9.$

For a positive integer $w,$ let $c_w$ denote the minimal number of  harmonic sums of weight $w$
which are needed to generate all MHSs of weight $w$ modulo $p$ for $p>w+1.$
For the first few values of $c_w,$ Hoffman \cite{Hof:07} obtained the following table:
\begin{equation*}
\begin{tabular}{|c|c|c|c|c|c|c|c|c|c|}
  \hline
    $w$ & 1 & 2 & 3 & 4 & 5 & 6 & 7 & 8 & 9 \\
  \hline
  $c_w$ & 0 & 0 & 1 & 0 & 1 & 1 & 2 & 2 & 2 \\
  \hline
\end{tabular}
\end{equation*}
The value $c_9=2$ was calculated conditionally under the assumption of the congruence
\begin{equation} \label{ccc}
S_{p-1}(1,1,1,6)\equiv \frac{1}{54}\,B_{p-3}^3+\frac{1889}{648}\,B_{p-9} \pmod{p},
\end{equation}
which was first conjectured and verified for all primes $10<p<2000$ by Zhao \cite[Prop.~3.2]{Zhao:06}.

The purpose of the present paper is to study multiple harmonic sums of the form
\begin{equation*}
S_n(\{2\}^a,c,\{2\}^b), \quad S_n(\{1\}^a,c,\{1\}^b), \qquad c=1,2,3,\ldots.
\end{equation*}
We  prove  some new
binomial identities for these sums and apply them to obtain congruences modulo a prime $p$ for
\begin{equation} \label{abc}
S_{p-1}(\{2\}^a,1,\{2\}^b), \quad S_{p-1}(\{2\}^a,3,\{2\}^b), \quad S_{p-1}(\{1\}^a,2,\{1\}^b).
\end{equation}
This allows us to give a new proof of Zagier's formula (\ref{zzzz}) for
$\zeta^{*}(\{2\}^a,3,\{2\}^b)$
based on a finite identity for $S_n(\{2\}^a,3,\{2\}^b)$
and to formulate its finite $p$-analogue.
In addition, those congruences of the finite sums (\ref{abc})
sharpen Hoffman's and Zhao's results on MHSs of small weights. Indeed,
we show that in weight $7$ the number of generators $c_7$ equals $1$, and in weight $9$ we prove congruence (\ref{ccc}), which implies the equality $c_9=2$ unconditionally.
From \cite[Section 10]{Hof:07} and Corollaries \ref{c3} and \ref{c4.2} below, the set of generators modulo $p$ for multiple harmonic sums
$S_{p-1}({\bf s})$ of weight $w=|{\bf s}|\le 9$  when $p>w+1$ in terms of Bernoulli numbers is as follows:
\begin{equation*}
\begin{array}{lll}
w=1, &  c_1=0, & 0; \\
w=2, &  c_2=0, & 0; \\
w=3, &  c_3=1, & S_{p-1}(1,2)\equiv B_{p-3}; \\
w=4, &  c_4=0, & 0; \\
w=5, &  c_5=1, & S_{p-1}(1,4)\equiv B_{p-5};\\
w=6, &  c_6=1, & S_{p-1}(1,1,4)\equiv -\frac{1}{6}\,B_{p-3}^2; \\
w=7, &  c_7=1, & S_{p-1}(1,6)\equiv B_{p-7}; \\
w=8, &  c_8=2, & S_{p-1}(1,1,6), \,\, S_{p-1}(1,4)S_{p-1}(1,2)\equiv B_{p-5}B_{p-3}; \\
w=9, &  c_9=2, & S_{p-1}(1,8)\equiv B_{p-9}, \,\, S_{p-1}(1,2)S_{p-1}(1,1,4)\equiv -\frac{1}{6}\,B_{p-3}^3.
\end{array}
\end{equation*}
Hoffman \cite{Hof:07} conjectured that all multiple harmonic sums $S_{p-1}({\bf s})$ (or $H_{p-1}({\bf s})$) can be written
modulo $p$ as sums of products of ``height one sums'' $S_{p-1}(1,\ldots,1,2h)$ (or $H_{p-1}(1,\ldots,1,2h)$) with rational coefficients.
Note that this conjecture has been confirmed only for sums of weight~$w$ with $w\le 9.$
Zhao \cite{Zhao:11} continued
computations of sets of generators for $w=10, 11, 12.$ Using reduction, stuffle and duality relations he found
that the MHSs of weight $10$ can be generated by the set
\begin{equation*}
B_{p-3}B_{p-7}\equiv S_{p-1}(1,2)S_{p-1}(1,6), \,\,\,\, B_{p-5}^2\equiv S_{p-1}^2(1,4),
\end{equation*}
\begin{equation*}
S_{p-1}(1,1,8), \,\,\,
S_{p-1}(1,1,1,1,6), \,\,\, S_{p-1}(2,2,1,4,1),
\end{equation*}
which according to  Hoffman's conjecture still contains the extra term $S_{\!p-1}\!(2,\!2,\!1,\!4,\!1).$
%%%%%%%%%%%%%
Zhao \cite{Zhao:12a} kindly communicated us that using our Theorems \ref{T3.3.1} and \ref{T3.3.2} (see below) it is possible to reduce
the set of $8$ generators for  MHSs of weight $11$ found in \cite{Zhao:11} to the set of $5$ elements:
\begin{equation*}
B_{p-11}\equiv S_{p-1}(1,10), \, B_{p-5}B_{p-3}^2, \, B_{p-3}S_{p-1}(1,1,6), \, S_{p-1}(1,1,1,8), \, S_{p-1}(5,3,2,1).
\end{equation*}
In contrast to the corresponding conjecture for MZVs, not all ``height one sums'' are
claimed to be linearly independent modulo $p$, because
they satisfy the duality relation \cite[Theorem~5.2]{Hof:07}:
\begin{equation*}
S_{p-1}(\{1\}^{k-1},h)\equiv (-1)^{k+h}S_{p-1}(\{1\}^{h-1},k) \pmod{p}, \quad p>\max(h,k).
\end{equation*}
In \cite[Remark 2.3]{Zhao:11}, Zhao conjectured that weight-$8$ generators $S_{p-1}(1,1,6)$ and $B_{p-3}B_{p-5}$ should
be linearly independent modulo $p$ over ${\mathbb Q}(10),$ where
\begin{equation*}
{\mathbb Q}(w):=\{a/b\in {\mathbb Q}: a/b \,\,\, \text{is reduced and if a prime}\,\,\, q|b \,\,\, \text{then} \,\,\, q\le w\}.
\end{equation*}
Zhao \cite{Zhao:11} also formulated a general conjecture on linear independence modulo $p$ of products of Bernoulli numbers
\begin{equation*}
B_{p-i_1}B_{p-i_2}\cdots B_{p-i_r}
\end{equation*}
over ${\mathbb Q}(w+2),$
where $i_1, \ldots, i_r$ are odd indices  greater than $1$ with $i_1+\ldots+i_r=w,$ which in particular,
implies the linear independence of weight-$9$ generators $B_{p-3}^3$ and $B_{p-9}$ modulo $p$ over ${\mathbb Q}(11).$

As an application
of our results to ordinary zeta values, we consider generalizations of
Ap\'ery's famous  series
\begin{equation} \label{eq01}
\zeta(2)=3\sum_{k=1}^{\infty}\frac{1}{k^2\binom{2k}{k}}, \qquad
\zeta(3)=\frac{5}{2}\sum_{k=1}^{\infty}\frac{(-1)^{k-1}}{k^3\binom{2k}{k}},
\end{equation}
used in his irrationality proof of $\zeta(2)$ and $\zeta(3).$
Among different extensions \cite{Le:81, BB:97, BBB:2006, HH:08, HH:2011} of series (\ref{eq01}),
we focus on those obtained by Leshchiner \cite{Le:81} for values of the Riemann zeta function $\zeta(s)$  and Dirichlet beta function $\beta(s)=\sum_{n=0}^{\infty}(-1)^n/(2n+1)^s.$
Note that the value  $\beta(2m+1)$ as well as $\zeta(2m+2)$ can be expressed in terms of $\pi,$
\begin{equation*}
\beta(2m+1)=\frac{(-1)^{m}E_{2m}}{2^{2m+2}(2m)!}\,\pi^{2m+1},
\end{equation*}
where $m$ is a non-negative integer and $E_{2m}\in {\mathbb Q}$ are  Euler numbers.

Leshchiner established the following four expansions in terms of
multiple harmonic sums that generalize series (\ref{eq01}):
\begin{equation}\label{eq02}
\begin{split}
\qquad\qquad \Bigl(1-\frac{1}{2^{2m+1}}\Bigr)\zeta(2m+2)&=\frac{3}{2}\sum_{k=1}^{\infty}\frac{(-1)^m H_{k-1}(\{2\}^m)}{k^2\binom{2k}{k}} \\[3pt]
&\quad+2\sum_{j=1}^m\sum_{k=1}^{\infty}\frac{(-1)^{m-j}H_{k-1}(\{2\}^{m-j})}{k^{2j+2}\binom{2k}{k}},
\end{split}
\end{equation}
\begin{equation}\label{eq03}
\begin{split}
\qquad\qquad (-1)^{m-1}\cdot\zeta(2m+3)&=\frac{5}{2}\sum_{k=1}^{\infty}\frac{(-1)^{k}H_{k-1}(\{2\}^m)}{k^3\binom{2k}{k}} \\[3pt]
&\quad+2\sum_{j=1}^m\sum_{k=1}^{\infty}\frac{(-1)^{k-j}H_{k-1}(\{2\}^{m-j})}{k^{2j+3}\binom{2k}{k}},
\end{split}
\end{equation}
\begin{equation}
\begin{split}
\qquad\qquad\Bigl(1-\frac{1}{2^{2m+2}}\Bigr)\zeta(2m+2)&=\frac{5}{4}\sum_{k=0}^{\infty}\frac{(-1)^{k+m}\binom{2k}{k}\HH_k(\{2\}^m)}{16^k(2k+1)^2}
\\[3pt]
&+\sum_{j=1}^m\sum_{k=0}^{\infty}\frac{(-1)^{k+m-j}\binom{2k}{k}\HH_k(\{2\}^{m-j})}{16^k(2k+1)^{2j+2}}, \label{eq05}
\end{split}
\end{equation}
\begin{equation}\label{eq04}
\beta(2m+1)=\frac{3}{4}\sum_{k=0}^{\infty}\frac{(-1)^m\binom{2k}{k}\HH_k(\{2\}^m)}{16^k(2k+1)}
+\sum_{j=1}^m\sum_{k=0}^{\infty}\frac{(-1)^{m-j}\binom{2k}{k}\HH_k(\{2\}^{m-j})}{16^k(2k+1)^{2j+1}}.
\end{equation}
Indeed,  if we put $m=0$ in (\ref{eq02}), (\ref{eq03}), we get Ap\'ery's series (\ref{eq01}).

Recently, the authors \cite{Ta:10, HH:11, HHT:11},  showed that the series (\ref{eq02}), (\ref{eq03}) for $m=0, 1$
and the series (\ref{eq05}), (\ref{eq04})  for $m=0$ admit very nice $p$-analogues. Indeed, if we truncate
the series (\ref{eq02}), (\ref{eq03}) with $m=0$ or $1$ up to $p-1$ and the series (\ref{eq05}), (\ref{eq04})  with $m=0$ up to $(p-3)/2,$
where $p$ is an odd prime, we get congruences for the finite sums modulo  powers of $p$ expressible in terms of Bernoulli numbers.
In this paper, we extend these results to any non-negative integer $m.$
This is done by showing that the finite $p$-analogues of the series (\ref{eq02})--(\ref{eq04}) are related with
MHSs of the form
\begin{equation*}
H_{p-1}(\{2\}^m,3), \quad H_{p-1}(\{2\}^m,1), \quad \HH_{\frac{p-1}{2}}(\{2\}^m,3), \quad \HH_{\frac{p-1}{2}}(\{2\}^m,1).
\end{equation*}

\section{Identities for multiple harmonic sums}

We start with a list of binomial identities that we will need later. Note that a generalization of \eqref{eq3.2.2} appears in
\cite[(4.20)]{Go:72}

\begin{lemma} \label{l3.2.1}
For any positive integers $m, n$ and a non-negative integer $l$ we have
\begin{equation} \label{eq3.2.1}
\sum_{k=l+1}^n(-1)^{k-1}\binom{mn}{n-k}=
(-1)^l\binom{mn-1}{n-l-1},
\end{equation}
\begin{equation}
2\sum_{k=l+1}^n\frac{k\binom{n}{k}}{\binom{n+k}{k}}=
\frac{n\binom{n-1}{l}}{\binom{n+l}{l}}, \label{eq3.2.1b}
\end{equation}
\begin{equation}
2\sum_{k=1}^n\frac{\binom{n}{k}}{k\binom{n+k}{k}}=\sum_{k=1}^n\frac{1}{k}.
\label{eq3.2.1d}
\end{equation}
If $l\ge 1,$ then
\begin{equation}
\sum_{k=l}^n\frac{\binom{k}{l}}{k^2\binom{k+l}{l}}=\frac{\binom{n}{l}}{l^2\binom{n+l}{l}},
\label{eq3.2.1c}
\end{equation}
If $n\ge 2,$ then
\begin{equation} \label{eq3.2.2}
\sum_{k=1}^n\frac{(-1)^kk^2\binom{n}{k}}{\binom{n+k}{k}}=0.
\end{equation}
\end{lemma}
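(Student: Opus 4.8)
\emph{Overall strategy.} The plan is to treat all five identities with the same elementary toolkit: re-indexing plus telescoping for the first four, and a beta-integral representation for the last. I would dispose of \eqref{eq3.2.1} first. Substituting $j=n-k$ turns the left-hand side into $(-1)^{n-1}\sum_{j=0}^{n-l-1}(-1)^j\binom{mn}{j}$, so everything reduces to the classical partial alternating binomial sum $\sum_{j=0}^{M}(-1)^j\binom{N}{j}=(-1)^M\binom{N-1}{M}$. The latter is itself a one-line telescope: Pascal's rule gives $(-1)^j\binom{N}{j}=a_j-a_{j-1}$ with $a_j=(-1)^j\binom{N-1}{j}$, and summing collapses to $a_M$. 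Plugging $N=mn$, $M=n-l-1$ and simplifying the sign $(-1)^{n-1}(-1)^{n-l-1}=(-1)^l$ yields the stated right-hand side; here $m$ enters only through the product $mn$.

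Identities \eqref{eq3.2.1b} and \eqref{eq3.2.1c} I would prove as telescoping sums by exhibiting the summand as a consecutive difference of the proposed closed form. For \eqref{eq3.2.1b}, set $g(l)=\frac{n\binom{n-1}{l}}{\binom{n+l}{l}}$; the claim is equivalent to the per-term statement $g(l-1)-g(l)=\frac{2l\binom{n}{l}}{\binom{n+l}{l}}$, which after clearing all binomials to factorials collapses to the elementary $(n+l)-(n-l)=2l$. Summing the telescope from the top, where the boundary value $g(n)=0$ because $\binom{n-1}{n}=0$, gives \eqref{eq3.2.1b}. Identity \eqref{eq3.2.1c} telescopes instead in the upper limit: with $h(N)=\frac{\binom{N}{l}}{l^2\binom{N+l}{l}}$ one checks $h(k)-h(k-1)=\frac{\binom{k}{l}}{k^2\binom{k+l}{l}}$, again reducing to $k^2-(k-l)(k+l)=l^2$ after factorials; the lower boundary term $h(l-1)=0$ since $\binom{l-1}{l}=0$, leaving exactly $h(n)$.

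For \eqref{eq3.2.1d} I would induct on $n$, writing $f(n)$ for twice the left-hand sum and aiming at $f(n)-f(n-1)=\tfrac1n$. The difference splits off the top term $\frac{2}{n\binom{2n}{n}}$ plus $\frac1k$-weighted differences $\frac{\binom{n}{k}}{\binom{n+k}{k}}-\frac{\binom{n-1}{k}}{\binom{n-1+k}{k}}$; the same factorial computation used for \eqref{eq3.2.1c} (with $l\to k$) shows each such difference equals $\frac{k^2((n-1)!)^2}{(n-k)!(n+k)!}$, so the $\frac1k$ cancels one power of $k$ and the whole increment becomes $2\sum_{k=1}^{n}\frac{k((n-1)!)^2}{(n-k)!(n+k)!}=\frac{2}{n^2}\sum_{k=1}^{n}\frac{k\binom{n}{k}}{\binom{n+k}{k}}$. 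Evaluating the last sum by \eqref{eq3.2.1b} with $l=0$ (which reads $\sum_{k=1}^n \frac{k\binom{n}{k}}{\binom{n+k}{k}}=\tfrac n2$) gives exactly $\tfrac1n$, and the base case $f(1)=1$ closes the induction.

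The last identity \eqref{eq3.2.2} is the one where pure telescoping is least transparent, so I would use the beta-integral representation $\frac{1}{\binom{n+k}{k}}=n\int_0^1 t^k(1-t)^{n-1}\,dt$. This turns the sum (extended to $k=0$, which is harmless since $k^2$ kills that term) into $n\int_0^1(1-t)^{n-1}\bigl(\sum_{k=0}^n k^2\binom{n}{k}(-t)^k\bigr)\,dt$, and applying the operator $\bigl(t\tfrac{d}{dt}\bigr)^2$ to $(1-t)^n$ evaluates the inner sum as $-nt(1-t)^{n-1}+n(n-1)t^2(1-t)^{n-2}$. Two elementary beta integrals $B(2,2n-1)$ and $B(3,2n-2)$ then remain, and they cancel exactly for $n\ge 2$. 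I expect the main obstacle to be guessing the telescoping certificates $g$ and $h$ for \eqref{eq3.2.1b}--\eqref{eq3.2.1c} and, above all, recognizing the integral representation as the right device for \eqref{eq3.2.2}; once these are in hand, every remaining step is a routine factorial or beta-function computation, the only care being the bookkeeping of empty sums and vanishing boundary terms.
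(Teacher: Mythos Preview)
Your proposal is correct. For \eqref{eq3.2.1}, \eqref{eq3.2.1b}, and \eqref{eq3.2.1c} you are doing exactly what the paper does: each is a one-line telescope, and your certificates $g(l)$ and $h(N)$ coincide (after a shift of index) with the paper's $G(n,k)$. The two places where you genuinely diverge are \eqref{eq3.2.1d} and \eqref{eq3.2.2}. For \eqref{eq3.2.1d} the paper writes the summand as $f(m,k)$ and exhibits a WZ-type companion $G(m,k)$ satisfying $f(m+1,k)-f(m,k)=G(m,k+1)-G(m,k)$, then sums first in $m$ and then in $k$; your induction on $n$ reaches the same increment $f(n)-f(n-1)=\tfrac1n$ but obtains it by recycling \eqref{eq3.2.1b} at $l=0$, which is a pleasant economy and makes the logical dependence among the identities explicit. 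For \eqref{eq3.2.2} the paper again telescopes, this time with the Gosper certificate $G(n,k)=\dfrac{(-1)^{k-1}k(k-1)(n+k)\binom{n}{k}}{2(n-1)\binom{n+k}{k}}$; your beta-integral argument is a legitimate alternative that trades the need to guess that certificate for two standard integrals, at the cost of stepping outside purely combinatorial manipulations. Either route is short; the paper's has the virtue of uniformity, yours the virtue of being easier to find without a computer.
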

\begin{proof}
For the proof of our identities it is useful to consider the usual binomial coefficient $\binom{r}{k}$
in a more general setting, that is, to allow an arbitrary integer to appear in the lower index of $\binom{r}{k}.$
For this purpose we set
\begin{equation*}
\binom{r}{k}=\begin{cases}
\frac{r(r-1)\cdots (r-k+1)}{k(k-1)\cdots 1}  &\qquad\text{if integer}\quad k\ge 0,\\
0  &\qquad\text{if integer}\quad k< 0.
\end{cases}
\end{equation*}
To prove the first identity we first observe that
\begin{equation}
(-1)^{k-1}\binom{mn}{n-k}=G(n,k+1)-G(n,k)
\label{eq22}
\end{equation}
for positive integers $n,k$ with
\begin{equation*}
G(n,k)=(-1)^k\binom{mn-1}{n-k}.
\end{equation*}
Then summing both sides of equation (\ref{eq22}) over $k$ from $l+1$ to $n$ we obtain
\begin{equation*}
\sum_{k=l+1}^n\!(-1)^{k-1}\binom{mn}{n-k}=G(n,n+1)-G(n,l+1)=-G(n,l+1)=
(-1)^l\binom{mn-1}{n-l-1}.
\end{equation*}
Similarly, for the proof of the second identity we have
\begin{equation} \label{eq23}
\frac{2k\binom{n}{k}}{\binom{n+k}{k}}=G(n,k+1)-G(n,k)
\end{equation}
for positive integers $n,k$ with
\begin{equation*}
G(n,k)=-\frac{(n+k)\binom{n}{k}}{\binom{n+k}{k}}.
\end{equation*}
Summing both sides of equation (\ref{eq23}) over $k$ from $l+1$ to $n$ we easily obtain the result.

To prove  identity (\ref{eq3.2.1d}) we set
\begin{equation*}
f(m,k):=\frac{\binom{m}{k}}{k\binom{m+k}{k}} \qquad\text{and} \qquad
G(m,k):=-\frac{\binom{m+1}{k}}{2(m+1)\binom{m+k}{k}}, \quad m\ge 0, \,k\ge 1.
\end{equation*}
Then it is easy to see that
\begin{equation} \label{eq25}
f(m+1,k)-f(m,k)=G(m,k+1)-G(m,k).
\end{equation}
Summing both sides of equation (\ref{eq25}) over $m$ from $0$ to $n-1$ we have
\begin{equation*}
f(n,k)=\sum_{m=0}^{n-1}(G(m,k+1)-G(m,k)).
\end{equation*}
Now summing the above equation once again over $k$ from $1$ to $n$ we obtain
\begin{equation*}
\begin{split}
\sum_{k=1}^nf(n,k)=\sum_{m=0}^{n-1}\sum_{k=1}^n&(G(m,k+1)-G(m,k))=
\sum_{m=0}^{n-1}(G(m,n+1)-G(m,1)) \\
&=-\sum_{m=0}^{n-1}G(m,1)=\frac{1}{2}\sum_{m=0}^{n-1}\frac{1}{m+1},
\end{split}
\end{equation*}
as required.

For proving identity (\ref{eq3.2.1c}), it is easy to see  that
\begin{equation} \label{eq24}
\frac{l^2\binom{k}{l}}{k^2\binom{k+l}{l}}=G(k+1,l)-G(k,l)
\end{equation}
for positive integers $k,l$ with
\begin{equation*}
G(k,l)=\frac{(k^2-l^2)\binom{k}{l}}{k^2\binom{k+l}{l}}.
\end{equation*}
Then summing both sides of equation (\ref{eq24}) over $k$ from $l$ to $n$ we get
\begin{equation*}
l^2\sum_{k=l}^n\frac{\binom{k}{l}}{k^2\binom{k+l}{l}}=G(n+1,l)-G(l,l)=G(n+1,l)
=\frac{\binom{n}{l}}{\binom{n+l}{l}}.
\end{equation*}

To prove (\ref{eq3.2.2}) we note that for integers $n\ge 2,$ $k\ge 0,$
\begin{equation} \label{eq3.2.3}
\frac{(-1)^{k}k^2\binom{n}{k}}{\binom{n+k}{k}}=G(n,k+1)-G(n,k),
\end{equation}
with
\begin{equation*}
G(n,k)=\frac{(-1)^{k-1}k(k-1)(n+k)\binom{n}{k}}{2(n-1)\binom{n+k}{k}}.
\end{equation*}
Then summing both sides of equation (\ref{eq3.2.3}) over $k$ from $1$ to $n$ we obtain
\begin{equation*}
\sum_{k=1}^n\frac{(-1)^{k}k^2\binom{n}{k}}{\binom{n+k}{k}}=G(n,n+1)-G(n,1)=0,
\end{equation*}
and the lemma is proved.
\end{proof}

\begin{lemma} \label{l3.2.2}
Let $k, m, n$ be positive integers,
$
A_{n,k}^{(m)}=(-1)^{k}\binom{mn}{n-k}c_n^{(m)}
$
where $c_n^{(m)}$ is an arbitrary sequence  independent of $k,$ and $a$ be a non-negative integer.
Then for each   $c\in {\mathbb N},$
\begin{align}\label{id1}
\frac{1}{n^c}\sum_{k=1}^n \frac{H_{k-1}({\bf b})A_{n,k}^{(m)}}{k^a}
=\sum_{k=1}^n \frac{H_{k-1}({\bf b})A_{n,k}^{(m)}}{k^{a+c}}
+\underset{j\ge 0, s_1>a}{\sum_{j+|{\bf s}|=a+c}}
 m^{l({\bf s})}
\sum_{k=1}^n\frac{H_{k-1}({\bf b},{\bf s})A_{n,k}^{(m)}}{k^{j}}
\end{align}
where ${\bf s}=(s_1,s_2,\dots,s_r)\in \mathbb{N}^r$ for $r\geq 0$, and $|{\bf s}|=\sum_{i=1}^r s_i$, $l({\bf s})=r$.
\end{lemma}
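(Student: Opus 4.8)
The plan is to prove the identity by induction on $c$, the whole argument being driven by a single telescoping relation for the weights $A_{n,k}^{(m)}$ extracted from \eqref{eq3.2.1}. First I would put \eqref{eq3.2.1} into a multiplicative form: using $\binom{mn}{n-l}=\frac{mn}{n-l}\binom{mn-1}{n-l-1}$ to rewrite the right-hand side of \eqref{eq3.2.1} as $\frac{n-l}{mn}(-1)^l\binom{mn}{n-l}$, and then multiplying through by the common factor $c_n^{(m)}$, one obtains
\begin{equation*}
\sum_{j=k+1}^n A_{n,j}^{(m)}=-\frac{n-k}{mn}\,A_{n,k}^{(m)},\qquad 0\le k\le n.\tag{$\star$}
\end{equation*}
This is exactly the tool needed to trade a factor $1/n$ for a factor $1/k$ at the cost of appending one new index.

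The engine of the induction is the following one-step identity, valid for an arbitrary index vector ${\bf t}$ and any non-negative integer $a$:
\begin{equation*}
\frac1n\sum_{k=1}^n\frac{H_{k-1}({\bf t})A_{n,k}^{(m)}}{k^a}
=\sum_{k=1}^n\frac{H_{k-1}({\bf t})A_{n,k}^{(m)}}{k^{a+1}}
+m\sum_{k=1}^n H_{k-1}({\bf t},a+1)A_{n,k}^{(m)}.\tag{$\dagger$}
\end{equation*}
To prove $(\dagger)$ I would begin with $\frac{1}{nk^a}-\frac{1}{k^{a+1}}=-\frac{n-k}{nk^{a+1}}$ and invoke $(\star)$ with $l=k$ to replace $-\frac{n-k}{n}A_{n,k}^{(m)}$ by $m\sum_{j=k+1}^n A_{n,j}^{(m)}$. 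Multiplying by $H_{k-1}({\bf t})/k^{a+1}$, summing over $k$, and interchanging the order of summation over $1\le k<j\le n$, the inner sum $\sum_{k=1}^{j-1}H_{k-1}({\bf t})/k^{a+1}$ collapses, by the very definition of a multiple harmonic sum (appending $k$ as a new largest index carrying exponent $a+1$), to $H_{j-1}({\bf t},a+1)$; this gives $(\dagger)$.

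Taking ${\bf t}={\bf b}$ in $(\dagger)$ settles the case $c=1$, the sum on the right reducing to the single vector ${\bf s}=(a+1)$ since $s_1>a$ together with $|{\bf s}|=a+1$ forces this choice with $j=0$. For the inductive step I would read the first term on the right-hand side as the empty-vector ($r=0$) contribution, so that the entire right-hand side for the parameter $c$ becomes one sum over all ${\bf s}$ (empty allowed, the constraint $s_1>a$ being vacuous then) with $j+|{\bf s}|=a+c$. Multiplying the $c$-identity by $1/n$ and applying $(\dagger)$ to each summand with inner vector $({\bf b},{\bf s})$ and exponent $j$, every term splits in two: a ``same ${\bf s}$'' term of exponent $j+1$, and an ``append'' term carrying the longer vector $({\bf s},j+1)$, exponent $0$, and one extra factor $m$. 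Because $l(({\bf s},j+1))=l({\bf s})+1$, the prefactor becomes $m^{l({\bf s}')}$ in both branches, and the new data obey $j'+|{\bf s}'|=a+c+1$ with $s_1'>a$ preserved.

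The only real obstacle is the bookkeeping that these two output families reassemble, with neither collision nor omission, into the single sum claimed for $c+1$. I would check this through the evident bijection: a target $({\bf s}',j')$ with $j'\ge 1$ arises uniquely from $({\bf s}',j'-1)$ through the ``same ${\bf s}$'' branch, while a target with $j'=0$ (so that ${\bf s}'$ is nonempty with $|{\bf s}'|=a+c+1$) arises uniquely by stripping the last component of ${\bf s}'$ and setting $j$ to that component minus one, through the ``append'' branch; the condition $s_1'>a$ transfers in both directions since the first component is never touched. This matches the coefficients $m^{l({\bf s}')}$ and the index constraints exactly, closing the induction.
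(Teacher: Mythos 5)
Your proposal is correct and follows essentially the same route as the paper: both arguments rest on the telescoped consequence of \eqref{eq3.2.1}, namely $\sum_{k=l+1}^n A_{n,k}^{(m)}=-\frac{n-l}{mn}A_{n,l}^{(m)}$ (the paper's first display in its proof), and both prove \eqref{id1} by induction on $c$ with the one-step identity $(\dagger)$ serving as the base case $c=1$. The only difference is cosmetic: in the inductive step the paper writes $n^{-c}=n^{-(c-1)}\cdot n^{-1}$, applies the $c=1$ case first and then the induction hypothesis (with $a\to a+1$ and ${\bf b}\to({\bf b},a+1)$), splitting the target sum by $s_1=a+1$ versus $s_1>a+1$, whereas you apply the induction hypothesis first and then $(\dagger)$ termwise, splitting by $j'=0$ versus $j'\ge 1$ — the same bookkeeping in the opposite order.
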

\begin{proof} We first note that by (\ref{eq3.2.1}),
\begin{align}\label{c1}
\frac{m}{l}\sum_{k=l+1}^n A_{n,k}^{(m)}=\left(\frac{1}{n}-\frac{1}{l}\right)A_{n,l}^{(m)}.
\end{align}
We prove \eqref{id1} by induction on $c$. By \eqref{c1}, we have that
\begin{align*}
m\sum_{k=1}^n{H_{k-1}({\bf b},a+1)A_{n,k}^{(m)}}
&=m\sum_{l=1}^n \frac{H_{l-1}({\bf b})}{l^{a+1}}\sum_{k=l+1}^n A_{n,k}^{(m)}\\
&=\sum_{l=1}^n\frac{H_{l-1}({\bf b}) A_{n,l}^{(m)}}{l^{a}}\,\left(\frac{1}{n}-\frac{1}{l}\right)\\
&=\frac{1}{n}\sum_{l=1}^n\frac{H_{l-1}({\bf b}) A_{n,l}^{(m)}}{l^{a}}-\sum_{l=1}^n\frac{H_{l-1}({\bf b}) A_{n,l}^{(m)}}{l^{a+1}}
\end{align*}
which is \eqref{id1} for $c=1$.
Now let $c>1$. Then by  induction, we have
\begin{align*}
\frac{1}{n^c}\sum_{k=1}^n &\frac{H_{k-1}({\bf b})A_{n,k}^{(m)}}{k^a}
=\frac{1}{n^{c-1}}\left(\frac{1}{n}\sum_{k=1}^n \frac{H_{k-1}({\bf b})A_{n,k}^{(m)}}{k^a}\right)\\
&\quad=\frac{1}{n^{c-1}}\sum_{k=1}^n \frac{H_{k-1}({\bf b})A_{n,k}^{(m)}}{k^{a+1}}+\frac{m}{n^{c-1}}\sum_{k=1}^n{H_{k-1}({\bf b},a+1)A_{n,k}^{(m)}}\\[3pt]
&\quad=\sum_{k=1}^n \frac{H_{k-1}({\bf b})A_{n,k}^{(m)}}{k^{a+c}}+
\underset{j\ge 0, s_1>a+1}{\sum_{j+|{\bf s}|=a+c}}
 m^{l({\bf s})}
\sum_{k=1}^n\frac{H_{k-1}({\bf b},{\bf s})A_{n,k}^{(m)}}{k^{j}}\\[3pt]
&\quad\,+m\sum_{k=1}^n \frac{H_{k-1}({\bf b},a+1)A_{n,k}^{(m)}}{k^{c-1}}
+m\!\!\underset{j\ge 0, s_1>0}{\sum_{j+|{\bf s}|=c-1}}
 m^{l({\bf s})}
\sum_{k=1}^n\frac{H_{k-1}({\bf b},a+1,{\bf s})A_{n,k}^{(m)}}{k^{j}}\\
&\quad=\sum_{k=1}^n \frac{H_{k-1}({\bf b})A_{n,k}^{(m)}}{k^{a+c}}+
\underset{j\ge 0, s_1>a+1}{\sum_{j+|{\bf s}|=a+c}}
 m^{l({\bf s})}
\sum_{k=1}^n\frac{H_{k-1}({\bf b},{\bf s})A_{n,k}^{(m)}}{k^{j}}\\
&\quad\,+\underset{j\ge 0, s_1=a+1}{\sum_{j+|{\bf s}|=a+c}}
 m^{l({\bf s})}
\sum_{k=1}^n\frac{H_{k-1}({\bf b},{\bf s})A_{n,k}^{(m)}}{k^{j}}.
\end{align*}
\end{proof}

\begin{theorem} \label{T3.2.1}
For a positive integer $n$ and non-negative integers $a, b,$ and $c\geq 2$,
\begin{equation}\label{S2ac2b}
\begin{split}
S_n(\{2\}^a,c,\{2\}^b)&=
2\sum_{k=1}^n \frac{(-1)^{k-1}\binom{n}{k}}{k^{2a+2b+c}\binom{n+k}{k}} \\[3pt]
&+4\!\!\!\!\underset{i\geq 1,j\geq 2, |{\bf s}|\ge 0}{\sum_{i+j+|{\bf s}|=c}}
2^{l({\bf s})}
\sum_{k=1}^n\frac{H_{k-1}(2a+i,{\bf s})(-1)^{k-1}\binom{n}{k}}{k^{2b+j}\binom{n+k}{k}},
\end{split}
\end{equation}
where ${\bf s}=(s_1,s_2,\dots,s_r)\in \mathbb{N}^r$ for $r\geq 0$, and $|{\bf s}|=\sum_{i=1}^r s_i$, $l({\bf s})=r$.
\end{theorem}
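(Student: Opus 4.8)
The plan is to reduce everything to the binomial kernel $A_{n,k}^{(2)}=(-1)^{k-1}\binom{n}{k}/\binom{n+k}{k}$, which is exactly $A_{n,k}^{(m)}$ from Lemma \ref{l3.2.2} with $m=2$ and $c_n^{(2)}=-1/\binom{2n}{n}$, because $\binom{2n}{n-k}=\binom{2n}{n}\binom{n}{k}/\binom{n+k}{k}$. Abbreviating $F_n(\nu;{\bf b}):=\sum_{k=1}^n H_{k-1}({\bf b})A_{n,k}^{(2)}/k^\nu$, the claim \eqref{S2ac2b} reads $S_n(\{2\}^a,c,\{2\}^b)=2F_n(2a+2b+c;\emptyset)+4\sum 2^{l({\bf s})}F_n(2b+j;(2a+i,{\bf s}))$. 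I would prove this by building the index string from the outside in: first establish the all-$2$ case, then isolate the single exponent $c$ in the case $b=0$, and finally peel the trailing block $\{2\}^b$ by induction.

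Two elementary telescopings do most of the work, both drawn from Lemma \ref{l3.2.1}. First, summing \eqref{eq3.2.1c} against an arbitrary weight yields the lifting identity $\sum_{k=1}^n k^{-2}F_k(\nu;{\bf b})=F_n(\nu+2;{\bf b})$: after interchanging the order of summation the inner sum $\sum_{k=K}^n \binom{k}{K}/(k^2\binom{k+K}{K})$ collapses to $\binom{n}{K}/(K^2\binom{n+K}{K})$, which reassembles into $F_n(\nu+2;{\bf b})$. Second, the same mechanism, together with \eqref{eq3.2.1} at $l=0$ (which gives $\sum_{k=1}^n A_{n,k}^{(2)}=\tfrac12$), proves the base identity $S_n(\{2\}^p)=2F_n(2p;\emptyset)$ by induction on $p$ from $S_n(\{2\}^p)=\sum_{k=1}^n k^{-2}S_k(\{2\}^{p-1})$ and the lifting identity.

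The crux is the case $b=0$. Writing $S_n(\{2\}^a,c)=\sum_{k=1}^n k^{-c}S_k(\{2\}^a)=2\sum_{k=1}^n k^{-c}F_k(2a;\emptyset)$ and splitting $k^{-c}=k^{-2}\,k^{-(c-2)}$, I would apply Lemma \ref{l3.2.2} with its outer bound $n$ replaced by the running index $k$ (taking $m=2$, $C=c-2\ge1$, $\alpha=2a$, ${\bf b}=\emptyset$) to expand $k^{-(c-2)}F_k(2a;\emptyset)$ as $F_k(2a+c-2;\emptyset)+\sum_{j+|{\bf s}|=2a+c-2,\ s_1>2a}2^{l({\bf s})}F_k(j;{\bf s})$. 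Multiplying by $k^{-2}$, summing over $k$, and invoking the lifting identity turns every $F_k$ into an $F_n$ with exponent raised by $2$. Reindexing each nonempty ${\bf s}$ as $(2a+i,{\bf s}')$ with $i=s_1-2a\ge1$ converts $j+|{\bf s}|=2a+c-2$ into $i+(j+2)+|{\bf s}'|=c$ and splits one factor of $2$ off $2^{l({\bf s})}=2\cdot2^{l({\bf s}')}$, producing exactly the stated sum with $j'=j+2\ge2$ and overall coefficient $4$; the degenerate $c=2$ is the all-$2$ identity with an empty correction sum.

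Finally I would induct on $b$: from $S_n(\{2\}^a,c,\{2\}^b)=\sum_{k=1}^n k^{-2}S_k(\{2\}^a,c,\{2\}^{b-1})$, applying the lifting identity term by term sends each $F_k(\mu;{\bf b})$ in the inductive formula to $F_n(\mu+2;{\bf b})$, i.e.\ replaces $2b-2$ by $2b$ throughout, which is precisely the $b$-dependence in \eqref{S2ac2b}. I expect the main obstacle to be organizational rather than conceptual: in the $b=0$ step one must apply Lemma \ref{l3.2.2} at the correct inner level, track which power of $2$ is absorbed when the leading entry $2a+i$ is peeled off the string ${\bf s}$, and check that the two index ranges — mine with $s_1>2a,\ j\ge0$ and the theorem's with $i\ge1,\ j\ge2$ — correspond bijectively; aligning these bookkeeping details, including the cases ${\bf s}'=\emptyset$ and $c=2$, is where care is needed.
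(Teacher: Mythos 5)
Your proposal is correct, and it arrives at \eqref{S2ac2b} by a genuinely different route than the paper. The paper fixes $(a,b,c)$ and inducts on $n$: it peels off the largest summation index via $S_n(\{2\}^a,c,\{2\}^b)=\sum_{l=0}^b n^{-2(b-l)}S_{n-1}(\{2\}^a,c,\{2\}^l)+n^{-2b-c}S_n(\{2\}^a)$, collapses the resulting geometric sums through the binomial-ratio identity $\frac{\binom{n-1}{k}}{\binom{n-1+k}{k}}\sum_{l=0}^b\frac{n^{2l}}{k^{2l}}=\bigl(\frac{n^{2b}}{k^{2b}}-\frac{k^2}{n^2}\bigr)\frac{\binom{n}{k}}{\binom{n+k}{k}}$, and absorbs the residual $n^{-(c-2)}$-term by Lemma \ref{l3.2.2} applied at the outer level $n$, with the all-twos base case \eqref{S2m} settled by a separate $n$-induction resting on \eqref{eq3.2.2}. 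You instead fix $n$ and induct on the shape of the composition, powered by the lifting identity $\sum_{k=1}^n k^{-2}F_k(\nu;{\bf b})=F_n(\nu+2;{\bf b})$, which is a correct consequence of \eqref{eq3.2.1c} after interchanging summations (legitimate because the weight $H_{K-1}({\bf b})$ depends only on the inner index) and is exactly the mechanism the paper reserves for proving \eqref{eq34} in Theorem \ref{T3.2.2}; your base evaluation $2F_n(0;\emptyset)=1$ via \eqref{eq3.2.1} and $\binom{2n}{n-k}=\binom{2n}{n}\binom{n}{k}/\binom{n+k}{k}$ is right; your $b=0$ crux applies Lemma \ref{l3.2.2} at the running index $k$ under the star recursion rather than at $n$; and your reindexing ${\bf s}=(2a+i,{\bf s}')$, $j\mapsto j+2$, with $2^{l({\bf s})}=2\cdot 2^{l({\bf s}')}$ producing the overall factor $4$, is the same bijection the paper records in the final display of its proof. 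The bookkeeping worry you flag is in fact vacuous: every ${\bf s}$ in the correction sum of \eqref{id1} is automatically nonempty (its proof always prepends an entry of size at least $2a+1$), so the correspondence between $\{j\ge 0,\ s_1>2a\}$ and $\{i\ge 1,\ j\ge 2\}$ is total, including ${\bf s}'=\emptyset$, and $c=2$ degenerates to \eqref{S2m} as you say. What each approach buys: the paper's single $n$-induction treats all of \eqref{S2ac2b} uniformly in one pass but pays for it with the geometric-sum algebra and the auxiliary identity \eqref{eq3.2.2}; your three-stage scheme (all-twos case, $b=0$, then $b$-induction) avoids \eqref{eq3.2.2} and the ratio manipulations entirely, makes the structural origin of the correction sum transparent (Lemma \ref{l3.2.2} acting under one application of the lifting operator), and unifies Theorems \ref{T3.2.1} and \ref{T3.2.2} under the same telescoping engine \eqref{eq3.2.1c}, at the cost of a slightly longer chain of inductions.
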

\begin{proof}
The proof is by induction on $n.$ For $n=1$ we have
$S_1(\{2\}^a,c,\{2\}^b)=1,$
and the formula is true. For $n>1$ we proceed as follows.
If $c=2$ then we should prove that
\begin{equation}\label{S2m}
S_{n}(\{2\}^m)=2\sum_{k=1}^n\frac{(-1)^{k-1}\binom{n}{k}}{k^{2m}\binom{n+k}{k}}
\end{equation}
where $m=a+b+1$. First note that
\begin{equation*}
S_n(\{2\}^m)=\sum_{1\le k_1\le k_2\le \cdots\le k_m\le n}\frac{1}{k_1^2k_2^2\cdots k_m^2}
=\sum_{l=0}^m\frac{1}{n^{2(m-l)}}S_{n-1}(\{2\}^l).
\end{equation*}
Then by the induction hypothesis, we have that
\begin{align*}
S_n(\{2\}^m)&=2\sum_{l=0}^n\frac{1}{n^{2(m-l)}}\sum_{k=1}^{n-1}\frac{(-1)^{k-1}\binom{n-1}{k}}{k^{2l}\binom{n+k-1}{k}}
=\frac{2}{n^{2m}}\sum_{k=1}^{n-1}\frac{(-1)^{k-1}\binom{n-1}{k}}{\binom{n+k-1}{k}}\sum_{l=0}^m\frac{n^{2l}}{k^{2l}} \\[3pt]
&=\frac{2}{n^{2m+2}}\sum_{k=1}^n\frac{(-1)^{k-1}\binom{n}{k}(n^{2m+2}-k^{2m+2})}{k^{2m}\binom{n+k}{k}},\\
&=2\sum_{k=1}^n\frac{(-1)^{k-1}\binom{n}{k}}{k^{2m}\binom{n+k}{k}}-\frac{2}{n^{2m+2}}\sum_{k=1}^n\frac{(-1)^{k-1}k^2\binom{n}{k}}{\binom{n+k}{k}}
\end{align*}
and formula \eqref{S2m} easily follows by the equation \eqref{eq3.2.2}.

To prove \eqref{S2ac2b} for $c>2$ we note that for $n>1,$
\begin{equation*}
\begin{split}
S_n(\{2\}^a,c,\{2\}^b)&=\sum_{1\le k_1\le\cdots\le k_a\le k_{a+1}\le k_{a+2}\le\cdots\le k_{a+b+1}\le n}
\frac{1}{k_1^2\cdots k_a^2k_{a+1}^ck_{a+2}^2\cdots k_{a+b+1}^2} \\
&=\sum_{l=0}^b\frac{1}{n^{2(b-l)}} S_{n-1}(\{2\}^a,c,\{2\}^l)+\frac{1}{n^{2b+c}}S_n(\{2\}^a).
\end{split}
\end{equation*}
Then by the induction hypothesis  and formula (\ref{S2m}), we have
\begin{align*}
S_n(\{2\}^a,c,\{2\}^b)
&=2\sum_{l=0}^b\frac{1}{n^{2(b-l)}}\sum_{k=1}^{n-1}\frac{(-1)^{k-1}\binom{n-1}{k}}{k^{2(a+l)+c}\binom{n+k-1}{k}} \\
&+4\sum_{l=0}^b\frac{1}{n^{2(b-l)}}
\underset{i\geq 1,j\geq 2, |{\bf s}|\ge 0}{\sum_{i+j+|{\bf s}|=c}}
2^{l({\bf s})}
\sum_{k=1}^{n-1}\frac{H_{k-1}(2a+i,{\bf s})(-1)^{k-1}\binom{n-1}{k}}{k^{2l+j}\binom{n-1+k}{k}}
\\
&+\frac{2}{n^{2b+c}}\sum_{k=1}^{n}\frac{(-1)^{k-1}\binom{n}{k}}{k^{2a}\binom{n+k}{k}}.
\end{align*}
Changing the order of summation and summing the inner sum
\begin{equation*}
\frac{\binom{n-1}{k}}{\binom{n-1+k}{k}}\sum_{l=0}^b\frac{n^{2l}}{k^{2l}}
=\frac{n^{2b+2}-k^{2b+2}}{(n^2-k^2)k^{2b}}\frac{\binom{n-1}{k}}{\binom{n-1+k}{k}}
=\left(\frac{n^{2b}}{k^{2b}}-\frac{k^{2}}{n^{2}}\right)\frac{\binom{n}{k}}{\binom{n+k}{k}},
\end{equation*}
we obtain
\begin{align*}
S_n(\{2\}^a,c,\{2\}^b)&=2\sum_{k=1}^n \frac{(-1)^{k-1}\binom{n}{k}}{k^{2a+2b+c}\binom{n+k}{k}} \\
&+\;4\!\!\!\underset{i\geq 1,j\geq 2, |{\bf s}|\ge 0}{\sum_{i+j+|{\bf s}|=c}}
2^{l({\bf s})}
\sum_{k=1}^n\frac{H_{k-1}(2a+i,{\bf s})(-1)^{k-1}\binom{n}{k}}{k^{2b+j}\binom{n+k}{k}}\\
&+\frac{2}{n^{2b+c}}\sum_{k=1}^{n}\frac{(-1)^{k-1}\binom{n}{k}}{k^{2a}\binom{n+k}{k}}
-\frac{2}{n^{2b+2}}\sum_{k=1}^n \frac{(-1)^{k-1}\binom{n}{k}}{k^{2a+c-2}\binom{n+k}{k}}\\
&-\frac{4}{n^{2b+2}}\underset{i\geq 1,j\geq 2, |{\bf s}|\ge 0}{\sum_{i+j+|{\bf s}|=c}}
2^{l({\bf s})}
\sum_{k=1}^n\frac{H_{k-1}(2a+i,{\bf s})(-1)^{k-1}\binom{n}{k}}{k^{j-2}\binom{n+k}{k}}.
\end{align*}
The final result follows as soon as we show that
\begin{align*}
\frac{1}{n^{c-2}}\sum_{k=1}^{n}\frac{A_{n,k}^{(2)}}{k^{2a}}
&=\sum_{k=1}^n \frac{A_{n,k}^{(2)}}{k^{2a+c-2}}
+2\underset{i\geq 1,j\geq 2, |{\bf s}|\ge 0}{\sum_{i+j+|{\bf s}|=c}}
2^{l({\bf s})}
\sum_{k=1}^n\frac{H_{k-1}(2a+i,{\bf s})A_{n,k}^{(2)}}{k^{j-2}}
\end{align*}
where $A_{n,k}^{(2)}=(-1)^{k-1}\binom{n}{k}/\binom{n+k}{k}$. This identity holds by Lemma \ref{l3.2.2} because
\begin{equation*}
2\!\!\!\underset{i\geq 1,j\geq 2, |{\bf s}|\ge 0}{\sum_{i+j+|{\bf s}|=c}}
2^{l({\bf s})}
\sum_{k=1}^n\frac{H_{k-1}(2a+i,{\bf s})A_{n,k}^{(2)}}{k^{j-2}}
=\underset{j\ge 0, s_1>2a}{\sum_{j+|{\bf s}|=2a+c-2}}
2^{l({\bf s})}
\sum_{k=1}^n\frac{H_{k-1}({\bf s})A_{n,k}^{(2)}}{k^{j}}.
\end{equation*}
\end{proof}

\begin{corollary}
For a positive integer $n$ and non-negative integers $a, b,$
\begin{align}
S_n(\{2\}^a)&=2\sum_{k=1}^n\frac{(-1)^{k-1}\binom{n}{k}}{k^{2a}\binom{n+k}{k}},\nonumber\\[3pt]
S_n(\{2\}^a,3,\{2\}^b)&=2\sum_{k=1}^n\frac{(-1)^{k-1}\binom{n}{k}}{k^{2(a+b)+3}\binom{n+k}{k}}+
4\sum_{k=1}^n\frac{H_{k-1}(2a+1)(-1)^{k-1}\binom{n}{k}}{k^{2b+2}\binom{n+k}{k}}.
\label{S2a32b}
\end{align}
\end{corollary}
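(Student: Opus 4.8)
The plan is to deduce both identities directly from Theorem~\ref{T3.2.1} together with the auxiliary formula \eqref{S2m} established inside its proof, so that the entire argument amounts to bookkeeping of the index set of the inner double sum. No fresh machinery is needed; the corollary is an extraction of two special cases.

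First I would dispose of the identity for $S_n(\{2\}^a)$. For $a\ge 1$ this is nothing but \eqref{S2m} with $m=a$, which was already proved by induction on $n$ (using the vanishing identity \eqref{eq3.2.2}) in the course of proving Theorem~\ref{T3.2.1}, so there is nothing further to do. The only point deserving a separate remark is the degenerate value $a=0$, where $S_n(\{2\}^0)=S_n(\emptyset)=1$ by convention; here one checks that the very same computation behind \eqref{S2m} still goes through for $m=0$ (the geometric factor $\sum_{l=0}^m n^{2l}/k^{2l}$ reducing to a single term), or else verifies the elementary identity $2\sum_{k=1}^n(-1)^{k-1}\binom{n}{k}/\binom{n+k}{k}=1$ by a one-line induction on $n$.

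For the second identity I would simply set $c=3$ in \eqref{S2ac2b}. The crux is to determine which triples survive in the double sum: one runs over $(i,j,{\bf s})$ with $i\ge 1$, $j\ge 2$, $|{\bf s}|\ge 0$ and $i+j+|{\bf s}|=3$. Since $i\ge 1$ and $j\ge 2$ already force $i+j\ge 3$, no weight is left over for ${\bf s}$, so the only admissible choice is $i=1$, $j=2$ and ${\bf s}=\emptyset$ (the empty sequence, $r=0$). For this unique term $l({\bf s})=0$, whence $2^{l({\bf s})}=1$, and $H_{k-1}(2a+i,{\bf s})=H_{k-1}(2a+1)$. Feeding these values back into \eqref{S2ac2b} collapses the second sum to a single series and reproduces exactly \eqref{S2a32b}.

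Since everything reduces to substitution into a result already in hand, there is no genuine obstacle; the only place demanding a moment's care is confirming that the combinatorial constraints on $(i,j,{\bf s})$ really isolate a single summand when $c=3$ (and, in the same spirit, leave the second sum empty when $c=2$, which is precisely what makes the $S_n(\{2\}^m)$ identity fall out). I would therefore present the corollary as an immediate consequence of Theorem~\ref{T3.2.1}, spelling out only these index computations.
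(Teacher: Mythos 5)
Your proposal is correct and matches the paper's (implicit) argument exactly: the corollary is just Theorem~\ref{T3.2.1} specialized to $c=3$, where the constraints $i\ge 1$, $j\ge 2$, $i+j+|{\bf s}|=3$ isolate the single term $(i,j,{\bf s})=(1,2,\emptyset)$, together with formula \eqref{S2m} (the $c=2$ case) for $S_n(\{2\}^a)$. Your extra care with the degenerate case $a=0$, where $S_n(\emptyset)=1$ and \eqref{S2m} still holds by the same computation, is a sound touch the paper leaves unremarked.
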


\begin{theorem} \label{t3.2.2}
For a positive integer $n$ and non-negative integers $a, b, c$,
\begin{equation}\label{S1ac1b}
S_n(\{1\}^a,c,\{1\}^b)=
\sum_{k=1}^n \frac{(-1)^{k-1}\binom{n}{k}}{k^{a+b+c}}
+\underset{i\geq 1,j\geq 1, |{\bf s}|\ge 0}{\sum_{i+j+|{\bf s}|=c}}\,
\sum_{k=1}^n\frac{H_{k-1}(a+i,{\bf s})(-1)^{k-1}\binom{n}{k}}{k^{b+j}}.
\end{equation}
\end{theorem}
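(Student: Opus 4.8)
The plan is to mirror the proof of Theorem \ref{T3.2.1}, specializing the machinery of Lemma \ref{l3.2.2} to weight-one blocks, i.e. to the case $m=1$. Concretely, I set
\[
A_{n,k}^{(1)}=(-1)^{k-1}\binom{n}{k},
\]
which is the sequence $A_{n,k}^{(m)}=(-1)^k\binom{mn}{n-k}c_n^{(m)}$ of Lemma \ref{l3.2.2} for $m=1$ and $c_n^{(1)}=-1$ (note $\binom{n}{n-k}=\binom{n}{k}$), so that Lemma \ref{l3.2.2} is available with $m=1$. The whole argument runs by induction on $n$. For $n=1$ both sides equal $1$: the first term on the right is $\binom{1}{1}/1=1$, while every summand of the double sum carries a factor $H_0(a+i,{\bf s})=0$, since $(a+i,{\bf s})$ is a nonempty sequence.

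Before treating general $c$, I first record the base identity corresponding to $c=1$. Since $(\{1\}^a,1,\{1\}^b)=\{1\}^{a+b+1}$, this amounts to
\[
S_n(\{1\}^m)=\sum_{k=1}^n\frac{(-1)^{k-1}\binom{n}{k}}{k^{m}},
\]
the weight-one analogue of \eqref{S2m}. I would prove it by the same induction on $n$, starting from the peeling recursion $S_n(\{1\}^m)=\sum_{l=0}^m n^{-(m-l)}S_{n-1}(\{1\}^l)$, inserting the induction hypothesis, interchanging the order of summation, and collapsing the geometric sum $\sum_{l=0}^m(n/k)^l$ with the help of $\binom{n-1}{k}/(n-k)=\binom{n}{k}/n$. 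The leftover boundary term is controlled by the elementary alternating identity $\sum_{k=0}^n(-1)^k k\binom{n}{k}=0$ (valid for $n\ge2$), which plays here the role that \eqref{eq3.2.2} plays in \eqref{S2m}; it precisely restores the missing $k=n$ contribution.

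For $c\ge2$ I would use the peeling recursion
\[
S_n(\{1\}^a,c,\{1\}^b)=\sum_{l=0}^b\frac{1}{n^{b-l}}\,S_{n-1}(\{1\}^a,c,\{1\}^l)+\frac{1}{n^{b+c}}\,S_n(\{1\}^a),
\]
obtained by splitting according to whether the largest summation index is $<n$ or $=n$. Substituting the induction hypothesis into $S_{n-1}(\{1\}^a,c,\{1\}^l)$ and the base identity into $S_n(\{1\}^a)$, interchanging summations and again collapsing $\sum_{l=0}^b(n/k)^l$ by the same simplification, one obtains the two desired main terms, now ranging over $k=1,\dots,n$ since the collapsed factor $\binom{n}{k}\bigl(k^{-b}-k\,n^{-(b+1)}\bigr)$ vanishes at $k=n$, together with three correction terms, all carrying a common factor $n^{-(b+1)}$.

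The crux --- and the step I expect to be the main obstacle --- is to show that these correction terms cancel. After factoring out $n^{-(b+1)}$ they read
\[
\frac{1}{n^{c-1}}\sum_{k=1}^n\frac{A_{n,k}^{(1)}}{k^a}-\sum_{k=1}^n\frac{A_{n,k}^{(1)}}{k^{a+c-1}}-\underset{i\ge1,\,j\ge1}{\sum_{i+j+|{\bf s}|=c}}\;\sum_{k=1}^n\frac{H_{k-1}(a+i,{\bf s})A_{n,k}^{(1)}}{k^{j-1}},
\]
and this bracket vanishes by Lemma \ref{l3.2.2}, applied with ${\bf b}=\emptyset$, $m=1$, and the lemma's parameter equal to $c-1$ (which is $\ge1$ precisely because $c\ge2$). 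The only genuine bookkeeping is to match the index sets: the output of Lemma \ref{l3.2.2} is indexed by sequences ${\bf s}'$ with $s_1'>a$ and $j'+|{\bf s}'|=a+c-1$, $j'\ge0$, and the substitution $s_1'=a+i$, ${\bf s}'=(a+i,{\bf s})$, $j'=j-1$ is a bijection onto $\{i\ge1,\,j\ge1,\,i+j+|{\bf s}|=c\}$ under which $1^{l({\bf s}')}=1^{l({\bf s})}$. This identifies the two forms, so all corrections drop out and exactly the right-hand side survives, completing the induction.
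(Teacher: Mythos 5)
Your proposal is correct and follows essentially the same route as the paper: induction on $n$ for $c\ge 2$ via the peeling recursion, the geometric collapse $\binom{n-1}{k}\sum_{l=0}^b (n/k)^l=\bigl(\frac{n^b}{k^b}-\frac{k}{n}\bigr)\binom{n}{k}$, and then Lemma \ref{l3.2.2} with $A_{n,k}^{(1)}=(-1)^{k-1}\binom{n}{k}$ together with exactly the index bijection $({\bf s}',j')=\bigl((a+i,{\bf s}),\,j-1\bigr)$ that the paper records as its final displayed equality. The only deviation is the base case $c=1$, which the paper dispatches by citing Dilcher's known identity (or the polynomial identity of \cite[Lemma 5.5]{TZ:10}) while you prove it self-containedly by the same induction, with $\sum_{k=0}^n(-1)^k k\binom{n}{k}=0$ playing the role of \eqref{eq3.2.2} — a correct and harmless substitution.
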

\begin{proof}
Note that the case $c=1$ is well known (see \cite[Corollary 3]{Di:95}). It also follows from the polynomial identity
\begin{equation*}
\sum_{1\le k_1\le\ldots\le k_m\le n}\frac{(1+x)^{k_1}-1}{k_1\cdots k_m}=\sum_{k=1}^n\binom{n}{k}\frac{x^{k}}{k^m}
\end{equation*}
which appeared in \cite[Lemma 5.5]{TZ:10}. For $c>1$ the identity can be proved by induction on $n.$
By a similar argument as in the proof of Theorem \ref{T3.2.1}, we have
\begin{equation*}
\begin{split}
S_n(\{1\}^a, c, \{1\}^b)&=\sum_{l=0}^b\frac{1}{n^{b-l}}S_{n-1}(\{1\}^a,c,\{1\}^l)+\frac{1}{n^{b+c}}S_n(\{1\}^a) \\
&=\frac{1}{n^{b+c}}\sum_{k=1}^n\frac{(-1)^{k-1}\binom{n}{k}}{k^a}+
\sum_{l=0}^b\frac{1}{n^{b-l}}\sum_{k=1}^{n-1}\frac{(-1)^{k-1}\binom{n-1}{k}}{k^{a+c+l}} \\
&+\sum_{l=0}^b\frac{1}{n^{b-l}}
\!\!\!\underset{i\ge 1,  j\ge 1, |{\bf s}|\ge 0}{\sum_{i+j+|{\bf s}|=c}}\!\sum_{k=1}^{n-1}\frac{H_{k-1}(a+i,{\bf s})(-1)^{k-1}\binom{n-1}{k}}{k^{l+j}}.
\end{split}
\end{equation*}
Changing the order of summation and summing the inner sum
\begin{equation*}
\binom{n-1}{k}\sum_{l=0}^b\frac{n^l}{k^l}=\left(\frac{n^{b}}{k^b}-\frac{k}{n}\right)\binom{n}{k}
\end{equation*}
we obtain
\begin{equation*}
\begin{split}
S_n(\{1\}^a,c,\{1\}^b)&=\sum_{k=1}^n\frac{(-1)^{k-1}\binom{n}{k}}{k^{a+b+c}}+\underset{i\ge 1, j\ge 1, |{\bf s}|\ge 0}{\sum_{i+j+|{\bf s}|=c}}\,
\sum_{k=1}^n\frac{H_{k-1}(a+i, {\bf s})(-1)^{k-1}\binom{n}{k}}{k^{b+j}} \\
&+\frac{1}{n^{b+c}}\sum_{k=1}^n\frac{(-1)^{k-1}\binom{n}{k}}{k^a}
-\frac{1}{n^{b+1}}\sum_{k=1}^n\frac{(-1)^{k-1}\binom{n}{k}}{k^{a+c-1}} \\
&-\frac{1}{n^{b+1}}\underset{i\ge 1, j\ge 1, |{\bf s}|\ge 0}{\sum_{i+j+|{\bf s}|=c}}\,\sum_{k=1}^n\frac{H_{k-1}(a+i,{\bf s})(-1)^{k-1}\binom{n}{k}}{k^{j-1}}.
\end{split}
\end{equation*}
Now the result easily follows from Lemma \ref{l3.2.2} with
$A_{n,k}^{(1)}=(-1)^{k-1}\binom{n}{k}$ and the equality
\begin{equation*}
\underset{i\ge 1, j\ge 1, |{\bf s}|\ge 0}{\sum_{i+j+|{\bf s}|=c}}\,\sum_{k=1}^n\frac{H_{k-1}(a+i,{\bf s})(-1)^{k-1}\binom{n}{k}}{k^{j-1}}
=\underset{j\ge 0, s_1>a}{\sum_{j+|{\bf s}|=a+c-1}}\,\sum_{k=1}^n\frac{H_{k-1}({\bf s})(-1)^{k-1}\binom{n}{k}}{k^j}.
\end{equation*}
\end{proof}

\begin{theorem} \label{T3.2.2}
Let $a,b$ be integers satisfying $a\ge 1,$ $b\ge 0.$
Then for any positive integer $n,$
\begin{align}\label{eq34}
S_n(1,\{2\}^b)&=2\sum_{k=1}^n\frac{\binom{n}{k}}{k^{2b+1}\binom{n+k}{k}},\\[3pt]
S_n(\{2\}^a,1,\{2\}^b)&=2\sum_{k=1}^n\frac{(-1)^{k-1}\binom{n}{k}}{k^{2(a+b)+1}\binom{n+k}{k}}-
4\sum_{k=1}^n\frac{H_{k-1}(-2a)\binom{n}{k}}{k^{2b+1}\binom{n+k}{k}}. \label{eq35}
\end{align}
\end{theorem}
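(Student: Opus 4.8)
The plan is to prove both identities by induction on $n$, following the same peeling strategy used in the proof of Theorem \ref{T3.2.1}, the only essential difference being that the ``middle'' index now equals $1$ rather than $c\ge 2$. The base case $n=1$ is immediate since both sides equal $1$ (note that $H_0(-2a)=0$). For $n>1$, recording how many of the trailing entries (the middle $1$ together with the final block of $2$'s) are forced to equal $n$ yields the recurrence
\begin{equation*}
S_n(\{2\}^a,1,\{2\}^b)=\sum_{l=0}^b\frac{1}{n^{2(b-l)}}S_{n-1}(\{2\}^a,1,\{2\}^l)+\frac{1}{n^{2b+1}}S_n(\{2\}^a),
\end{equation*}
and likewise for $S_n(1,\{2\}^b)$ with $S_n(\{2\}^a)$ replaced by $S_n(\emptyset)=1$. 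After substituting the inductive hypothesis and interchanging the order of summation, in each case the inner sum over $l$ collapses by the geometric-series evaluation
\begin{equation*}
\frac{\binom{n-1}{k}}{\binom{n-1+k}{k}}\sum_{l=0}^b\frac{1}{n^{2(b-l)}k^{2l}}=\left(\frac{1}{k^{2b}}-\frac{k^2}{n^{2b+2}}\right)\frac{\binom{n}{k}}{\binom{n+k}{k}},
\end{equation*}
exactly as in the proof of Theorem \ref{T3.2.1}.

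For \eqref{eq34} this immediately produces $2\sum_{k=1}^{n-1}\binom{n}{k}/(k^{2b+1}\binom{n+k}{k})$ together with correction terms, and completing the sum to $k=n$ reduces everything to the single identity $\sum_{k=1}^n k\binom{n}{k}/\binom{n+k}{k}=n/2$, which is precisely \eqref{eq3.2.1b} with $l=0$.

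For \eqref{eq35} the same manipulation, now using the established formula $S_n(\{2\}^a)=2\sum_{k=1}^n(-1)^{k-1}\binom{n}{k}/(k^{2a}\binom{n+k}{k})$ from \eqref{S2m}, leaves---after the $k=n$ boundary terms are absorbed so as to turn every $\sum_{k=1}^{n-1}$ into $\sum_{k=1}^{n}$---the task of proving the closed identity
\begin{equation*}
2\sum_{k=1}^n H_{k-1}(-2a)\,\frac{k\binom{n}{k}}{\binom{n+k}{k}}=\sum_{k=1}^n\frac{(-1)^{k-1}\binom{n}{k}}{k^{2a-1}\binom{n+k}{k}}-n\sum_{k=1}^n\frac{(-1)^{k-1}\binom{n}{k}}{k^{2a}\binom{n+k}{k}}.
\end{equation*}
Here \eqref{eq3.2.1b} does the real work: writing $H_{k-1}(-2a)=\sum_{l=1}^{k-1}(-1)^l/l^{2a}$ and interchanging summation gives $\sum_{l=1}^{n-1}\frac{(-1)^l}{l^{2a}}\cdot 2\sum_{k=l+1}^n k\binom{n}{k}/\binom{n+k}{k}$, and \eqref{eq3.2.1b} evaluates the inner sum as $n\binom{n-1}{l}/\binom{n+l}{l}=(n-l)\binom{n}{l}/\binom{n+l}{l}$. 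Distributing the factor $(n-l)$ over the two resulting powers of $l$ and restoring the sign via $(-1)^l=-(-1)^{l-1}$ then yields exactly the two sums on the right.

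I expect the main obstacle to be purely organizational: correctly tracking the several $k=n$ boundary terms generated by the factor $1/n^{2b+2}$ and by the separate contribution $\frac{1}{n^{2b+1}}S_n(\{2\}^a)$, and verifying that they recombine to convert the truncated sums $\sum_{k=1}^{n-1}$ into full sums $\sum_{k=1}^{n}$ with no residue remaining. The conceptual point---that the \emph{alternating} index $-2a$ appears precisely because the evaluation \eqref{eq3.2.1b} contributes the factor $(-1)^l/l^{2a}$ summed over $l$---is what makes the negative argument $H_{k-1}(-2a)$ in \eqref{eq35} natural rather than surprising.
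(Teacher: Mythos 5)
Your proposal is correct, and for the main identity \eqref{eq35} it coincides with the paper's own argument: the paper likewise proceeds by induction on $n$ via the peeling recurrence, invokes \eqref{S2m} for the $\frac{1}{n^{2b+1}}S_n(\{2\}^a)$ term, and appeals to \eqref{eq3.2.1b} to finish -- though the paper compresses this into ``we may easily deduce the result,'' whereas you make explicit the closed identity
\begin{equation*}
2\sum_{k=1}^n H_{k-1}(-2a)\,\frac{k\binom{n}{k}}{\binom{n+k}{k}}
=\sum_{k=1}^n\frac{(-1)^{k-1}\binom{n}{k}}{k^{2a-1}\binom{n+k}{k}}
-n\sum_{k=1}^n\frac{(-1)^{k-1}\binom{n}{k}}{k^{2a}\binom{n+k}{k}}
\end{equation*}
to which everything reduces after the $k=n$ boundary terms are absorbed; I checked that the absorption works exactly as you claim, and that the interchange of summation together with $n\binom{n-1}{l}=(n-l)\binom{n}{l}$ correctly produces the two sums on the right. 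Where you genuinely diverge from the paper is \eqref{eq34}: the paper proves it by induction on $b$, using \eqref{eq3.2.1d} for the base case $b=0$ and the telescoping identity \eqref{eq3.2.1c} for the step $S_n(1,\{2\}^{b+1})=\sum_{k=1}^n S_k(1,\{2\}^b)/k^2$, while you run the same induction on $n$ as for \eqref{eq35}, with $S_n(\emptyset)=1$ in place of $S_n(\{2\}^a)$, reducing to \eqref{eq3.2.1b} at $l=0$, i.e.\ $2\sum_{k=1}^n k\binom{n}{k}/\binom{n+k}{k}=n$ (note your displayed value $n/2$ should be read as the sum without the factor $2$; the bookkeeping you then do is consistent). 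Your route has the advantage of treating both identities uniformly with a single lemma, \eqref{eq3.2.1b}, and dispensing with \eqref{eq3.2.1c} and \eqref{eq3.2.1d} altogether; the paper's induction on $b$ buys a slightly shorter computation for \eqref{eq34} and exhibits the structural fact that appending a $2$ to the index string corresponds to the clean telescoping \eqref{eq3.2.1c}. Both are complete proofs of the theorem.
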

\begin{proof}
To prove the first identity we proceed by induction on $b.$ For $b=0$ its validity follows from (\ref{eq3.2.1d}).
Assume the formula holds for $b>0.$ Then we easily obtain from~(\ref{eq3.2.1c}),
\begin{equation*}
\begin{split}
S_n(1,\{2\}^{b+1})&=\sum_{k=1}^n\frac{S_k(1,\{2\}^b)}{k^2}=2\sum_{k=1}^n\frac{1}{k^2}\sum_{l=1}^k
\frac{\binom{k}{l}}{l^{2m+1}\binom{k+l}{l}} \\
&=2\sum_{l=1}^n\frac{1}{l^{2m+1}}\sum_{k=l}^n\frac{\binom{k}{l}}{k^2\binom{k+l}{l}}
=2\sum_{l=1}^n\frac{\binom{n}{l}}{l^{2m+3}\binom{n+l}{l}},
\end{split}
\end{equation*}
as required. To prove the second identity of our theorem we proceed by induction on $n.$ Obviously, it is valid for $n=1.$
For $n>1$ we use the equality
\begin{equation*}
S_n(\{2\}^a,1,\{2\}^b)=\sum_{l=0}^b\frac{1}{n^{2(b-l)}}S_{n-1}(\{2\}^a,1,\{2\}^l)
+\frac{1}{n^{2b+1}}S_n(\{2\}^a)
\end{equation*}
and apply the same arguments as in the proof of Theorem \ref{T3.2.1}. Then with the help of the formula (\ref{eq3.2.1b})
we may easily deduce the result.
\end{proof}
\begin{remark}
Note that the identities for the finite multiple harmonic sums found in Theorems \ref{T3.2.1}, \ref{T3.2.2}
can be used for evaluating multiple zeta-star values. For example, letting $n$ tend to infinity in (\ref{S2a32b}), (\ref{eq34}), (\ref{eq35})
we get expressions for $\zeta^*(\{2\}^a,3,\{2\}^b),$ $\zeta^*(\{2\}^a,1,\{2\}^b)$ in terms of alternating Euler sums:
\begin{align} \label{eq44}
\zeta^*(\{2\}^a,3,\{2\}^b)&=2\overline{\zeta}(2a+2b+3)+4\sum_{k=1}^{\infty}\frac{(-1)^{k-1}H_{k-1}(2a+1)}{k^{2b+2}}, \\
 \label{eq45}
\zeta^*(\{2\}^a,1,\{2\}^b)&=2\overline{\zeta}(2a+2b+1)-4\sum_{k=1}^{\infty}\frac{H_{k-1}(-2a)}{k^{2b+1}}, \quad a, b\ge 1, \\
\zeta^*(1,\{2\}^b)&=2\zeta(2b+1), \qquad b\ge 1, \label{eq45.5} \end{align}
where
\begin{equation*}
\overline{\zeta}(s):=\sum_{k=1}^{\infty}\frac{(-1)^{k-1}}{k^s}=(1-2^{1-s})\zeta(s),
\end{equation*}
with $\overline{\zeta}(1)=\log 2,$ is the alternating zeta function.
Formula (\ref{eq45.5}) was proved previously by several authors (see \cite[p.~292, Ex.~b]{OW:06}, \cite{Zl:05}, \cite{Va:96}).
Evaluations for length $2$ Euler sums of odd weight appearing on the right-hand sides of (\ref{eq44}) and (\ref{eq45}) in terms of zeta values are well known (see, for example, \cite[Theorem 7.2]{FS:98}).
Therefore by \cite[Theorem 7.2]{FS:98}, we obtain
\begin{equation*}
\zeta^*(\{2\}^a,3,\{2\}^b)=4\sum_{r=1}^{K}\left(\binom{2r}{2b+1}\Bigl(1-\frac{1}{2^{2r}}\Bigr)+\delta_{r,a}-
\binom{2r}{2a}\right)\zeta(2r+1)\overline{\zeta}(2K-2r),
\end{equation*}
where $\overline{\zeta}(0):=1$, $K=a+b+1$,
which gives another proof of Zagier's formula (\ref{zzzz}) for the zeta-star value $\zeta^*(\{2\}^a,3,\{2\}^b),$
and
\begin{equation} \label{121}
\zeta^*(\{2\}^a,1,\{2\}^b)=4\sum_{r=1}^{a+b}\left(\binom{2r}{2b}-\binom{2r}{2a-1}\Bigl(1-\frac{1}{2^{2r}}\Bigr)
\right)\zeta(2r+1)\overline{\zeta}(2a+2b-2r)
\end{equation}
for $a, b\ge 1.$ Note that formula (\ref{121}) was also proved in \cite[Theorem 1.6]{TY:12} by another method.
\end{remark}

\section{Auxiliary  results on congruences}

In this section we collect several congruences that will be required later in this paper.

\begin{enumerate}

\item[(i)] (\cite[Theorem 1.6]{Zhao:06}) for positive integers $a, r$ and for any prime $p>ar+2,$
\begin{align*}
H_{p-1}(\{a\}^r)\equiv
\begin{cases}
(-1)^r\frac{a(ar+1)}{2(ar+2)}\,p^2\,B_{p-ar-2} \pmod{p^3}
 &\mbox{if  $ar$ is odd,}\\
 (-1)^{r-1}\frac{a}{ar+1}\,p\,B_{p-ar-1}
  \pmod{p^2} &\mbox{if  $ar$ is even;}
\end{cases}
\end{align*}

\item[(ii)] (\cite[Theorems 3.1, 3.2]{Zhao:06}) for positive integers $a_1, a_2$ and for any prime $p\ge a_1+a_2,$
\begin{equation*}
H_{p-1}(a_1,a_2)\equiv\frac{(-1)^{a_2}}{a_1+a_2}\binom{a_1+a_2}{a_1}B_{p-a_1-a_2} \pmod{p},
\end{equation*}
moreover, if $a_1+a_2$ is even, then for any prime $p>a_1+a_2+1,$
\begin{equation*}
\begin{split}
H_{p-1}(a_1,a_2)&\equiv p\left[(-1)^{a_1}a_2\binom{a_1+a_2+1}{a_1}-(-1)^{a_1}a_1\binom{a_1+a_2+1}{a_2}-a_1-a_2\right] \\[3pt]
&\times
\frac{B_{p-a_1-a_2-1}}{2(a_1+a_2+1)} \pmod{p^2};
\end{split}
\end{equation*}

\item[(iii)] (\cite[Theorem 3.5]{Zhao:06} if $a_1, a_2, a_3\in {\mathbb N}$  and $w:=a_1+a_2+a_3$ is odd, then for any prime $p>w,$
we have
\begin{equation*}
H_{p-1}(a_1,a_2,a_3)\equiv \left[(-1)^{a_1}\binom{w}{a_1}-(-1)^{a_3}\binom{w}{a_3}\right]\frac{B_{p-w}}{2w} \pmod{p};
\end{equation*}

\item[(iv)] (\cite[Theorem 5.2]{Sunzh:00}) for a positive integer $a$ and for any prime $p\ge a+2,$ we have
\begin{align*}
H_{\frac{p-1}{2}}(a)\equiv
\begin{cases}
-2q_p(2)+p q^2_p(2)-p^2\left(\frac{2}{3}\,q^3_p(2)+\frac{7}{12}\,B_{p-3}\right)  \!\!\!\!\pmod{p^3} &\mbox{\!\!if $a=1$,}\\[6pt]
\frac{a(2^{a+1}-1)}{2(a+1)}\,p\,B_{p-a-1}  \pmod{p^2} &\mbox{\!\!if $a$ is even,}\\[6pt]
-\frac{2^a-2}{a}\,B_{p-a}  \pmod{p} &\mbox{\!\!if $a>1$ is odd,}
\end{cases}
\end{align*}
where $q_p(2)=(2^{p-1}-1)/p$ is the so-called {\sl Fermat quotient};

\vspace{0.3cm}

\item[(v)] (\cite[Lemma 1]{HHT:11}) if $a, b$ are positive integers and $a+b$ is odd, then for any prime $p>a+b,$
\begin{equation*}
H_{\frac{p-1}{2}}(a,b)\equiv \frac{B_{p-a-b}}{2(a+b)}\left((-1)^b\binom{a+b}{a}+2^{a+b}-2\right) \pmod{p};
\end{equation*}

\item[(vi)] by (i) and (iv), for any positive integer $a$ and any prime $p\ge a+2,$ we have
\begin{align*}
H_{p-1}(-a)&=2^{1-a}H_{\frac{p-1}{2}}(a)-H_{p-1}(a)\\
&\equiv
\begin{cases}
-2q_p(2)+p q^2_p(2)-p^2\left(\frac{2}{3}\,q^3_p(2)+\frac{1}{4}\,B_{p-3}\right) \!\!\!\!\! \pmod{p^3} &\mbox{\!\!\!if $a=1$,}\\[6pt]
\frac{a(1-2^{-a})}{a+1}\,p\,B_{p-a-1}  \pmod{p^2} &\mbox{\!\!\!if $a$ is even,}\\[6pt]
-\frac{2(1-2^{1-a})}{a}\,B_{p-a}\pmod{p} &\mbox{\!\!\!if $a>1$ is odd;}
\end{cases}
\end{align*}

\item[(vii)] (\cite[Theorem 3.1]{TZ:10}) for positive integers $a, b$ of distinct parity and a prime $p\ge a+b+1,$
\begin{equation*}
H_{p-1}(-a,b)\equiv H_{p-1}(a, -b)\equiv \frac{1-2^{1-a-b}}{a+b}\, B_{p-a-b} \pmod{p},
\end{equation*}
\begin{equation*}
H_{p-1}(-a,-b)\equiv \frac{2^{1-a-b}-1}{a+b}\,(-1)^b\binom{a+b}{b}B_{p-a-b} \pmod{p};
\end{equation*}

\item[(viii)] (\cite[Theorem 4.1]{TZ:10})
if $a, b, c\in {\mathbb N}$  and $w:=a+b+c$ is odd, then for any prime $p>w,$
\begin{equation*}
\begin{split}
\qquad\qquad 2H_{p-1}(a,-b,-c)&\equiv H_{p-1}(c+b,a)+H_{p-1}(-c,-b-a) \\
&\qquad\qquad\quad -H_{p-1}(-c)H_{p-1}(-b,a) \pmod{p}, \\[3pt]
\qquad\qquad 2H_{p-1}(-a,b,-c)&\equiv H_{p-1}(-c)H_{p-1}(b,-a)-H_{p-1}(-c,b)H_{p-1}(-a) \\
&\quad\quad +H_{p-1}(-c-b,-a)+H_{p-1}(-c,-b-a) \pmod{p}.
\end{split}
\end{equation*}
\end{enumerate}

\vspace{0.1cm}

In \cite[Lemma 6.2]{CD:06}  it was proved that for positive integers $a,b$ and a prime $p>2a+2b+1,$
\begin{equation*}
H_{p-1}(-2a, -2b)\equiv 0 \pmod{p}.
\end{equation*}
However, we will need a stronger version of this congruence.
\begin{lemma} \label{l0}
Let $a,b$ be positive integers and a prime $p>2a+2b+1.$ Then the following congruence holds modulo $p^2:$
\begin{equation*}
H_{p-1}(-2a,-2b)\equiv\left(\frac{(a-b)(1-2^{-2a-2b})}{(2a+1)(2b+1)}\binom{2a+2b}{2a}-\frac{a+b}{2a+2b+1}\right)pB_{p-2a-2b-1}.
\end{equation*}
\end{lemma}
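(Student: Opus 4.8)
The plan is to produce two independent relations linking the two weight-$(2a+2b)$ length-two sums $H_{p-1}(-2a,-2b)$ and $H_{p-1}(-2b,-2a)$, and then solve for $H_{p-1}(-2a,-2b)$ modulo $p^2$, feeding in the single-sum and length-two congruences already listed. First I would write down the stuffle (harmonic product) identity obtained by multiplying the two single sums and splitting the resulting double sum according to whether the indices satisfy $i<j$, $i>j$, or $i=j$. On the diagonal the sign is $(-1)^{2i}=1$, so that contribution is the \emph{non-alternating} sum $H_{p-1}(2a+2b)$, and one gets
\begin{equation*}
H_{p-1}(-2a)\,H_{p-1}(-2b)=H_{p-1}(-2a,-2b)+H_{p-1}(-2b,-2a)+H_{p-1}(2a+2b).
\end{equation*}

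Second, I would exploit the reflection $k\mapsto p-k$. Substituting $i=p-j'$, $j=p-i'$ in $H_{p-1}(-2a,-2b)=\sum_{1\le i<j\le p-1}(-1)^{i+j}i^{-2a}j^{-2b}$ gives a bijection of the index set preserving $i'<j'$; since $p$ is odd, $(-1)^{p-m}=-(-1)^m$, so the overall sign is unchanged, and expanding $(p-m)^{-2a}\equiv m^{-2a}+2a\,p\,m^{-2a-1}\pmod{p^2}$ (and similarly with exponent $2b$) and multiplying out modulo $p^2$ yields the reflection formula
\begin{equation*}
\begin{split}
H_{p-1}(-2a,-2b)\equiv{}& H_{p-1}(-2b,-2a)\\
&+2ap\,H_{p-1}(-2b,-2a-1)+2bp\,H_{p-1}(-2b-1,-2a)\pmod{p^2}.
\end{split}
\end{equation*}
Eliminating $H_{p-1}(-2b,-2a)$ between the two relations produces
\begin{equation*}
\begin{split}
2H_{p-1}(-2a,-2b)\equiv{}& H_{p-1}(-2a)H_{p-1}(-2b)-H_{p-1}(2a+2b)\\
&+2ap\,H_{p-1}(-2b,-2a-1)+2bp\,H_{p-1}(-2b-1,-2a)\pmod{p^2}.
\end{split}
\end{equation*}

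Now every term can be evaluated. By congruence (vi) both $H_{p-1}(-2a)$ and $H_{p-1}(-2b)$ are $O(p)$, so their product vanishes modulo $p^2$. By congruence (i) in the even case with $r=1$ one has $H_{p-1}(2a+2b)\equiv\frac{2a+2b}{2a+2b+1}\,pB_{p-2a-2b-1}\pmod{p^2}$ (here $p>2a+2b+1$ forces $p\ge 2a+2b+3$, since $2a+2b+2$ is even, so the hypothesis $p>2a+2b+2$ of (i) holds). The last two sums are multiplied by $p$, so they are only needed modulo $p$, and since each involves exponents of distinct parity, congruence (vii) applies, giving $H_{p-1}(-2b,-2a-1)\equiv\frac{1-2^{-2a-2b}}{2a+2b+1}\binom{2a+2b+1}{2a+1}B_{p-2a-2b-1}$ and $H_{p-1}(-2b-1,-2a)\equiv\frac{2^{-2a-2b}-1}{2a+2b+1}\binom{2a+2b+1}{2a}B_{p-2a-2b-1}$.

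Finally I would carry out the algebraic reduction. Using $\binom{2a+2b+1}{2a+1}=\frac{2a+2b+1}{2a+1}\binom{2a+2b}{2a}$ and $\binom{2a+2b+1}{2a}=\frac{2a+2b+1}{2b+1}\binom{2a+2b}{2a}$, the factor $2a+2b+1$ cancels, the two binomial terms merge via $\frac{2a}{2a+1}-\frac{2b}{2b+1}=\frac{2(a-b)}{(2a+1)(2b+1)}$, and dividing by $2$ reproduces exactly the asserted coefficient. I expect the only genuine care to be required in the reflection step: correctly tracking the parity sign $(-1)^{p-m}=-(-1)^m$ and the first-order term of $(p-m)^{-2a}$, so that the correction sums emerge with the precise weights $-2a-1$ and $-2b-1$; beyond that the argument is a direct substitution of the listed congruences followed by routine binomial bookkeeping.
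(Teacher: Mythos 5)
Your proposal is correct and follows essentially the same route as the paper's own proof: the reflection $k\mapsto p-k$ expanded to first order in $p$ (giving the correction terms $2ap\,H_{p-1}(-2b,-2a-1)+2bp\,H_{p-1}(-2b-1,-2a)$, evaluated via (vii)), combined with the exact stuffle identity $H_{p-1}(-2a)H_{p-1}(-2b)=H_{p-1}(-2a,-2b)+H_{p-1}(-2b,-2a)+H_{p-1}(2a+2b)$ and the congruences (i) and (vi), then solving for $H_{p-1}(-2a,-2b)$. Your sign bookkeeping, the parity observation ensuring $p>2a+2b+2$ so that (i) applies, and the final binomial reduction all check out against the paper.
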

\begin{proof}
Clearly (see \cite[Lemma 7.1]{TZ:10}),
\begin{equation*}
\begin{split}
H_{p-1}(-2a,-2b)&=\sum_{1\le j<k\le p-1}\frac{(-1)^{j+k}}{j^{2a}k^{2b}}=\sum_{1\le k<j\le p-1}\frac{(-1)^{p-j+p-k}}{(p-j)^{2a}(p-k)^{2b}}\\[2pt]
&\equiv \sum_{1\le k<j\le p-1}\frac{(-1)^{k+j}}{k^{2b}j^{2a}}\left(1+\frac{2bp}{k}\right)\left(1+\frac{2ap}{j}\right)
\equiv
H_{p-1}(-2b,-2a) \\[2pt]
&\,+2bpH_{p-1}(-2b-1,-2a)+2apH_{p-1}(-2b,-2a-1) \pmod{p^2}.
\end{split}
\end{equation*}
Therefore by (vii), we obtain
\begin{equation} \label{eq32}
\begin{split}
 H_{p-1}&(-2a,-2b)\equiv H_{p-1}(-2b,-2a) \\[3pt]
&\qquad\quad+\frac{2p(a-b)(1-2^{-2a-2b})}{(2a+1)(2b+1)}\binom{2a+2b}{2a}B_{p-2a-2b-1} \pmod{p^2}.
\end{split}
\end{equation}
On the other hand, we have
\begin{equation*}
\begin{split}
H_{p-1}(-2a,-2b)&+H_{p-1}(-2b,-2a) \\
&\equiv H_{p-1}(-2a)H_{p-1}(-2b)-H_{p-1}(2a+2b) \pmod{p^2}
\end{split}
\end{equation*}
and therefore by (vi) and (i),
\begin{equation} \label{eq33}
\begin{split}
H_{p-1}(-2a,-2b)&+H_{p-1}(-2b,-2a)\equiv -H_{p-1}(2a+2b) \\
&\equiv
-\frac{(2a+2b)p}{2a+2b+1}\,B_{p-2a-2b-1} \pmod{p^2}.
\end{split}
\end{equation}
Now from (\ref{eq32}) and (\ref{eq33}) we obtain the required congruence.
\end{proof}

\begin{lemma} \label{l02}
For positive integers $a, b$  and a prime $p>2a+2b+1,$ the following congruence holds modulo $p:$
\begin{equation*}
H_{\frac{p-1}{2}}(-2a,-2b-1)\equiv \frac{2^{2a+2b}-1}{2a+2b+1}\left(\frac{1}{2^{2a+2b+1}}\binom{2a+2b+1}{2a}+1\right)B_{p-2a-2b-1}.
\end{equation*}
\end{lemma}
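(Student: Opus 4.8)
The plan is to pin down the target sum $X:=H_{(p-1)/2}(-2a,-(2b+1))$ by combining two relations that both involve its ``reversed'' companion $Y:=H_{(p-1)/2}(-(2b+1),-2a)$, and then to eliminate $Y$. Throughout I set $N=(p-1)/2$.

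First I would record the stuffle (quasi-shuffle) identity coming from the two half-range single sums $H_N(-2a)$ and $H_N(-(2b+1))$. Since $\mathrm{sgn}(-2a)\cdot\mathrm{sgn}(-(2b+1))=1$, splitting the product of these two sums according to whether the summation indices satisfy $<$, $>$, or $=$ gives the exact identity
\[
H_N(-2a)H_N(-(2b+1))=X+Y+H_N(2a+2b+1),
\]
so that $X+Y=H_N(-2a)H_N(-(2b+1))-H_N(2a+2b+1)$.

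Next---and this is the heart of the argument---I would produce a second relation by decomposing the full-length sum $H_{p-1}(-2a,-(2b+1))$ into three blocks according to whether the inner index $j$ and the outer index $k$ lie in $[1,N]$ or in $[N+1,p-1]$. The block with both indices $\le N$ is exactly $X$; in the block with both indices $>N$ I apply the substitution $j\mapsto p-j$, $k\mapsto p-k$, and in the mixed block I apply $k\mapsto p-k$ only. Using $(p-m)^{2a}\equiv m^{2a}$ and $(p-m)^{2b+1}\equiv -m^{2b+1}\pmod p$ together with $(-1)^{p-m}=-(-1)^m$, these two blocks reduce modulo $p$ to $-Y$ and to the product $H_N(-2a)H_N(-(2b+1))$, respectively, yielding
\[
H_{p-1}(-2a,-(2b+1))\equiv X-Y+H_N(-2a)H_N(-(2b+1))\pmod p.
\]

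Adding this congruence to the stuffle identity eliminates both $Y$ and the product term, leaving $2X\equiv H_{p-1}(-2a,-(2b+1))-H_N(2a+2b+1)\pmod p$; since $p$ is odd, $2$ is invertible and this determines $X$. It then remains to evaluate the right-hand side with the tools already available: $H_{p-1}(-2a,-(2b+1))$ by the congruence for $H_{p-1}(-a,-b)$ in (vii) (the exponents $2a$ and $2b+1$ having distinct parity), and $H_N(2a+2b+1)$ by the ``$a>1$ odd'' case of (iv). Substituting these and simplifying---the required algebraic identity being $(2^{t}-1)\bigl(2^{-t}\binom{2a+2b+1}{2a}+2\bigr)=(1-2^{-t})\binom{2a+2b+1}{2a}+2^{t+1}-2$ with $t=2a+2b$---gives exactly the claimed formula after dividing by $2$. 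The main obstacle is the sign bookkeeping in the reflection step: the even-exponent index preserves its sign under $m\mapsto p-m$ while the odd-exponent index flips, and this must be tracked consistently alongside the alternating factors $(-1)^{j}$ and $(-1)^{k}$. A pleasant feature to highlight is that the two alternating half-sums $H_N(-2a)$ and $H_N(-(2b+1))$, for which no evaluation appears among (i)--(viii), cancel in the final addition and hence never need to be computed.
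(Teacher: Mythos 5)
Your proposal is correct and in essence coincides with the paper's proof: both arguments reflect the upper half-range via $k\mapsto p-k$ to reach the identical key congruence $2H_{\frac{p-1}{2}}(-2a,-2b-1)\equiv H_{p-1}(-2a,-2b-1)-H_{\frac{p-1}{2}}(2a+2b+1)\pmod p$, and then conclude with (vii) and (iv) exactly as you do. The only (cosmetic) difference is the bookkeeping: the paper reflects the tail $\sum_{k=(p+1)/2}^{p-1}(-1)^kH_{k-1}(-2a)/k^{2b+1}$ and invokes $H_{p-k-1}(-2a)\equiv H_k(-2a)\pmod p$, which relies on $H_{p-1}(-2a)\equiv 0 \pmod p$ from (vi), whereas your stuffle-plus-three-blocks packaging makes the product $H_{\frac{p-1}{2}}(-2a)\,H_{\frac{p-1}{2}}(-2b-1)$ cancel outright and so never needs that evaluation.
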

\begin{proof}
It is easily seen that
\begin{equation} \label{eq37}
\begin{split}
&H_{p-1}(-2a,-2b-1)-H_{\frac{p-1}{2}}(-2a,-2b-1)
=\sum_{k=\frac{p+1}{2}}^{p-1}\frac{(-1)^kH_{k-1}(-2a)}{k^{2b+1}} \\
&=\sum_{k=1}^{(p-1)/2}\frac{(-1)^{p-k}H_{p-k-1}(-2a)}{(p-k)^{2b+1}}
\equiv \sum_{k=1}^{(p-1)/2}\frac{(-1)^kH_{p-k-1}(-2a)}{k^{2b+1}} \pmod{p}.
\end{split}
\end{equation}
For the harmonic number $H_{p-k-1}(-2a),$ by (vi) we have
\begin{equation*}
\begin{split}
H_{p-k-1}(-2a)
&=\!\!\sum_{j=1}^{p-k-1}\frac{(-1)^{j}}{j^{2a}}
=\!\!\sum_{j=k+1}^{p-1}\frac{(-1)^{p-j}}{(p-j)^{2a}}
\equiv -\!\!\!\sum_{j=k+1}^{p-1}\frac{(-1)^j}{j^{2a}}\\
&=-(H_{p-1}(-2a)-H_k(-2a))\equiv H_k(-2a) \pmod{p}.
\end{split}
\end{equation*}
Substituting the above congruence in (\ref{eq37}) we obtain
\begin{equation*}
\begin{split}
H_{p-1}(-2a,-2b-1)&-H_{\frac{p-1}{2}}(-2a,-2b-1)\equiv \!\!\!\sum_{k=1}^{(p-1)/2}\frac{(-1)^kH_k(-2a)}{k^{2b+1}}\\
&=H_{\frac{p-1}{2}}(-2a,-2b-1)
+H_{\frac{p-1}{2}}(2a+2b+1) \pmod{p},
\end{split}
\end{equation*}
which by (iv) and (vii) implies the required result.
\end{proof}

\section{New congruences for multiple harmonic sums}
\label{S4}

In this section we state and prove our main results on multiple harmonic sums.

\begin{theorem} \label{T3.3.1}
Let $a,b$ be non-negative integers and $p$ be a prime such that $p>2a+2b+3.$ Then
\begin{equation*}
\begin{split}
S_{p-1}(\{2\}^a,3,\{2\}^b)&\equiv \frac{b-a}{(a+1)(b+1)}\binom{2a+2b+2}{2a+1} B_{p-2a-2b-3} \pmod{p},\\[2pt]
H_{p-1}(\{2\}^a,3,\{2\}^b) &\equiv \frac{(-1)^{a+b}(a-b)}{(a+1)(b+1)}\binom{2a+2b+2}{2a+1} B_{p-2a-2b-3} \pmod{p}.
\end{split}
\end{equation*}
\end{theorem}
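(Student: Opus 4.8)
The plan is to specialize the corollary formula \eqref{S2a32b} to $n=p-1$ and then reduce everything, modulo $p$, to the double and triple harmonic sums already controlled in Section~3. The whole computation is driven by a single expansion. Writing $R_k=\frac{(-1)^{k-1}\binom{p-1}{k}}{\binom{p-1+k}{k}}$ and noting $\frac{\binom{p-1}{k}}{\binom{p-1+k}{k}}=\prod_{j=1}^{k}\frac{p-j}{p-1+j}$, the factor $j=1$ contributes a simple pole $1/p$, so I first expand the product to second order in $p$: the numerator is $(-1)^k k!\,(1-pH_k(1))$ and the denominator is $p\,(k-1)!\,(1+pH_{k-1}(1))$ modulo the appropriate power of $p$, whence $R_k\equiv -\frac{k}{p}+2k\,H_{k-1}(1)+1 \pmod p$. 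The $1/p$ term is harmless because it will always end up multiplying an \emph{even}-weight sum $H_{p-1}(\cdots)$, which vanishes mod $p$ and is known mod $p^2$ by (i) and (ii); thus the poles are removable and leave finite contributions.

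Substituting this expansion into \eqref{S2a32b} with $c=3$ turns each of the two sums into a $\mathbb Z$-linear combination of ordinary multiple harmonic sums $H_{p-1}$ of depth at most three. With $w=2a+2b+3$, the first sum yields $-\frac2p H_{p-1}(w-1)+4H_{p-1}(1,w-1)+2H_{p-1}(w)$. In the second sum the product $H_{k-1}(2a+1)H_{k-1}(1)$ coming from the $2k\,H_{k-1}(1)$ term must be resolved by the stuffle relation $H_{k-1}(2a+1)H_{k-1}(1)=H_{k-1}(2a+1,1)+H_{k-1}(1,2a+1)+H_{k-1}(2a+2)$, after which the second sum becomes a combination of $H_{p-1}(2a+1,2b+1)$, $H_{p-1}(2a+1,2b+2)$, $H_{p-1}(2a+2,2b+1)$, $H_{p-1}(2a+1,1,2b+1)$ and $H_{p-1}(1,2a+1,2b+1)$, each of weight $w$, with the $1/p$ term again pairing with the even-weight sum $H_{p-1}(2a+1,2b+1)$.

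Now I would evaluate every term by the congruences of Section~3: the depth-one even-weight sum by (i), the depth-two sums by (ii) (using the sharper mod $p^2$ form precisely for the two terms divided by $p$), and the two depth-three odd-weight sums by (iii). This expresses $S_{p-1}(\{2\}^a,3,\{2\}^b)$ as an explicit rational multiple of $B_{p-w}$ in $\binom{w}{2a+1}$, $\binom{w}{2b+1}$ and $\binom{w}{2a+2}$. The last step is purely algebraic: using $\binom{w}{2a+1}=\frac{w}{2b+2}\binom{w-1}{2a+1}$ and $\binom{w}{2b+1}=\binom{w}{2a+2}=\frac{w}{2a+2}\binom{w-1}{2a+1}$, with $\binom{w-1}{2a+1}=\binom{2a+2b+2}{2a+1}$, all three collapse onto the single binomial $\binom{2a+2b+2}{2a+1}$, and the remaining scalar simplifies via $\frac{2b+1}{b+1}-\frac{2a+1}{a+1}=\frac{b-a}{(a+1)(b+1)}$, giving the first congruence. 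I expect the bookkeeping of these contributions — and getting the two mod-$p^2$ cancellations of the poles exactly right — to be the main source of error, although each individual step is routine.

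For the strict sum $H_{p-1}(\{2\}^a,3,\{2\}^b)$ I would argue in parallel. The reflection $k_i\mapsto p-k_{r+1-i}$ gives at once the antisymmetry $H_{p-1}(\{2\}^a,3,\{2\}^b)\equiv -H_{p-1}(\{2\}^b,3,\{2\}^a)\pmod p$, consistent with the claimed factor $(a-b)$; only the scalar remains. The cleanest route is to establish $H_{p-1}(\{2\}^a,3,\{2\}^b)\equiv(-1)^{a+b+1}S_{p-1}(\{2\}^a,3,\{2\}^b)\pmod p$ and then quote the first part. This is where the genuine obstacle lies: the naive non-strict-to-strict expansion $S-H=\sum(\text{proper mergings})$ produces sums of arbitrarily large depth with mixed even parts, which the depth-$\le 3$ congruences of Section~3 cannot evaluate, so one cannot simply sum the mergings. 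Instead I would run the binomial/reflection computation above directly on the ordinary sum, deriving a companion of \eqref{S2a32b} for $H_n$ from the strict recursion $H_n(\mathbf s)=H_{n-1}(\mathbf s)+n^{-2}H_{n-1}(s_1,\dots,s_{r-1})$ by the same induction as in Theorem~\ref{T3.2.1}; the extra sign $(-1)^{a+b}$ then emerges from the parity of the reflection $k\mapsto p-k$, while the depth stays bounded and the evaluation again reduces to (i)--(iii). Making this companion identity explicit is where the bulk of the remaining work is concentrated.
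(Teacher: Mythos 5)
Your treatment of the first congruence is, step for step, the paper's own proof: you specialize \eqref{S2a32b} at $n=p-1$; your expansion $R_k\equiv -\tfrac{k}{p}+2kH_{k-1}(1)+1 \pmod{p}$ is exactly the paper's congruence \eqref{eq26}; the stuffle resolution of $H_{k-1}(2a+1)H_{k-1}(1)$ produces the same list of weight-$(2a+2b+3)$ sums the paper writes down; and the evaluation via (i)--(iii), with the mod $p^2$ forms of (i) and (ii) used precisely to cancel the $1/p$ poles against the even-weight sums $H_{p-1}(2a+2b+2)$ and $H_{p-1}(2a+1,2b+1)$, is identical. Up to the routine bookkeeping you yourself flag, this half is correct and matches the paper.

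The genuine gap is in the second congruence. You correctly prove the reflection antisymmetry $H_{p-1}(\{2\}^a,3,\{2\}^b)\equiv -H_{p-1}(\{2\}^b,3,\{2\}^a)\pmod{p}$ (this is the paper's \eqref{eq28}, from \cite[Theorem 4.5]{Hof:07}), and you correctly diagnose that the naive expansion of $S-H$ into merged compositions is not termwise evaluable by (i)--(iii) (though note the mergings have depth at most $a+b+1$, bounded by the original depth, not ``arbitrarily large''). But you then conclude that the conversion $H_{p-1}(\{2\}^a,3,\{2\}^b)\equiv(-1)^{a+b+1}S_{p-1}(\{2\}^a,3,\{2\}^b)\pmod p$ requires a new companion binomial identity for strict sums, which you do not derive; the claimed emergence of the sign $(-1)^{a+b}$ from the reflection is asserted, not argued, so the second half of your proof is incomplete. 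The missing idea is Hoffman's \emph{inverse} identity \eqref{SH} (\cite[Theorem 6.8]{Ho:03}),
\begin{equation*}
(-1)^{l({\bf s})}S_{p-1}(\overline{\bf s})=\sum_{\bigsqcup_{i=1}^l{\bf s}_i={\bf s}}(-1)^l\prod_{i=1}^l H_{p-1}({\bf s}_i),
\end{equation*}
which goes in the opposite direction from the merging expansion and \emph{does} collapse modulo $p$: taking ${\bf s}=(\{2\}^b,3,\{2\}^a)$, any splitting into $l\ge 2$ consecutive blocks has at least one block avoiding the entry $3$, hence equal to $(\{2\}^c)$ with $1\le c\le\max(a,b)$, and $H_{p-1}(\{2\}^c)\equiv 0\pmod p$ by (i) since $p>2a+2b+3>2c+2$. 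Thus only the $l=1$ term survives, giving $(-1)^{a+b+1}S_{p-1}(\{2\}^a,3,\{2\}^b)\equiv -H_{p-1}(\{2\}^b,3,\{2\}^a)\pmod p$, which combined with your antisymmetry yields the needed conversion with no new identity at all; the second congruence then follows from the first. With this one lemma inserted in place of your unexecuted companion-identity plan, your proposal becomes exactly the paper's proof.
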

\begin{proof}
From Theorem \ref{T3.2.1} we have
\begin{equation*}
S_{p-1}(\{2\}^a,3,\{2\}^b)=2\sum_{k=1}^{p-1}\frac{(-1)^{k-1}\binom{p-1}{k}}{k^{2a+2b+3}\binom{p-1+k}{k}}
+4\sum_{k=1}^{p-1}\frac{H_{k-1}(2a+1)(-1)^{k-1}\binom{p-1}{k}}{k^{2b+2}\binom{p-1+k}{k}}.
\end{equation*}
Since
\begin{equation} \label{eq26}
\begin{split}
\frac{\binom{p-1}{k}}{\binom{p-1+k}{k}}&=\frac{(p-1)(p-2)\cdots(p-k)}{p(p+1)\cdots(p+k-1)}
=\frac{(-1)^k k}{p}\prod_{j=1}^k\left(1-\frac{p}{j}\right)\prod_{j=1}^{k-1}\left(1+\frac{p}{j}\right)^{-1} \\
&\equiv \frac{(-1)^k k}{p}(1-pH_k(1))(1-pH_{k-1}(1)) \\
&\equiv \frac{(-1)^k k}{p}\left(1-2pH_{k-1}(1)-\frac{p}{k}\right)\pmod{p},
\end{split}
\end{equation}
we obtain
\begin{equation*}
\begin{split}
S_{p-1}(\{2\}^a,3,\{2\}^b)&\equiv -\frac{2}{p}\sum_{k=1}^{p-1}\frac{1}{k^{2a+2b+2}}\left(1-2pH_{k-1}(1)-\frac{p}{k}\right) \\
&-\frac{4}{p}\sum_{k=1}^{p-1}\frac{H_{k-1}(2a+1)}{k^{2b+1}}\left(1-2pH_{k-1}(1)-\frac{p}{k}\right) \\
&=-\frac{2}{p}H_{p-1}(2a+2b+2)+4H_{p-1}(1,2a+2b+2) \\
&-\frac{4}{p}H_{p-1}(2a+1,2b+1)
+8H_{p-1}(2a+1,1,2b+1) \\
&+
8H_{p-1}(1,2a+1,2b+1)
+8H_{p-1}(2a+2,2b+1) \\
&+4H_{p-1}(2a+1,2b+2) \pmod{p}.
\end{split}
\end{equation*}
Now by (i)--(iii), we easily obtain the required congruence for $S_{p-1}(\{2\}^a,3,\{2\}^b).$
To prove the corresponding congruence for the $H$-sum, we apply the following identity which relates
$S$-version and $H$-version multiple harmonic sums (see \cite[Theorem~6.8]{Ho:03}):
\begin{equation} \label{SH}
(-1)^{l({\bf s})}S_{n}({\overline{{\bf s}}})=
\sum_{\bigsqcup_{i=1}^l{\bf s}_i={\bf s}}(-1)^l\prod_{i=1}^lH_{n}({\bf s}_i),
\end{equation}
where $\bigsqcup_{i=1}^l{\bf s}_i$ is the concatenation of ${\bf s}_1$ to ${\bf s}_l$ and the vector $\overline{\bf s}=(s_r, \ldots, s_1)$
is  obtained from ${\bf s}=(s_1, \ldots, s_r)$ by reversing its coordinates.
We substitute ${\bf s}=(\{2\}^b,3,\{2\}^a)$ and $n=p-1$ in (\ref{SH}) and note that if $l\ge 2,$ then one of ${\bf s}_i=(\{2\}^c)$
for some positive $c$ and therefore $H_{p-1}({\bf s}_i)\equiv 0 \pmod{p}.$ Thus
\begin{equation} \label{eq27}
(-1)^{a+b+1} S_{p-1}(\{2\}^a,3,\{2\}^b)\equiv -H_{p-1}(\{2\}^b,3,\{2\}^a) \pmod{p}.
\end{equation}
On the other hand, we have (see \cite[Theorem 4.5]{Hof:07})
\begin{equation} \label{eq28}
H_{p-1}(\{2\}^a,3,\{2\}^b)\equiv -H_{p-1}(\{2\}^b,3,\{2\}^a) \pmod{p}.
\end{equation}
Now from (\ref{eq27}) and (\ref{eq28}) we get the required congruence and the proof is complete.
\end{proof}

\begin{theorem} \label{T3.3.2}
Let $a, b$ be  non-negative integers and a prime  $p>2a+2b+1.$ Then
\begin{equation*}
\begin{split}
S_{p-1}(\{2\}^a,1,\{2\}^b) &\equiv \frac{4(b-a)(1-4^{-a-b})}{(2a+1)(2b+1)}\binom{2a+2b}{2a}B_{p-2a-2b-1} \pmod{p},\\[2pt]
H_{p-1}(\{2\}^a,1,\{2\}^b) &\equiv \frac{4(-1)^{a+b}(a-b)(1-4^{-a-b})}{(2a+1)(2b+1)}\binom{2a+2b}{2a}B_{p-2a-2b-1} \!\!\!\pmod{p}.
\end{split}
\end{equation*}
\end{theorem}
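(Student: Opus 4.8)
The plan is to mirror exactly the strategy used for Theorem~\ref{T3.3.1}, replacing the binomial identity \eqref{S2a32b} by its analogue \eqref{eq35} from Theorem~\ref{T3.2.2}. First I would start from
\begin{equation*}
S_{p-1}(\{2\}^a,1,\{2\}^b)=2\sum_{k=1}^{p-1}\frac{(-1)^{k-1}\binom{p-1}{k}}{k^{2(a+b)+1}\binom{p-1+k}{k}}
-4\sum_{k=1}^{p-1}\frac{H_{k-1}(-2a)\binom{p-1}{k}}{k^{2b+1}\binom{p-1+k}{k}},
\end{equation*}
and substitute the key asymptotic expansion \eqref{eq26}, namely $\binom{p-1}{k}/\binom{p-1+k}{k}\equiv \frac{(-1)^k k}{p}(1-2pH_{k-1}(1)-p/k)\pmod{p}$. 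The one genuine subtlety compared with the proof of Theorem~\ref{T3.3.1} is bookkeeping the sign factor $(-1)^k$ coming from the binomial ratio against the $\binom{n}{k}$ (no alternating $(-1)^{k-1}$) in the second sum: this turns $H_{k-1}(-2a)$ times the unsigned binomial into a genuinely alternating single-index piece, so the resulting outer sums are ordinary (non-alternating) multiple harmonic sums $H_{p-1}$, not the odd/alternating ones.

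After expanding, I expect each of the two sums to split into a leading $1/p$ term and correction terms of order $1$. Collecting these and clearing the factor $1/p$, the first sum should contribute combinations of $H_{p-1}(2a+2b+1)$ and $H_{p-1}(1,2a+2b+1)$, while the second produces $H_{p-1}(-2a,-2b-1)$ together with depth-three pieces $H_{p-1}(1,-2a,-2b-1)$ and $H_{p-1}(-2a,1,-2b-1)$ and the merged depth-two terms $H_{p-1}(-2a-1,-2b-1)$ and $H_{p-1}(-2a,-2b-2)$. The main evaluative work is then to invoke the congruences collected in Section~3: Zhao's single- and double-index results (i)--(iii) for the non-alternating pieces, and crucially Lemma~\ref{l0} for $H_{p-1}(-2a,-2b-1)$-type contributions, whose $p^2$-precision is exactly what one needs to survive division by $p$. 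The depth-three alternating sums should be handled by (viii) together with (vi)--(vii), reducing everything to products and depth-$\le 2$ sums modulo $p$.

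The step I expect to be the main obstacle is precisely the depth-three terms of mixed sign, $H_{p-1}(1,-2a,-2b-1)$ and $H_{p-1}(-2a,1,-2b-1)$: their reduction via (viii) introduces products like $H_{p-1}(-2b-1)H_{p-1}(\cdots)$ and several depth-two sums that must then be re-expressed through (i), (ii), (vii) and Lemma~\ref{l0}, after which a fair amount of Bernoulli-number arithmetic and binomial-coefficient simplification is required to see that everything collapses onto a single multiple of $B_{p-2a-2b-1}$. The target coefficient $\tfrac{4(b-a)(1-4^{-a-b})}{(2a+1)(2b+1)}\binom{2a+2b}{2a}$ already shows the factor $\binom{2a+2b}{2a}/((2a+1)(2b+1))$ familiar from Lemma~\ref{l0}, which is strong evidence that Lemma~\ref{l0} supplies the dominant term and the remaining sums contribute the $(1-4^{-a-b})$ and the symmetric correction.

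Finally, for the $H$-version I would argue exactly as in Theorem~\ref{T3.3.1}: apply the $S$-to-$H$ conversion \eqref{SH} with ${\bf s}=(\{2\}^b,1,\{2\}^a)$ and $n=p-1$, discarding every concatenation term of depth $\ge 2$ because it contains a factor $H_{p-1}(\{2\}^c)\equiv 0\pmod p$, to get $(-1)^{a+b+1}S_{p-1}(\{2\}^a,1,\{2\}^b)\equiv -H_{p-1}(\{2\}^b,1,\{2\}^a)\pmod p$. Combined with the reflection congruence $H_{p-1}(\{2\}^a,1,\{2\}^b)\equiv -H_{p-1}(\{2\}^b,1,\{2\}^a)\pmod p$ (the $c=1$ analogue of \eqref{eq28}, provable from the involution $k\mapsto p-k$ as in \cite{Hof:07}), this yields $H_{p-1}(\{2\}^a,1,\{2\}^b)\equiv(-1)^{a+b}S_{p-1}(\{2\}^a,1,\{2\}^b)$, and the second congruence follows by inserting the first.
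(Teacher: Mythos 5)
Your plan is indeed the paper's own route: identity \eqref{eq35} at $n=p-1$, the expansion \eqref{eq26}, stuffle reductions via \eqref{eq36}, the catalogue (i)--(viii) with Lemma \ref{l0} supplying the decisive mod-$p^2$ input, and \eqref{SH} for the $H$-version. But three concrete points in your write-up need repair before this becomes the paper's proof.

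First, your term bookkeeping drops the factor $k$ that you yourself quote in \eqref{eq26}: since the ratio is $\frac{(-1)^k k}{p}\left(1-2pH_{k-1}(1)-\frac{p}{k}\right)$, one power of $k$ cancels, and the expansion actually yields
\begin{equation*}
-\frac{2}{p}H_{p-1}(2a+2b)+4H_{p-1}(1,2a+2b)
\end{equation*}
from the first sum (up to a term vanishing mod $p$), and from the second
\begin{equation*}
-\frac{4}{p}H_{p-1}(-2a,-2b)+8H_{p-1}(-2a,1,-2b)+8H_{p-1}(1,-2a,-2b)+8H_{p-1}(-2a-1,-2b)+4H_{p-1}(-2a,-2b-1),
\end{equation*}
not the list you give, whose final index is shifted by one throughout. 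The shift matters: the term needing mod-$p^2$ precision is $\frac{1}{p}H_{p-1}(-2a,-2b)$ with \emph{both} entries even, which is literally what Lemma \ref{l0} covers; your putative leading term $\frac{1}{p}H_{p-1}(-2a,-2b-1)$ would not even be $O(1)$, since $H_{p-1}(-2a,-2b-1)$ is in general nonzero mod $p$ by (vii) --- an internal inconsistency that flags the misrecorded expansion. In the paper, after (viii), \eqref{eq36} and (vi), all the order-one terms above cancel completely, leaving just $-\frac{2}{p}H_{p-1}(2a+2b)-\frac{4}{p}H_{p-1}(-2a,-2b)$, so the ``fair amount of Bernoulli arithmetic'' you anticipate collapses to (i) plus Lemma \ref{l0}.

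Second, \eqref{eq35} requires $a\ge 1$ (the entry $-2a$ is meaningless for $a=0$), while the theorem allows $a=0$; the paper treats that case separately from \eqref{eq34} together with (vi) and (vii), and your proposal omits it. Third, your final sign is off: your two displayed congruences combine to $H_{p-1}(\{2\}^a,1,\{2\}^b)\equiv(-1)^{a+b+1}S_{p-1}(\{2\}^a,1,\{2\}^b)\pmod{p}$, not $(-1)^{a+b}$; with your sign the $H$-congruence would carry $(b-a)$ instead of the stated $(a-b)$. In fact the reflection congruence is dispensable here: \eqref{SH} alone gives $H_{p-1}(\{2\}^b,1,\{2\}^a)\equiv(-1)^{a+b}S_{p-1}(\{2\}^a,1,\{2\}^b)\pmod{p}$, and swapping $a\leftrightarrow b$ in the already-proved $S$-congruence, using $\binom{2a+2b}{2b}=\binom{2a+2b}{2a}$, yields the stated $H$-result directly, which is what the paper means by ``easily follows from relation \eqref{SH}.''
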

\begin{proof}
We begin by considering the case $a\ge 1.$ Then from identity (\ref{eq35}) with $n=p-1$ and
congruence (\ref{eq26}) we have
\begin{equation*}
\begin{split}
S_{p-1}(\{2\}^a,1,\{2\}^b)&\equiv
-\frac{2}{p}\sum_{k=1}^{p-1}\frac{1}{k^{2a+2b}}\left(1-2pH_{k-1}(1)-\frac{p}{k}\right) \\
&-\frac{4}{p}\sum_{k=1}^{p-1}\frac{H_{k-1}(-2a)(-1)^k}{k^{2b}}\left(1-2pH_{k-1}(1)-\frac{p}{k}\right)
\pmod{p}.
\end{split}
\end{equation*}
By the obvious equality
\begin{equation}
H_n(a)H_n(b)=H_n(a,b)+H_n(b,a)+H_n\bigl({\textup{sgn}}(ab)(|a|+|b|)\bigr) \quad a,b\in {\mathbb Z}\setminus\{0\},
\label{eq36}
\end{equation}
we obtain
\begin{equation*}
\begin{split}
S_{p-1}(\{2\}^a,1,\{2\}^b)\equiv &-\frac{2}{p}H_{p-1}(2a+2b)+4H_{p-1}(1,2a+2b)-\frac{4}{p}H_{p-1}(-2a,-2b) \\
&+8H_{p-1}(-2a,1,-2b)+8H_{p-1}(1,-2a,-2b) \\
&+8H_{p-1}(-2a-1,-2b)+4H_{p-1}(-2a,-2b-1) \pmod{p}.
\end{split}
\end{equation*}
Applying (viii), (\ref{eq36}), (vi) and simplifying we deduce that
\begin{equation*}
S_{p-1}(\{2\}^a,1,\{2\}^b)\equiv -\frac{2}{p}H_{p-1}(2a+2b)-\frac{4}{p}H_{p-1}(-2a,-2b) \pmod{p}.
\end{equation*}
Now the required congruence follows from (i) and Lemma \ref{l0}. The case $a=0$ is handled in the same way with the help
of identity (\ref{eq34}) and congruences (vi), (vii). The corresponding congruence for $H_{p-1}(\{2\}^a,1,\{2\}^b)$ easily follows
from relation~(\ref{SH}).
\end{proof}

\begin{corollary} \label{c3}
Let $p$ be a prime greater than $7.$ Then
\begin{equation} \label{eq40}
S_{p-1}(1,1,1,4)\equiv S_{p-1}(1,2,2,2)\equiv \frac{27}{16}\,B_{p-7} \pmod{p},
\end{equation}
\begin{equation} \label{eq41}
S_{p-1}(1,1,1,6)\equiv \frac{1}{54}B_{p-3}^3+\frac{1889}{648}\,B_{p-9} \pmod{p}.
\end{equation}
\end{corollary}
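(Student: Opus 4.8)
The plan is to treat the three residues separately, the two $\{1\}$‑type sums carrying the difficulty. I would first dispose of $S_{p-1}(1,2,2,2)$: since $(1,2,2,2)=(\{2\}^0,1,\{2\}^3)$, this is exactly the boundary case $a=0$, $b=3$ of Theorem~\ref{T3.3.2}, which gives
\[
S_{p-1}(1,2,2,2)\equiv\frac{4\cdot 3\,(1-4^{-3})}{1\cdot 7}\binom{6}{0}\,B_{p-7}=\frac{27}{16}\,B_{p-7}\pmod p.
\]
Hence the real content of \eqref{eq40} is to show that $S_{p-1}(1,1,1,4)$ has this same residue, and \eqref{eq41} is its weight‑$9$ analogue.

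For the height‑one sums $S_{p-1}(\{1\}^3,4)$ and $S_{p-1}(\{1\}^3,6)$ I would start from Theorem~\ref{t3.2.2} with $(a,b,c)=(3,0,4)$ and $(3,0,6)$; since $\binom{p-1}{k}\equiv(-1)^k\pmod p$, every binomial sum on the right reduces to an ordinary (non‑alternating) multiple harmonic sum of weight $7$, resp.\ $9$. The cleanest bookkeeping, however, is through the $S$–$H$ relation \eqref{SH} applied to ${\bf s}=(m,1,1,1)$: every proper tail of $(m,1,1,1)$ is a run of $1$'s, and since $H_{p-1}(\{1\}^j)\equiv0$ and $H_{p-1}(m)\equiv0\pmod p$ for even $m$, all multi‑block terms drop out and only the single block survives, yielding
\[
S_{p-1}(1,1,1,m)\equiv -H_{p-1}(m,1,1,1)\pmod p,\qquad m=4,6.
\]
The depth‑$\le 3$ sums produced along the way are then eliminated by congruences (i)–(iii).

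The heart of the matter is thus the evaluation modulo $p$ of the depth‑four sums $H_{p-1}(4,1,1,1)$ and $H_{p-1}(6,1,1,1)$, and I expect this to be the main obstacle. The usual relations do not suffice: the stuffle expansion of $H_{p-1}(1)H_{p-1}(m,1,1)$ (whose left side vanishes because $H_{p-1}(1)\equiv0$) together with the reversal relation $H_{p-1}(\overline{{\bf s}})\equiv(-1)^{w}H_{p-1}({\bf s})$ produce only the single congruence
\[
3H_{p-1}(m,1,1,1)+H_{p-1}(1,m,1,1)\equiv -3\,B_{p-7}\ (m=4),\quad \equiv -\tfrac{19}{3}\,B_{p-9}\ (m=6)\pmod p,
\]
which leaves the pair $\{H_{p-1}(m,1,1,1),\,H_{p-1}(1,m,1,1)\}$ under‑determined. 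The way through is to feed the new binomial identities of Section~2 into companion compositions, assembling a linear system that—unlike stuffle plus reversal alone—is determined and forces each depth‑four sum into $\mathbb{Q}\,B_{p-7}$, resp.\ $\mathbb{Q}\,B_{p-9}\oplus\mathbb{Q}\,B_{p-3}^3$; here the alternating congruences (vi)–(viii) and Lemmas~\ref{l0}, \ref{l02} enter. The most delicate point is the weight‑$9$ product term: in the combination $3H_{p-1}(6,1,1,1)+H_{p-1}(1,6,1,1)$ the $B_{p-3}^3$ contributions cancel, so detecting the coefficient $\tfrac1{54}$ of $B_{p-3}^3$ (together with the $\tfrac{1889}{648}B_{p-9}$, whose size reflects the heavy bookkeeping of weight‑$9$ subsums) is exactly what requires the extra, non‑stuffle input. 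Once $H_{p-1}(4,1,1,1)\equiv-\tfrac{27}{16}B_{p-7}$ is secured, \eqref{eq40} follows and agrees with $S_{p-1}(1,2,2,2)$, and \eqref{eq41} is obtained in the same manner.
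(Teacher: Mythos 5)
Your opening steps are sound and match the paper where they overlap: the evaluation of $S_{p-1}(1,2,2,2)$ via Theorem~\ref{T3.3.2} with $(a,b)=(0,3)$ is exactly the paper's argument, the reduction $S_{p-1}(1,1,1,m)\equiv -H_{p-1}(m,1,1,1)\pmod p$ via \eqref{SH} is valid (in every multi-block splitting of $(m,1,1,1)$ the last block is a run of $1$'s, hence kills the product mod $p$), and your stuffle computation $3H_{p-1}(m,1,1,1)+H_{p-1}(1,m,1,1)\equiv -3B_{p-7}$ (for $m=4$), resp.\ $-\tfrac{19}{3}B_{p-9}$ (for $m=6$), checks out against congruence (iii). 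But at precisely the point you yourself flag as the heart of the matter, the argument stops being a proof: ``feed the new binomial identities of Section~2 into companion compositions, assembling a linear system that \dots is determined'' names no identity, writes down no system, and verifies no determinacy. Some input beyond stuffle and reversal is indeed required, but you never exhibit the congruence that supplies it. Moreover, the tools you invoke for this step --- the alternating congruences (vi)--(viii) and Lemmas~\ref{l0}, \ref{l02} --- are ingredients in the proof of Theorem~\ref{T3.3.2} itself; they do not act on the non-alternating depth-four sums $H_{p-1}(4,1,1,1)$ and $H_{p-1}(6,1,1,1)$ that you need to evaluate.

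The paper closes this gap with two concrete inputs that your proposal never identifies. For \eqref{eq40} it does not evaluate $H_{p-1}(4,1,1,1)$ at all: it quotes Hoffman's weight-$7$ relation \cite[Theorem 7.3]{Hof:07}, which gives $S_{p-1}(1,1,1,4)\equiv S_{p-1}(1,2,2,2)\pmod p$ directly, and then the already-established value $\tfrac{27}{16}B_{p-7}$ finishes the claim. For \eqref{eq41} the decisive second equation comes from applying Theorem~\ref{T3.3.2} at a \emph{second} point, $(a,b)=(0,4)$, to a different weight-$9$ composition: $S_{p-1}(1,\{2\}^4)\equiv \tfrac{85}{48}B_{p-9}\pmod p$. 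Hoffman's reduction machinery \cite[Theorems 6.4 and 7.5]{Hof:07} then expresses $2S_{p-1}(1,\{2\}^4)\equiv 3S_{p-1}(1,1,1,6)+\tfrac{1}{3}S_{p-1}(1,2)S_{p-1}(1,1,4)-\tfrac{281}{54}S_{p-1}(1,8)\pmod p$, and solving, with $S_{p-1}(1,2)S_{p-1}(1,1,4)\equiv -\tfrac16 B_{p-3}^3$ and $S_{p-1}(1,8)\equiv B_{p-9}$, yields $S_{p-1}(1,1,1,6)\equiv \tfrac{1}{54}B_{p-3}^3+\tfrac{1889}{648}B_{p-9}$. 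So the missing idea is specific: the independent evaluation closing your under-determined pair must come from the $\{2\}$-string composition $(1,\{2\}^4)$, tied to the height-one target through the known weight-$\le 9$ reductions of \cite{Hof:07}, not from a self-assembled linear system whose solvability you would in any case have to prove. Until that (or an equivalent closing relation) is supplied, the $S_{p-1}(1,1,1,4)$ part of \eqref{eq40} and all of \eqref{eq41} remain unproved in your write-up.
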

\begin{proof}
The first congruence in (\ref{eq40}) follows from \cite[Theorem 7.3]{Hof:07} and the second one is a consequence of Theorem \ref{T3.3.2}
with $a=0,$ $b=3.$ To prove congruence (\ref{eq41}) we set $a=0,$ $b=4$ in Theorem \ref{T3.3.2} to obtain
\begin{equation} \label{eq42}
S_{p-1}(1,\{2\}^4)\equiv \frac{85}{48}\,B_{p-9} \pmod{p}.
\end{equation}
On the other hand, using \cite[Theorem 6.4]{Hof:07} we can rewrite the length 5 sum $S_{p-1}(1,\{2\}^4)$ in terms of length 4 sums
and $S_{p-1}(1,8)$ as
\begin{equation*}
\begin{split}
2S_{p-1}(1,\{2\}^4)\equiv &S_{p-1}(1,2,2,4)+S_{p-1}(1,2,4,2)+S_{p-1}(1,4,2,2) \\
&+S_{p-1}(3,2,2,2)-\frac{255}{18}S_{p-1}(1,8) \pmod{p}.
\end{split}
\end{equation*}
Now applying \cite[Theorem 7.5]{Hof:07} and expressing all length 4 sums in the congruence above in terms of the three quantities
$S_{p-1}(1,1,1,6),$ $S_{p-1}(1,8)\equiv  B_{p-9},$ and $S_{p-1}(1,2)S_{p-1}(1,1,4)\equiv -\frac{1}{6}B_{p-3}^3 \pmod{p}$ we obtain
\begin{equation} \label{eq43}
2S_{p-1}(1,\!\{2\}^4)\equiv 3S_{p-1}(1,1,1,6)+\frac{1}{3}S_{p-1}(1,2)S_{p-1}(1,1,4)-\frac{281}{54}S_{p-1}(1,8) \!\!\!\!\!\pmod{p}.
\end{equation}
Comparing congruences (\ref{eq42}) and (\ref{eq43}) we conclude that
\begin{equation*}
\begin{split}
S_{p-1}(1,1,1,6)&\equiv -\frac{1}{9}S_{p-1}(1,2)S_{p-1}(1,1,4)+\frac{1889}{648}S_{p-1}(1,8) \\[3pt]
&\equiv
\frac{1}{54}\,B_{p-3}^3+\frac{1889}{648}\,B_{p-9} \pmod{p}.
\end{split}
\end{equation*}
\end{proof}
From Corollary \ref{c3}, \cite[Theorems 7.3, 7.5]{Hof:07}, and \cite[Section 2]{Zhao:11} we obtain
the following description of multiple harmonic sums of weight 7 and 9.

\begin{corollary} \label{c4.2}
Let $p$ be a prime greater than $7.$ Then

\begin{itemize}

\item[(i)] all multiple harmonic sums $S_{p-1}({\bf s}),$ $H_{p-1}({\bf s}),$ of weight $|{\bf s}|=7$
belong to  ${\mathbb Q}(2) B_{p-7}$ modulo $p$;

\item[(ii)]  all multiple harmonic sums $S_{p-1}({\bf s}),$ $H_{p-1}({\bf s})$ of weight $|{\bf s}|=9$
can be written modulo $p$ as linear combinations of the two quantities $B_{p-9}$ and $B_{p-3}^3$ with
coefficients in ${\mathbb Q}(3).$
\end{itemize}
\end{corollary}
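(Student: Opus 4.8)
The plan is to read this corollary as a bookkeeping consequence of the reduction theory already in the literature, with Corollary \ref{c3} supplying the one missing collapse of generators. The structural congruences of Hoffman \cite{Hof:07} (and, in weight $9$, the computations of Zhao \cite{Zhao:11}) already express an arbitrary $S_{p-1}({\bf s})$ of fixed weight modulo $p$ as a ${\mathbb Q}$-linear combination of a short list of generators; the work is to identify that list, use Corollary \ref{c3} to reduce it to the stated one, and then track the denominators that occur.

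For part (i) I would start from \cite[Theorems 7.3 and 7.5]{Hof:07}, which reduce every $S_{p-1}({\bf s})$ with $|{\bf s}|=7$ modulo $p$ to a combination of Hoffman's weight-$7$ generators. The crucial simplification is that, at weight $7$, every nontrivial product of lower-weight sums must contain a factor of weight $2$ or $4$, and all weight-$2$ and weight-$4$ sums vanish modulo $p$ (that is, $c_2=c_4=0$); hence no product survives and only ``pure'' generators remain. The first line of Corollary \ref{c3}, $S_{p-1}(1,1,1,4)\equiv S_{p-1}(1,2,2,2)\equiv \frac{27}{16}\,B_{p-7}$, together with $S_{p-1}(1,6)\equiv B_{p-7}$, shows that these pure generators are all rational multiples of $B_{p-7}$. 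Thus $c_7=1$. To obtain the ${\mathbb Q}(2)$ refinement I would follow the denominators through the cited reductions: once $p>7$ the surviving coefficients carry only the prime $2$ in their denominators (as the factor $\frac{27}{16}$ already indicates), so every weight-$7$ sum lies in ${\mathbb Q}(2)\,B_{p-7}$.

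For part (ii) the same scheme runs with the weight-$9$ reductions of \cite[Theorems 7.3, 7.5]{Hof:07} and \cite[Section 2]{Zhao:11}, which reduce every weight-$9$ sum to the three quantities $S_{p-1}(1,8)\equiv B_{p-9}$, $S_{p-1}(1,2)S_{p-1}(1,1,4)\equiv -\frac16\,B_{p-3}^3$, and $S_{p-1}(1,1,1,6)$. Now products do survive, because the partition $9=3+6$ pairs the nonzero factors $S_{p-1}(1,2)\equiv B_{p-3}$ and $S_{p-1}(1,1,4)\equiv -\frac16\,B_{p-3}^2$ into $B_{p-3}^3$. The decisive input is the second line of Corollary \ref{c3}, namely $S_{p-1}(1,1,1,6)\equiv \frac{1}{54}\,B_{p-3}^3+\frac{1889}{648}\,B_{p-9}$: this is exactly the previously conditional congruence \eqref{ccc}, now proved, and it eliminates the third quantity. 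Hence every weight-$9$ sum is a combination of $B_{p-9}$ and $B_{p-3}^3$, so $c_9=2$ unconditionally; tracking denominators (the factors $\frac{1}{54}=\frac{1}{2\cdot 27}$ and $\frac{1889}{648}=\frac{1889}{8\cdot 81}$ involve only the primes $2$ and $3$, and so do the remaining reduction coefficients for $p>9$) yields coefficients in ${\mathbb Q}(3)$.

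In both parts the passage from the $S$-version to the $H$-version is via identity \eqref{SH}: under it each $S_{p-1}({\bf s})$ and $H_{p-1}({\bf s})$ is a ${\mathbb Q}$-linear combination of products of the other of weight at most $|{\bf s}|$, and those products reduce to the very same generators by the weight-$\le 7$ (respectively weight-$\le 9$) theory, so the two descriptions span the same space modulo $p$. The main obstacle is not conceptual --- Corollary \ref{c3} does all the genuinely new work --- but lies in verifying the denominator restrictions ${\mathbb Q}(2)$ and ${\mathbb Q}(3)$: this forces one to revisit each coefficient produced by \cite[Theorems 7.3, 7.5]{Hof:07} and \cite[Section 2]{Zhao:11} and check that, modulo a large prime $p$, no prime exceeding $2$ (respectively $3$) remains in any denominator. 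I expect this routine but lengthy check to be the only delicate point.
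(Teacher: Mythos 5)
Your proposal is correct and takes essentially the same route as the paper, which derives Corollary \ref{c4.2} in precisely this one-line fashion from Corollary \ref{c3} together with the reductions of \cite[Theorems 7.3, 7.5]{Hof:07} and \cite[Section 2]{Zhao:11}, the now-unconditional congruence \eqref{eq41} (formerly \eqref{ccc}) eliminating the third weight-$9$ generator. The details you supply --- the vanishing of products at weight $7$ via $c_2=c_4=0$, the passage between the $S$- and $H$-versions through \eqref{SH}, and the denominator bookkeeping for ${\mathbb Q}(2)$ and ${\mathbb Q}(3)$ --- simply make explicit what the paper delegates to the cited sources.
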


In \cite[Theorem 3.16]{Zhao:06}, Zhao proved that
\begin{equation*}
S_{p-1}(\{1\}^a,2,\{1\}^b)\equiv H_{p-1}(\{1\}^a,2,\{1\}^b)\equiv 0 \pmod{p}
\end{equation*}
if $a+b$ is even and a prime $p>a+b+3.$ In the next theorem we prove a deeper result on the above sums.

\begin{theorem} \label{T3.2}
For non-negative integers $a,b$  and  a prime  $p>a+b+3,$
\begin{align*}
S_{p-1}(\{1\}^a,2,\{1\}^b)\equiv
\begin{cases}
\frac{(-1)^b}{a+b+2}\,\binom{a+b+2}{a+1}B_{p-a-b-2} \quad\quad\,\,
 \pmod{p} &\mbox{\!\!if $a+b$ is odd,}\\[6pt]
\frac{pB_{p-a-b-3}}{2(a+b+3)}\,\left(1+(-1)^a\binom{a+b+3}{a+2}\right)  \!\!\!\!\pmod{p^2} &\mbox{\!\!if $a+b$ is even;}
\end{cases}
\end{align*}
\begin{align*}
H_{p-1}(\{1\}^a,2,\{1\}^b)\equiv
\begin{cases}
\frac{(-1)^b}{a+b+2}\,\binom{a+b+2}{a+1}B_{p-a-b-2} \quad\quad\,\,
 \pmod{p} &\mbox{\!\!if $a+b$ is odd,}\\[6pt]
\frac{pB_{p-a-b-3}}{2(a+b+3)}\,\left(1+(-1)^a\binom{a+b+3}{b+2}\right)  \!\!\!\!\!\pmod{p^2} &\mbox{\!\!if $a+b$ is even.}
\end{cases}
\end{align*}
\end{theorem}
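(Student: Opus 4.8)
My plan is to specialize the identity of Theorem~\ref{t3.2.2} to $c=2$ and $n=p-1$, reduce the two resulting binomial sums to ordinary harmonic sums, and finish with the congruences (i)--(iii) of Section~3. For $c=2$ the inner sum collapses to the single choice $i=j=1$, ${\bf s}=\emptyset$, so that
\[
S_{p-1}(\{1\}^a,2,\{1\}^b)=\sum_{k=1}^{p-1}\frac{(-1)^{k-1}\binom{p-1}{k}}{k^{a+b+2}}
+\sum_{k=1}^{p-1}\frac{H_{k-1}(a+1)(-1)^{k-1}\binom{p-1}{k}}{k^{b+1}}.
\]
The arithmetic engine is $\binom{p-1}{k}=(-1)^k\prod_{j=1}^k(1-p/j)$, which gives $\binom{p-1}{k}\equiv(-1)^k\pmod p$ and, more precisely, $\binom{p-1}{k}\equiv(-1)^k(1-pH_k(1))\pmod{p^2}$.

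For $a+b$ odd I would work modulo $p$. Replacing $\binom{p-1}{k}$ by $(-1)^k$ turns the display into $-H_{p-1}(a+b+2)-H_{p-1}(a+1,b+1)$. Since the weight $a+b+2$ is odd, $H_{p-1}(a+b+2)\equiv0\pmod p$ by (i), while (ii) evaluates $H_{p-1}(a+1,b+1)$; together these yield the stated formula immediately.

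For $a+b$ even I would instead work modulo $p^2$ using $\binom{p-1}{k}\equiv(-1)^k(1-pH_k(1))$. Writing $H_k(1)=H_{k-1}(1)+1/k$ and applying the stuffle relation $H_{k-1}(a+1)H_{k-1}(1)=H_{k-1}(a+1,1)+H_{k-1}(1,a+1)+H_{k-1}(a+2)$ expresses $S_{p-1}(\{1\}^a,2,\{1\}^b)$ as a fixed linear combination of $-H_{p-1}(a+b+2)$ and $-H_{p-1}(a+1,b+1)$ (both needed modulo $p^2$, via the second parts of (i) and (ii)) together with $p$ times the sums $H_{p-1}(1,a+b+2)$, $H_{p-1}(a+b+3)$, $H_{p-1}(a+1,b+2)$, $H_{p-1}(a+2,b+1)$, $H_{p-1}(a+1,1,b+1)$ and $H_{p-1}(1,a+1,b+1)$ (needed only modulo $p$, via (ii) and (iii)). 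Substituting these evaluations, setting $W=a+b+3$, and collecting terms, the numerical parts combine to $1$ and the coefficient of $pB_{p-W}/(2W)$ reduces to $1+(-1)^a\binom{W}{a+2}$ after using $\binom{W}{b+1}=\binom{W}{a+2}$ and the elementary relation $(b+2)\binom{W}{a+1}=(a+2)\binom{W}{a+2}$. This bookkeeping is the longest, though entirely routine, step.

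Finally, for the $H$-version I would pass from $S$ to $H$ through Hoffman's relation \eqref{SH} applied to ${\bf s}=(\{1\}^a,2,\{1\}^b)$: its left side is $(-1)^{a+b+1}S_{p-1}(\{1\}^b,2,\{1\}^a)$ and its length-one term is $-H_{p-1}(\{1\}^a,2,\{1\}^b)$. In every higher-length term exactly one block carries the single entry $2$, all others being pure strings of $1$'s; since $H_{p-1}(\{1\}^m)\equiv0\pmod p$, these products vanish modulo $p$, which settles the odd case, giving $H_{p-1}(\{1\}^a,2,\{1\}^b)\equiv(-1)^{a+b}S_{p-1}(\{1\}^b,2,\{1\}^a)$. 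The genuinely delicate point, and the main obstacle, is the even case, where these products must vanish modulo $p^2$. Here I would invoke the sharper fact that $H_{p-1}(\{1\}^m)\equiv0\pmod{p^2}$ for odd $m$ and that $H_{p-1}(\{1\}^m)$ is divisible by $p$ for even $m$; this follows from $\sum_{j\ge0}H_{p-1}(\{1\}^j)x^j=\binom{x+p-1}{p-1}$ together with $\log\binom{x+p-1}{p-1}\equiv-\sum_{i\ \mathrm{even}}\frac{H_{p-1}(i)}{i}\,x^i\pmod{p^2}$, which kills all odd-degree coefficients. Only length-two decompositions can survive modulo $p^2$ (a longer one carries at least two pure-$1$ blocks, hence a factor $p^2$), and in such a decomposition one block is $\{1\}^m$ and the other is $(\{1\}^{a-m},2,\{1\}^b)$ or its mirror: if $m$ is odd the first factor is already $0$ modulo $p^2$, while if $m$ is even the complementary block has parameters of even total and is itself divisible by $p$, so the product is divisible by $p^2$ in either case. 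Hence $H_{p-1}(\{1\}^a,2,\{1\}^b)\equiv(-1)^{a+b}S_{p-1}(\{1\}^b,2,\{1\}^a)\pmod{p^2}$, and substituting the already-proved $S$-congruence with $a$ and $b$ interchanged replaces $\binom{W}{a+2}$ by $\binom{W}{b+2}$, exactly as claimed.
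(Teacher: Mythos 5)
Your proposal is correct and follows essentially the same route as the paper: the $c=2$ case of Theorem~\ref{t3.2.2} at $n=p-1$, the expansion $\binom{p-1}{k}\equiv(-1)^k(1-pH_k(1))\pmod{p^2}$, reduction to the congruences (i)--(iii), and the transfer to the $H$-version via \eqref{SH} with exactly the paper's parity analysis of the two-block products (odd blocks $\{1\}^m$ killed modulo $p^2$, even blocks modulo $p$ paired with the vanishing of the complementary sum). The only deviation is that you re-derive $H_{p-1}(\{1\}^m)\equiv 0\pmod{p^2}$ for odd $m$ (and modulo $p$ for even $m$) through the generating function $\binom{x+p-1}{p-1}$, where the paper simply cites congruence (i) with $a=1$; this is a harmless substitution, not a different method.
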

\begin{proof}
From Theorem \ref{t3.2.2} for $c=2$ we have
\begin{equation*}
S_n(\{1\}^a,2,\{1\}^b)=\sum_{k=1}^n\frac{(-1)^{k-1}\binom{n}{k}}{k^{a+b+2}}+\sum_{k=1}^n\frac{H_{k-1}(a+1)(-1)^{k-1}\binom{n}{k}}{k^{b+1}}.
\end{equation*}
Setting $n=p-1$ and observing that
\begin{equation*}
\binom{p-1}{k}=\frac{(p-1)(p-2)\cdots(p-k)}{k!}\equiv (-1)^k(1-pH_k(1)) \pmod{p^2}
\end{equation*}
we have
\begin{equation} \label{eq29}
\begin{split}
S_{p-1}&(\{1\}^a,2,\{1\}^b)\equiv -\sum_{k=1}^{p-1}\frac{1-pH_{k}(1)}{k^{a+b+2}}-\sum_{k=1}^{p-1}\frac{H_{k-1}(a+1)(1-pH_k(1))}{k^{b+1}}\\
&=-H_{p-1}(a+b+2)+pH_{p-1}(1,a+b+2)+pH_{p-1}(a+b+3) \\[3pt]
&-H_{p-1}(a+1,b+1)+pH_{p-1}(a+1,b+2)+pH_{p-1}(a+1,1,b+1) \\[2pt]
&+pH_{p-1}(1,a+1,b+1)
+pH_{p-1}(a+2,b+1) \pmod{p^2}.
\end{split}
\end{equation}
If $a+b$ is odd, then by (ii) we obtain
\begin{equation*}
S_{p-1}(\{1\}^a,2,\{1\}^b)\!\equiv\! -H_{p-1}(a+1,b+1)\!\equiv\!\frac{(-1)^b}{a+b+2}\binom{a+b+2}{a+1} B_{p-a-b-2} \!\!\! \pmod{p}.
\end{equation*}
If $a+b$ is even, then from (\ref{eq29}), (i)--(iii) after simplifying we get the required congruence.

Considering equality (\ref{SH}) with $n=p-1,$  ${\bf s}=(\{1\}^a,2,\{1\}^b)$ modulo $p$ we find
\begin{equation*}
(-1)^{a+b+1}S_{p-1}(\{1\}^b,2,\{1\}^a)\equiv -H_{p-1}(\{1\}^a,2,\{1\}^b) \pmod{p}
\end{equation*}
and therefore
\begin{align} \label{eq30}
H_{p-1}(\{1\}^a,2,\{1\}^b)\equiv
\begin{cases}
-S_{p-1}(\{1\}^b,2,\{1\}^a)
 \pmod{p} &\mbox{if $a+b$ is odd,}\\[4pt]
\quad\; 0\qquad \qquad \qquad \quad \pmod{p} &\mbox{if $a+b$ is even.}
\end{cases}
\end{align}
Similarly, from identity (\ref{SH}) considered modulo $p^2$ we obtain
\begin{equation} \label{eq31}
\begin{split}
(-1)^{a+b}S_{p-1}&(\{1\}^b,2,\{1\}^a) \\
&\equiv H_{p-1}(\{1\}^a,2,\{1\}^b)-
\!\!\!\!\!\!\!\!\underset{(\{1\}^a,2,\{1\}^b)}{\prod_{{\bf s}_1\bigsqcup{\bf s}_2=}}\!\!\!\!\!\!\!
H_{p-1}({\bf s}_1)H_{p-1}({\bf s}_2) \pmod{p^2}.
\end{split}
\end{equation}
It is clear that the product $H_{p-1}({\bf s}_1)H_{p-1}({\bf s}_2)$ has one of the forms
\begin{equation*}
H_{p-1}(\{1\}^m)H_{p-1}(\{1\}^{a-m},2,\{1\}^b), \qquad 1\le m\le a,
\end{equation*}
or
\begin{equation*}
H_{p-1}(\{1\}^a,2,\{1\}^l)H_{p-1}(\{1\}^{b-l}), \qquad 0\le l\le b.
\end{equation*}
If $a+b$ is even, then by (i) and (\ref{eq30}), it easily follows that both products are congruent to zero modulo $p^2.$
Thus in this case from (\ref{eq31}) we obtain
\begin{equation*}
S_{p-1}(\{1\}^b,2,\{1\}^a)\equiv H_{p-1}(\{1\}^a,2,\{1\}^b) \pmod{p^2}
\end{equation*}
and the proof is complete.
\end{proof}

\begin{remark}
It is worth mentioning that there is a notion of duality for multiple harmonic sums:
given ${\bf s}=(s_1,s_2,\dots,s_r)$ we define the power set
$P({\bf s})$ to be the
partial sum sequence $(s_1, s_1+s_2,\dots,s_1+\dots + s_{r-1})$ as a subset of
$\{1, 2, \dots, |{\bf s}|-1\}$.
Then ${\bf s}^{*}$ is the composition of weight $|{\bf s}|$ corresponding
to the complement subset of $P({\bf s})$ in $\{1, 2, \dots, |{\bf s}|-1\}$, namely,
\begin{equation*}
{\bf s}^{*}=P^{-1}(\{1, 2, \dots, |{\bf s}|-1\}\setminus P({\bf s})).
\end{equation*}
It is easy to verify that $({\bf s}^{*})^{*}={\bf s}$.
Moreover, by \cite[Theorem 6.7]{Ho:03} we have that
\begin{equation*}
S_{p-1}({\bf s})\equiv -S_{p-1}({\bf s}^{*}) \pmod{p}
\end{equation*}
for any prime $p$.
In particular, for any positive integers $a,b$ it follows that
\begin{align*}
S_{p-1}(\{2\}^a,1,\{2\}^b) &\equiv -S_{p-1}(1,\{2\}^{a-1},3,\{2\}^{b-1},1)\pmod{p},\\
S_{p-1}(\{2\}^a,3,\{2\}^b)&\equiv -S_{p-1}(1,\{2\}^{a},1,\{2\}^{b},1) \pmod{p},\\
S_{p-1}(\{1\}^a,2,\{1\}^b)&\equiv -S_{p-1}(a+1,b+1) \pmod{p},
\end{align*}
and by Theorem \ref{T3.3.1} and Theorem \ref{T3.3.2} we are able to find $S_{p-1}(1,\{2\}^{a},1,\{2\}^{b},1)$ and
 $S_{p-1}(1,\{2\}^{a-1},3,\{2\}^{b-1},1)$  modulo a prime $p$ in terms of Bernoulli numbers.
\end{remark}

\section{Congruences for ``odd'' multiple harmonic sums}
\label{S5}

In this section we  prove some congruences involving ``odd'' multiple harmonic sums. These results will be needed in the next sections.

\begin{lemma} \label{l01}
Let ${\bf s}\in {\mathbb N}^r.$ Then for any prime $p>3,$
\begin{equation*}
\HH_{\frac{p-1}{2}}({\bf s})\equiv \frac{(-1)^{|{\bf s}|}}{2^{|{\bf s}|}} H_{\frac{p-1}{2}}(\overline{\bf s}) \pmod{p}.
\end{equation*}
\end{lemma}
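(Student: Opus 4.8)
The plan is to reduce the ``odd'' sum to an ordinary multiple harmonic sum by a single reflection of the index set modulo $p$, exploiting the fact that the denominators $2k_i+1$ occurring in $\HH_{\frac{p-1}{2}}$ run exactly over the odd residues $1,3,\dots,p-2$. Since every entry of ${\bf s}\in\NN^r$ is positive, each $\textup{sgn}(s_i)=1$ and no signs survive in either sum, so I only need to keep track of the denominators.

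First I would record the elementary congruence that does all the work: because $p$ is odd, for $0\le k<\frac{p-1}{2}$ we have $p-(2k+1)=2\bigl(\tfrac{p-1}{2}-k\bigr)$, hence
\begin{equation*}
2k+1\equiv -2\Bigl(\tfrac{p-1}{2}-k\Bigr)\pmod p.
\end{equation*}
Setting $j:=\tfrac{p-1}{2}-k$, this turns $\frac{1}{(2k+1)^{s}}$ into $\frac{(-1)^{s}}{2^{s}}\cdot\frac{1}{j^{s}}$ modulo $p$ (using that $2$ is invertible for $p>3$), and as $k$ runs through $0,1,\dots,\frac{p-3}{2}$ the new index $j$ runs through $\frac{p-1}{2},\dots,1$.

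Next I would apply this substitution to every factor in
\begin{equation*}
\HH_{\frac{p-1}{2}}(s_1,\dots,s_r)=\sum_{0\le k_1<\dots<k_r<\frac{p-1}{2}}\prod_{i=1}^r\frac{1}{(2k_i+1)^{s_i}}.
\end{equation*}
Writing $j_i=\tfrac{p-1}{2}-k_i$, the sign and power-of-two factors collect globally to $\frac{(-1)^{|{\bf s}|}}{2^{|{\bf s}|}}$, while the chain $0\le k_1<\dots<k_r<\frac{p-1}{2}$ becomes the reversed chain $\frac{p-1}{2}\ge j_1>\dots>j_r\ge 1$. Relabelling the $j_i$ in increasing order reverses the exponent string, since the smallest surviving denominator now carries the exponent $s_r$ and the largest carries $s_1$; this is precisely the passage from ${\bf s}$ to $\overline{\bf s}=(s_r,\dots,s_1)$. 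One thus arrives at
\begin{equation*}
\HH_{\frac{p-1}{2}}({\bf s})\equiv \frac{(-1)^{|{\bf s}|}}{2^{|{\bf s}|}}\sum_{1\le m_1<\dots<m_r\le\frac{p-1}{2}}\prod_{t=1}^r\frac{1}{m_t^{s_{r+1-t}}}
=\frac{(-1)^{|{\bf s}|}}{2^{|{\bf s}|}}\,H_{\frac{p-1}{2}}(\overline{\bf s})\pmod p.
\end{equation*}

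The only delicate point is the bookkeeping in this last step: one must verify that reversing the order of the summation indices $k_1<\dots<k_r$ forces the exponents to be read in the opposite order, which is exactly what produces $\overline{\bf s}$ rather than ${\bf s}$ itself. There is no arithmetic obstacle beyond invertibility of $2$ modulo $p>3$; the statement is fundamentally a clean change of variables on the index set, driven by the reflection $2k+1\mapsto -2\bigl(\tfrac{p-1}{2}-k\bigr)$.
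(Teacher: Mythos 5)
Your proof is correct and is essentially the paper's own argument: the paper performs the very same reflection, via the substitution $k_i\mapsto \frac{p-1}{2}-1-k_i$ so that $2\bigl(\frac{p-1}{2}-1-k_i\bigr)+1\equiv -2(k_i+1)\pmod{p}$, which coincides with your change of variables $j_i=\frac{p-1}{2}-k_i$ up to a shift of the index by one. The sign factor $(-1)^{|{\bf s}|}/2^{|{\bf s}|}$ and the reversal of the exponent string producing $\overline{\bf s}$ are handled identically in both arguments.
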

\begin{proof} Changing the order of summation in the definition of $\HH_{n}({\bf s}),$ with $n=\frac{p-1}{2},$ we obtain
\begin{equation*}
\begin{split}
\HH_{n}(s_1, s_2, \ldots, s_r)&=\sum_{0\le k_1<\ldots<k_r<n}\frac{1}{(2k_1+1)^{s_1}\cdots (2k_r+1)^{s_r}}\\
&=\sum_{n>k_1>\ldots>k_r\ge 0}\frac{1}{(2(n-1-k_1)+1)^{s_1}\cdots (2(n-1-k_r)+1)^{s_r}}\\
&\equiv \sum_{0\le k_r<\ldots<k_1\le n-1}\frac{(-1)^{s_1+\cdots+s_r}}{2^{s_1+\cdots+s_r}(k_1+1)^{s_1}\cdots(k_r+1)^{s_r}}\\
&=\frac{(-1)^{|{\bf s}|}}{2^{|{\bf s}|}}H_n(s_r,\ldots,s_1) \pmod{p}.
\end{split}
\end{equation*}
\end{proof}

\begin{theorem} \label{t3.1}
For any positive integer $m$ and any prime $p>2m+1,$ we have
\begin{align}
\HH_{\frac{p-1}{2}}(2m)&\equiv \frac{m}{4^m(2m+1)}\,p\,B_{p-2m-1} \pmod{p^2}, \label{eq10}\\
\HH_{\frac{p-1}{2}}(\{2\}^m)&\equiv \frac{(-1)^{m-1}}{4^m (2m+1)}\,p\,B_{p-2m-1} \pmod{p^2} \label{eq11}.
\end{align}
\end{theorem}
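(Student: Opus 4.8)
The plan is to treat the two congruences separately, deriving the first from known single-sum results and then bootstrapping the second from the first via a symmetric-function identity.

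\emph{First congruence.} First I would split the ``odd'' power sum into the full harmonic sum minus its even part. Since $\HH_{\frac{p-1}{2}}(2m)$ is exactly $\sum_j 1/j^{2m}$ over the odd $j$ in $[1,p-1]$, pulling out the even terms $j=2l$ gives
\[
\HH_{\frac{p-1}{2}}(2m)=H_{p-1}(2m)-\frac1{4^m}H_{\frac{p-1}{2}}(2m).
\]
Into this I would substitute (i) with $r=1$, $a=2m$, namely $H_{p-1}(2m)\equiv\frac{2m}{2m+1}\,pB_{p-2m-1}\pmod{p^2}$, and (iv) for the even index $2m$, namely $H_{\frac{p-1}{2}}(2m)\equiv\frac{2m(2^{2m+1}-1)}{2(2m+1)}\,pB_{p-2m-1}\pmod{p^2}$. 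Collecting the coefficient of $pB_{p-2m-1}$ and using $2-(2^{2m+1}-1)/4^m=4^{-m}$ collapses it to $\frac{m}{4^m(2m+1)}$, which is \eqref{eq10}.

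\emph{Second congruence.} The key observation is that $\HH_{\frac{p-1}{2}}(\{2\}^m)$ is the $m$-th elementary symmetric function $e_m$ of the $(p-1)/2$ variables $x_k=(2k+1)^{-2}$, while the associated power sums are precisely $P_j:=\sum_k x_k^{\,j}=\HH_{\frac{p-1}{2}}(2j)$. Thus Newton's identities give $m\,e_m=\sum_{j=1}^m(-1)^{j-1}e_{m-j}P_j$. By \eqref{eq10} each $P_j$ is divisible by $p$; arguing by induction on $m$ (the recursion read modulo $p$ forces $e_k\equiv0\pmod p$ for every $1\le k<m$), every cross term $e_{m-j}P_j$ with $1\le j<m$ then vanishes modulo $p^2$. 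Only the $j=m$ term survives, leaving $m\,e_m\equiv(-1)^{m-1}P_m\pmod{p^2}$, i.e. $\HH_{\frac{p-1}{2}}(\{2\}^m)\equiv\frac{(-1)^{m-1}}{m}\HH_{\frac{p-1}{2}}(2m)\pmod{p^2}$. Feeding in \eqref{eq10} yields $\frac{(-1)^{m-1}}{m}\cdot\frac{m}{4^m(2m+1)}\,pB_{p-2m-1}$, which is \eqref{eq11}.

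The routine part is the first congruence, being pure substitution and arithmetic simplification. The point requiring care — and the reason Lemma \ref{l01} cannot be used directly — is that that lemma only evaluates $\HH_{\frac{p-1}{2}}(\{2\}^m)$ modulo $p$, whereas we need it modulo $p^2$. The Newton-identity route circumvents this: it converts the mod-$p$ divisibility of the power sums (a consequence of the first congruence) into the required mod-$p^2$ cancellation of all but one term, so no mod-$p^2$ refinement of Lemma \ref{l01} is ever needed. The main thing to verify carefully is therefore that the induction closes at the $p^2$ level, which reduces to checking $e_k\equiv0\pmod p$ for $1\le k<m$; this in turn follows from the very same recursion read modulo $p$, since there every term carries a factor $P_j\equiv0\pmod p$.
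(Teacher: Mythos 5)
Your proposal is correct and is essentially the paper's own proof: your even--odd splitting of $H_{p-1}(2m)$ is exactly the identity $\HH_n(2m)=H_{2n}(2m)-H_n(2m)/2^{2m}$ the paper quotes from \cite[Lemma 2]{HHT:11}, and your Newton's identity for the elementary symmetric functions of $x_k=(2k+1)^{-2}$ is precisely the stuffle relation \eqref{eq12} used there, with the same induction showing all cross terms $e_{m-j}P_j$, $1\le j<m$, vanish modulo $p^2$. The only cosmetic difference is that you derive both ingredients from scratch rather than citing them, and you observe (correctly) that $e_k\equiv 0\pmod p$ already follows from the recursion read modulo $p$.
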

\begin{proof} Let $n=(p-1)/2.$ It is easy to deduce (see \cite[Lemma 2]{HHT:11}) that
\begin{equation*}
\HH_n(2m)=H_{2n}(2m)-\frac{H_n(2m)}{2^{2m}},
\end{equation*}
which by (i) and (iv) implies the first congruence.

From \cite[Section 2.4, Lemma 2.12]{Zhao:06} it follows that the ``odd'' multiple harmonic sum $\HH_n$
as well as $H_n$ satisfies the following stuffle relation:
\begin{equation} \label{eq12}
m\HH_n(\{2\}^m)=\sum_{k=1}^m(-1)^{k-1}\HH_n(2k)\cdot \HH_n(\{2\}^{m-k}).
\end{equation}
Now the  congruence (\ref{eq11}) easily follows from the above identity and (\ref{eq10}) by induction on $m.$
Indeed, for $m=1$ it follows from (\ref{eq10}), and if $m>1,$ then by (\ref{eq12}) and induction hypothesis, we have
\begin{equation*}
m\HH_n(\{2\}^m)\equiv (-1)^{m-1}\HH_n(2m) \pmod{p^2},
\end{equation*}
and the theorem is proved.
\end{proof}

\begin{theorem} \label{t4}
Let $a, b$ be non-negative integers and a prime $p>2a+2b+3.$ Then
\begin{align*}
S_{\frac{p-1}{2}}(\{2\}^a,3,\{2\}^b)&\equiv -\frac{B_{p-2a-2b-3}}{b+1}\binom{2a+2b+2}{2a+1} \pmod{p},\\
H_{\frac{p-1}{2}}(\{2\}^a,3,\{2\}^b)&\equiv (-1)^{a+b+1}\frac{B_{p-2a-2b-3}}{a+1}\binom{2a+2b+2}{2a+1} \pmod{p}.
\end{align*}
\end{theorem}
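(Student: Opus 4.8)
\emph{The plan.} For the first (non-strict) congruence I would start from the reduction already established in \eqref{S2a32b},
\begin{equation*}
S_n(\{2\}^a,3,\{2\}^b)=2\sum_{k=1}^n\frac{(-1)^{k-1}\binom{n}{k}}{k^{2(a+b)+3}\binom{n+k}{k}}+4\sum_{k=1}^n\frac{H_{k-1}(2a+1)(-1)^{k-1}\binom{n}{k}}{k^{2b+2}\binom{n+k}{k}},
\end{equation*}
and specialize $n=\frac{p-1}{2}$. The engine of the whole argument is the elementary congruence
\begin{equation*}
\frac{\binom{n}{k}}{\binom{n+k}{k}}=\prod_{j=1}^k\frac{n-j+1}{n+j}\equiv(-1)^k\pmod p,\qquad n=\tfrac{p-1}{2},
\end{equation*}
which holds because $2n\equiv-1$, so that each factor $\frac{n-j+1}{n+j}\equiv\frac{1-2j}{2j-1}\equiv-1\pmod p$. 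Substituting this, and using $(-1)^{k-1}(-1)^k=-1$, collapses both binomial sums into ordinary ``half'' harmonic sums: the right-hand side becomes $-2H_{\frac{p-1}{2}}(2a+2b+3)-4H_{\frac{p-1}{2}}(2a+1,2b+2)\pmod p$.

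Now $2a+2b+3$ is odd and exceeds $1$, so (iv) gives $H_{\frac{p-1}{2}}(2a+2b+3)\equiv-\frac{2^{2a+2b+3}-2}{2a+2b+3}B_{p-2a-2b-3}$, while $(2a+1)+(2b+2)$ is odd, so (v) gives $H_{\frac{p-1}{2}}(2a+1,2b+2)\equiv\frac{B_{p-2a-2b-3}}{2(2a+2b+3)}\bigl(\binom{2a+2b+3}{2a+1}+2^{2a+2b+3}-2\bigr)$. Adding the two contributions, the terms $2^{2a+2b+3}-2$ cancel exactly, leaving $-\frac{2}{2a+2b+3}\binom{2a+2b+3}{2a+1}B_{p-2a-2b-3}$. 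The closed form stated in the theorem then follows from the routine identity $\frac{2}{2a+2b+3}\binom{2a+2b+3}{2a+1}=\frac{1}{b+1}\binom{2a+2b+2}{2a+1}$.

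For the strict ($H$) version I would pass from $S$ to $H$ through the concatenation relation \eqref{SH} applied to ${\bf s}=(\{2\}^b,3,\{2\}^a)$, exactly as in the proof of Theorem~\ref{T3.3.1}: every block in the right-hand side either contains the single entry $3$ or is a pure string $(\{2\}^i)$. The decisive point — and the step I expect to need the most care — is that at $n=\frac{p-1}{2}$ one still has $H_{\frac{p-1}{2}}(\{2\}^i)\equiv0\pmod p$. This is \emph{not} the Wolstenholme vanishing available for $n=p-1$; instead I would deduce it from Theorem~\ref{t3.1} together with Lemma~\ref{l01}, which give $H_{\frac{p-1}{2}}(\{2\}^i)\equiv 4^i\,\HH_{\frac{p-1}{2}}(\{2\}^i)\equiv0\pmod p$. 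Granting this, all multi-block terms die modulo $p$ and \eqref{SH} reduces to $H_{\frac{p-1}{2}}(\{2\}^b,3,\{2\}^a)\equiv(-1)^{a+b}S_{\frac{p-1}{2}}(\{2\}^a,3,\{2\}^b)$. Swapping $a\leftrightarrow b$, inserting the first part of the theorem, and using the symmetry $\binom{2a+2b+2}{2b+1}=\binom{2a+2b+2}{2a+1}$ then yields the claimed formula with sign $(-1)^{a+b+1}$ and factor $1/(a+1)$.
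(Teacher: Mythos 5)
Your proposal is correct and follows essentially the same route as the paper's own proof: the specialization $n=\frac{p-1}{2}$ in \eqref{S2a32b} with $\binom{n}{k}/\binom{n+k}{k}\equiv(-1)^k\pmod p$, the evaluation via (iv) and (v) with the cancellation of the $2^{2a+2b+3}-2$ terms, and the passage to the $H$-version through \eqref{SH} using $H_{\frac{p-1}{2}}(\{2\}^i)\equiv 4^i\,\HH_{\frac{p-1}{2}}(\{2\}^i)\equiv 0\pmod p$ from Lemma~\ref{l01} and Theorem~\ref{t3.1}. The step you flagged as delicate is exactly the one the paper handles the same way, so no gaps remain.
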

\begin{proof}
Setting $n=\frac{p-1}{2}$ in identity (\ref{S2a32b}) and observing that
\begin{equation} \label{k1}
\frac{\binom{n}{k}}{\binom{n+k}{k}}=(-1)^k
\frac{\bigl(\frac{1}{2}-\frac{p}{2}\bigr)\bigl(\frac{3}{2}-\frac{p}{2}\bigr)\cdots\bigl(\frac{2k-1}{2}-\frac{p}{2}\bigr)}%
{\bigl(\frac{1}{2}+\frac{p}{2}\bigr)\bigl(\frac{3}{2}+\frac{p}{2}\bigr)\cdots\bigl(\frac{2k-1}{2}+\frac{p}{2}\bigr)}\equiv (-1)^k \pmod{p}
\end{equation}
we obtain
\begin{equation*}
S_{\frac{p-1}{2}}(\{2\}^a,3,\{2\}^b)\equiv -2H_{\frac{p-1}{2}}(2a+2b+3)-4H_{\frac{p-1}{2}}(2a+1,2b+2) \pmod{p}.
\end{equation*}
Now the required congruence easily follows by (iv) and (v). From Lemma \ref{l01} and Theorem \ref{t3.1} it follows that
\begin{equation*}
H_{\frac{p-1}{2}}(\{2\}^m)\equiv 2^{2m}\,\HH_{\frac{p-1}{2}}(\{2\}^m)\equiv 0 \pmod{p}.
\end{equation*}
Therefore applying identity (\ref{SH}) with $n=\frac{p-1}{2}$ and ${\bf s}=(\{2\}^a,3,\{2\}^b)$ we obtain
\begin{equation*}
(-1)^{a+b+1}S_{\frac{p-1}{2}}(\{2\}^b,3,\{2\}^a)\equiv -H_{\frac{p-1}{2}}(\{2\}^a,3,\{2\}^b) \pmod{p}
\end{equation*}
and the theorem is proved.
\end{proof}

\begin{theorem} \label{t5}
Let $a, b$ be non-negative integers, not both zero, and a prime $p>2a+2b+1.$ Then
\begin{align*}
S_{\frac{p-1}{2}}(\{2\}^a,1,\{2\}^b)&\equiv \frac{2^{1-2a-2b}-2}{2b+1}\binom{2a+2b}{2a}B_{p-2a-2b-1} \pmod{p},\\
H_{\frac{p-1}{2}}(\{2\}^a,1,\{2\}^b)&\equiv \frac{(-1)^{a+b}(2^{1-2a-2b}-2)}{2a+1}\binom{2a+2b}{2a}B_{p-2a-2b-1} \pmod{p}.
\end{align*}
\end{theorem}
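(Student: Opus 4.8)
The plan is to follow the strategy used for Theorem~\ref{t4}, replacing the central entry $3$ by $1$ and using the identities of Theorem~\ref{T3.2.2} in place of (\ref{S2a32b}). First I treat the main case $a\ge 1$. Setting $n=\frac{p-1}{2}$ in identity (\ref{eq35}) and applying the congruence (\ref{k1}), namely $\binom{n}{k}/\binom{n+k}{k}\equiv(-1)^k\pmod p$, collapses the binomial quotients: the first sum on the right of (\ref{eq35}) becomes $-2H_{\frac{p-1}{2}}(2a+2b+1)$, while in the second sum one recognizes $\sum_{k=1}^n H_{k-1}(-2a)(-1)^k/k^{2b+1}$ as the alternating double sum $H_{\frac{p-1}{2}}(-2a,-2b-1)$, so that it contributes $-4H_{\frac{p-1}{2}}(-2a,-2b-1)$. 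Thus
\[
S_{\frac{p-1}{2}}(\{2\}^a,1,\{2\}^b)\equiv -2H_{\frac{p-1}{2}}(2a+2b+1)-4H_{\frac{p-1}{2}}(-2a,-2b-1)\pmod p,
\]
and, unlike the full-range Theorem~\ref{T3.3.2}, no $p^2$-refinement of (\ref{eq26}) is required, so no extra cross terms appear.

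Next I evaluate the two resulting sums using the auxiliary congruences (i)--(viii). Since $2a+2b+1$ is odd and larger than $1$, congruence (iv) gives $H_{\frac{p-1}{2}}(2a+2b+1)$, and Lemma~\ref{l02} gives $H_{\frac{p-1}{2}}(-2a,-2b-1)$ (the boundary value $b=0$ being covered by the same computation, where $\binom{2a+1}{2a}=2a+1$). Substituting, I obtain a multiple of $B_{p-2a-2b-1}$ whose coefficient involves both $\binom{2a+2b}{2a}$ and $\binom{2a+2b+1}{2a}$. I expect the delicate but routine step to be the ensuing simplification: via $\binom{2a+2b+1}{2a}=\frac{2a+2b+1}{2b+1}\binom{2a+2b}{2a}$ the two terms free of a binomial coefficient cancel exactly, and after rewriting $\frac{2^{2a+2b}-1}{2^{2a+2b-1}}=2-2^{1-2a-2b}$ the stated coefficient $\frac{2^{1-2a-2b}-2}{2b+1}\binom{2a+2b}{2a}$ emerges.

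For the remaining case $a=0$ (hence $b\ge 1$) I start instead from identity (\ref{eq34}); the same use of (\ref{k1}) yields $S_{\frac{p-1}{2}}(1,\{2\}^b)\equiv 2H_{\frac{p-1}{2}}(-2b-1)\pmod p$. A short reflection argument, substituting $k\mapsto p-k$ in the tail of $H_{p-1}(-2b-1)$ and using that $p$ is odd, shows $H_{p-1}(-2b-1)\equiv 2H_{\frac{p-1}{2}}(-2b-1)\pmod p$; then (vi) applied to the odd index $2b+1>1$ gives $H_{p-1}(-2b-1)\equiv -\frac{2(1-2^{-2b})}{2b+1}B_{p-2b-1}$, which reproduces the claimed formula with $\binom{2b}{0}=1$.

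Finally, the congruence for the $H$-version follows from the $S$-$H$ relation (\ref{SH}) exactly as in Theorem~\ref{t4}. Taking $n=\frac{p-1}{2}$ and ${\bf s}=(\{2\}^b,1,\{2\}^a)$, every term of the right-hand sum with $l\ge 2$ contains a block $(\{2\}^c)$ for which $H_{\frac{p-1}{2}}(\{2\}^c)\equiv 0\pmod p$ by Lemma~\ref{l01} and Theorem~\ref{t3.1}, so only the $l=1$ term survives and $H_{\frac{p-1}{2}}(\{2\}^a,1,\{2\}^b)\equiv(-1)^{a+b}S_{\frac{p-1}{2}}(\{2\}^b,1,\{2\}^a)\pmod p$. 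Substituting the $S$-formula with $a$ and $b$ interchanged and using $\binom{2a+2b}{2b}=\binom{2a+2b}{2a}$ yields the desired expression, now with the factor $\frac{1}{2a+1}$.
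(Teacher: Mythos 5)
Your proposal is correct and takes essentially the same route as the paper's own proof: identity (\ref{eq35}) together with (\ref{k1}) to reduce the case $a\ge 1$ to $-2H_{\frac{p-1}{2}}(2a+2b+1)-4H_{\frac{p-1}{2}}(-2a,-2b-1)$, then (iv) and Lemma \ref{l02}; identity (\ref{eq34}) with the reflection argument for $a=0$; and relation (\ref{SH}) with the vanishing of $H_{\frac{p-1}{2}}(\{2\}^c)$ modulo $p$ for the $H$-version. Your explicit treatment of the $b=0$ boundary in Lemma \ref{l02} and of the final binomial simplification via $\binom{2a+2b+1}{2a}=\frac{2a+2b+1}{2b+1}\binom{2a+2b}{2a}$ simply makes precise steps the paper leaves to the reader.
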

\begin{proof} If $a\ge 1,$ then from identity (\ref{eq35}) with $n=(p-1)/2$ and congruence (\ref{k1}) we  find
\begin{equation*}
S_{\frac{p-1}{2}}(\{2\}^a,1,\{2\}^b)\equiv -2H_{\frac{p-1}{2}}(2a+2b+1)-4H_{\frac{p-1}{2}}(-2a,-2b-1) \pmod{p}.
\end{equation*}
Now the required result easily follows by (iv) and Lemma \ref{l02}. If $a=0,$ then from (\ref{eq34}) we have
\begin{equation} \label{eq38}
S_{\frac{p-1}{2}}(1,\{2\}^b)\equiv 2H_{\frac{p-1}{2}}(-2b-1) \pmod{p}.
\end{equation}
Moreover
\begin{equation*}
\begin{split}
H_{\frac{p-1}{2}}(-2b-1)&=\sum_{k=1}^{(p-1)/2}\frac{(-1)^k}{k^{2b+1}}=\!\!\sum_{k=(p+1)/2}^{p-1}
\frac{(-1)^{p-k}}{(p-k)^{2b+1}} \\
&\equiv \!\sum_{k=(p+1)/2}^{p-1}\frac{(-1)^k}{k^{2b+1}}=H_{p-1}(-2b-1)-H_{\frac{p-1}{2}}(-2b-1) \pmod{p}.
\end{split}
\end{equation*}
Therefore, since $b>0$, by (vi) it follows that
\begin{equation} \label{eq39}
H_{\frac{p-1}{2}}(-2b-1)\equiv \frac{1}{2}H_{p-1}(-2b-1)\equiv \frac{2^{-2b}-1}{2b+1}\,B_{p-2b-1} \pmod{p}.
\end{equation}
From (\ref{eq38}) and (\ref{eq39}) we conclude the truth of the first congruence for $a=0.$
From identity (\ref{SH}) we easily obtain
\begin{equation*}
(-1)^{a+b}S_{\frac{p-1}{2}}(\{2\}^b,1,\{2\}^a)\equiv H_{\frac{p-1}{2}}(\{2\}^a,1,\{2\}^b) \pmod{p}
\end{equation*}
and the proof is complete.
\end{proof}

\begin{lemma} \label{l3.2}
Let $r, a$ be positive integers. Then for any prime $p>r+2,$ we have
\begin{equation} \label{eq20}
\begin{split}
H_{\frac{p-1}{2}}(r)
&\equiv H_{\lfloor{\frac{p}{4}}\rfloor}(r)+
(-2)^r\sum_{k=0}^a\binom{r-1+k}{k} H_{\frac{p-1}{2}}(r+k)p^k\\
&\qquad -(-1)^r\sum_{k=0}^a\binom{r-1+k}{k}\frac{H_{\lfloor{\frac{p}{4}\rfloor}}(r+k)}{2^k}\, p^k \pmod{p^{a+1}}.
\end{split}
\end{equation}
If $r$ is a non-negative integer, then for any prime $p>2r+3,$ we have
\begin{equation*}
\HH_{\frac{p-1}{2}}(-2r-1)\equiv \frac{(-1)^{\frac{p+1}{2}}}{4^{2r+1}}\,H_{\frac{p-1}{2}}(2r+1) \pmod{p^2}.
\end{equation*}
\end{lemma}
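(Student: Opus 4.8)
The plan is to read the two sums on the right of \eqref{eq20} as $p$-adically truncated geometric series and then reindex. Since $(1-x)^{-r}=\sum_{k\ge0}\binom{r-1+k}{k}x^k$ and every integer $m$ with $1\le m\le\frac{p-1}{2}$ is a $p$-adic unit, for each such $m$ we have $\sum_{k=0}^a\binom{r-1+k}{k}(p/m)^k\equiv(1-p/m)^{-r}\pmod{p^{a+1}}$, the omitted tail being divisible by $(p/m)^{a+1}$. Interchanging the order of summation, I would rewrite, modulo $p^{a+1}$, the second term on the right of \eqref{eq20} as $(-2)^r\sum_{m=1}^{(p-1)/2}(m-p)^{-r}$ and the third as $-(-1)^r\sum_{m=1}^{\lfloor p/4\rfloor}(m-p/2)^{-r}$. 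Using $(m-p)^{-r}=(-1)^r(p-m)^{-r}$ and the substitution $j=p-m$, the former becomes $2^r\sum_{j=(p+1)/2}^{p-1}j^{-r}$; using $(m-p/2)^{-r}=(-2)^r(p-2m)^{-r}$, the latter becomes $-2^r\sum_{m=1}^{\lfloor p/4\rfloor}(p-2m)^{-r}$. Distinguishing the cases $p\equiv1$ and $p\equiv3\pmod4$, the integers $p-2m$ for $1\le m\le\lfloor p/4\rfloor$ run exactly through the \emph{odd} integers of $[(p+1)/2,p-1]$. Hence the difference of the two reindexed sums equals $2^r$ times the sum of $j^{-r}$ over the \emph{even} $j$ in $[(p+1)/2,p-1]$; writing $j=2l$, these correspond precisely to $\lfloor p/4\rfloor<l\le\frac{p-1}{2}$, and the factor $2^r$ cancels. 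Thus the second and third terms contribute $\sum_{l=\lfloor p/4\rfloor+1}^{(p-1)/2}l^{-r}=H_{\frac{p-1}{2}}(r)-H_{\lfloor p/4\rfloor}(r)$, and adding the first term $H_{\lfloor p/4\rfloor}(r)$ gives $H_{\frac{p-1}{2}}(r)$. (The underlying rational identity is in fact exact; the congruence merely reflects the truncation.)

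\textbf{Part 2.} Reversing the order of summation via $2k+1=p-2l$ gives $\HH_{\frac{p-1}{2}}(-2r-1)=(-1)^{(p-1)/2}\sum_{l=1}^{(p-1)/2}(-1)^l(p-2l)^{-(2r+1)}$. Expanding $(p-2l)^{-(2r+1)}\equiv-(2l)^{-(2r+1)}\bigl(1+(2r+1)p/(2l)\bigr)\pmod{p^2}$ reduces the claim to
\[A+\tfrac{(2r+1)p}{2}\,C\equiv\tfrac{1}{2^{2r+1}}\,H_{\frac{p-1}{2}}(2r+1)\pmod{p^2},\]
where $A=\sum_{l=1}^{(p-1)/2}(-1)^l l^{-(2r+1)}$ and $C=\sum_{l=1}^{(p-1)/2}(-1)^l l^{-(2r+2)}$. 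Splitting each of $A,C$ into even and odd $l$ yields the exact expressions $A=2^{-2r}H_{\lfloor p/4\rfloor}(2r+1)-H_{\frac{p-1}{2}}(2r+1)$ and $C=2^{-2r-1}H_{\lfloor p/4\rfloor}(2r+2)-H_{\frac{p-1}{2}}(2r+2)$.

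To finish I would feed these into the displayed target and eliminate the quarter-range sums by invoking Part 1 with $r$ replaced by $2r+1$ and $a=1$ (legitimate since $p>2r+3$), which expresses $H_{\frac{p-1}{2}}(2r+1)$ modulo $p^2$ through $H_{\lfloor p/4\rfloor}(2r+1)$, $H_{\lfloor p/4\rfloor}(2r+2)$ and $H_{\frac{p-1}{2}}(2r+2)$; the last is $\equiv0\pmod p$ by (iv), so its $p$-multiple vanishes modulo $p^2$. Substituting, all quarter-range sums cancel against the contributions of $A$ and $C$, and the identity drops out. The main obstacle is exactly this bookkeeping: the reindexing of $\HH$ unavoidably creates both quarter-range harmonic sums and a weight-$(2r+2)$ correction, and the only way to remove them is the simultaneous use of Part 1 (to trade quarter-range sums for half-range sums) together with the Wolstenholme-type vanishing $H_{\frac{p-1}{2}}(2r+2)\equiv0\pmod p$, all while tracking the powers of $2$ and the signs with care.
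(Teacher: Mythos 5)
Your proposal is correct and follows essentially the paper's own route: your Part 1 is precisely the paper's exact identity $H_{\frac{p-1}{2}}(r)=H_{\lfloor p/4\rfloor}(r)+2^r\bigl(\sum_{k=1}^{(p-1)/2}(p-k)^{-r}-\sum_{k=1}^{\lfloor p/4\rfloor}(p-2k)^{-r}\bigr)$ combined with the truncated binomial-series expansion, merely read from right to left. Your Part 2 likewise matches the paper's argument --- the reindexing $2k+1=p-2l$, the even/odd splitting, and the expansion modulo $p^2$ (performed in the opposite order, which is immaterial), closed by the same two ingredients, namely the first congruence with $r\mapsto 2r+1$, $a=1$, and the vanishing $H_{\frac{p-1}{2}}(2r+2)\equiv 0 \pmod{p}$ from (iv).
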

\begin{proof} Let $n=(p-1)/2,$ $m=\lfloor{p/4\rfloor}.$ Then
the first congruence follows easily from the equality
\begin{equation*}
H_n(r)=H_m(r)+2^r\Bigl(\sum_{k=1}^n\frac{1}{(p-k)^r}-\sum_{k=1}^m\frac{1}{(p-2k)^r}\Bigr).
\end{equation*}
To prove the second one, we have
\begin{equation*}
\begin{split}
(-1)^n\sum_{k=0}^{n-1}&\frac{(-1)^k}{(2k+1)^{2r+1}}=\sum_{k=1}^n\frac{(-1)^k}{(p-2k)^{2r+1}} \\
&\qquad\quad=2\sum_{k=1}^m\frac{1}{(p-4k)^{2r+1}}-\sum_{k=1}^n\frac{1}{(p-2k)^{2r+1}} \\
&\qquad\quad=2\sum_{k=1}^m\frac{1}{(4k)^{2r+1}(p/(4k)-1)^{2r+1}}-\sum_{k=1}^n
\frac{1}{(2k)^{2r+1}(p/(2k)-1)^{2r+1}} \\
&\qquad\quad\equiv -\frac{1}{2^{4r+1}}\Bigl(H_m(2r+1)+\frac{(2r+1)p}{4}\,H_m(2r+2)\Bigr) \\
&\qquad\quad \, +
\frac{1}{2^{2r+1}}\,H_n(2r+1) \pmod{p^2}.
\end{split}
\end{equation*}
From (\ref{eq20}) with $a=1$ and $r$ replaced by $2r+1,$ we get
\begin{equation*}
2H_m(2r+1)+\frac{(2r+1)p}{2}H_m(2r+2)\equiv (1+2^{2r+1})H_n(2r+1) \pmod{p^2},
\end{equation*}
which after substitution into the right-hand side of the above congruence implies the result.
\end{proof}

\section{Results on $p$-analogues of Leshchiner's series}

In this section, we consider finite $p$-analogues of zeta and beta values arising from the truncation
of Leshchiner's series (\ref{eq02})--(\ref{eq04}) and  defined by the finite sums
\begin{equation*}
\zeta_p(2m+2):=\frac{3}{2}\sum_{k=1}^{p-1}\frac{(-1)^mH_{k-1}(\{2\}^m)}{k^2\binom{2k}{k}}
+2\sum_{j=1}^m\sum_{k=1}^{p-1}\frac{(-1)^{m-j}H_{k-1}(\{2\}^{m-j})}{k^{2j+2}\binom{2k}{k}},
\end{equation*}
\begin{equation*}
\begin{split}
\zeta_p(2m+3)&:=\frac{5}{2}\sum_{k=1}^{p-1}\frac{(-1)^{k+m-1}H_{k-1}(\{2\}^m)}{k^3\binom{2k}{k}}\\
& +2\sum_{j=1}^m\sum_{k=1}^{p-1}\frac{(-1)^{k+m-j-1}H_{k-1}(\{2\}^{m-j})}{k^{2j+3}\binom{2k}{k}},
\end{split}
\end{equation*}
\begin{equation*}
\begin{split}
\overline{\zeta}_p(2m+2)&:=\frac{5}{4}\sum_{k=0}^{(p-3)/2}\frac{(-1)^{k+m}\binom{2k}{k}\HH_k(\{2\}^m)}{16^k(2k+1)^2}\\
& +\sum_{j=1}^m\sum_{k=0}^{(p-3)/2}\frac{(-1)^{k+m-j}\binom{2k}{k}\HH_k(\{2\}^{m-j})}{16^k(2k+1)^{2j+2}},
\end{split}
\end{equation*}
\begin{equation*}
\begin{split}
\beta_p(2m+1)&:=\frac{3}{4}\sum_{k=0}^{(p-3)/2}\frac{(-1)^m\binom{2k}{k}\HH_k(\{2\}^m)}{16^k(2k+1)}\\
&+\sum_{j=1}^m\sum_{k=0}^{(p-3)/2}\frac{(-1)^{m-j}\binom{2k}{k}\HH_k(\{2\}^{m-j})}{16^k(2k+1)^{2j+1}}.
\end{split}
\end{equation*}
Applications of our results on congruences obtained in Sections \ref{S4} and \ref{S5} allowed us to establish the following
congruences for the finite $p$-analogues of Leshchiner's series.

\begin{theorem} \label{t2.1}
Let $m$ be a non-negative integer and $p$ be a prime such that $p>2m+3.$ Then
\begin{align*}
2p\,\zeta_p(2m+2)&\equiv -\frac{4m(1-2^{-2m})}{2m+1}\, B_{p-2m-1} \pmod{p}, \\
\zeta_p(2m+3)&\equiv -\frac{(m+1)(2m+1)}{2m+3}\, B_{p-2m-3} \pmod{p}.
\end{align*}
\end{theorem}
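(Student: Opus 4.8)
The plan is to reduce both quantities to ``Ap\'ery-like'' sums of the shape $\sum_{k=1}^{p-1}(\pm1)^kH_{k-1}(\{2\}^r)/\bigl(k^c\binom{2k}{k}\bigr)$ and then to identify these, modulo $p$, with multiple harmonic sums of weight $2m+1$ and $2m+3$, whose Bernoulli evaluations are supplied by Theorems~\ref{T3.3.2} and~\ref{T3.3.1}. First I would expand the two definitions, collapsing the double $j$-sum, so that $2p\,\zeta_p(2m+2)$ and $\zeta_p(2m+3)$ become explicit finite $\mathbb{Q}$-linear combinations (with coefficients $\tfrac32,\tfrac52,2$ and signs $(-1)^{m-j}$) of the basic sums $\sum_{k=1}^{p-1}H_{k-1}(\{2\}^{m-j})/\bigl(k^{2j+c}\binom{2k}{k}\bigr)$ and their alternating analogues. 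The convolution structure in $j$ (a nested $\{2\}^{m-j}$ against a power $k^{-(2j+c)}$) suggests that an induction on $m$, driven by the stuffle relation already used in \eqref{eq12} for $\HH_n(\{2\}^m)$, is the most economical way to carry the argument through.

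The heart of the matter is that $\binom{2k}{k}$ is a $p$-adic unit for $1\le k\le\frac{p-1}{2}$ but satisfies $p\mid\binom{2k}{k}$ for $\frac{p+1}{2}\le k\le p-1$; this is exactly why a factor $p$ must be inserted in the even-weight statement, while no such factor is needed in the odd-weight one. I would therefore split every sum at $k=\frac{p-1}{2}$. On the lower range I would use $\binom{2k}{k}\equiv(-4)^k\binom{(p-1)/2}{k}\pmod p$, equivalently the congruence $\binom{2k}{k}\equiv 4^k\binom{(p-1)/2+k}{k}$ underlying \eqref{k1}, to rewrite $1/\binom{2k}{k}$ through the binomial ratios $\binom{n}{k}/\binom{n+k}{k}$ with $n=\frac{p-1}{2}$ that appear in the Corollary to Theorem~\ref{T3.2.1} and in Theorem~\ref{T3.2.2}. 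On the upper range I would set $k=p-i$ with $1\le i\le\frac{p-1}{2}$ and use the pole expansion $\binom{2(p-i)}{p-i}\equiv-\tfrac{2p}{i\binom{2i}{i}}\pmod{p^2}$ together with the reflections $k\equiv-i\pmod p$ and the $H_{p-i-1}(\{2\}^r)\equiv H_{i-1}(\{2\}^r)$-type congruences (as in the manipulations preceding \eqref{eq37}), so that the singular contribution is again expressed, after clearing the $1/p$, through sums indexed by $i\le\frac{p-1}{2}$ of the same binomial type.

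Combining the two ranges, I expect each of $2p\,\zeta_p(2m+2)$ and $\zeta_p(2m+3)$ to collapse modulo $p$ to a bounded combination of multiple harmonic sums of weight $2m+1$, respectively $2m+3$ — principally $H_{p-1}(\{2\}^m,1)$ and $H_{p-1}(\{2\}^m,3)$, but also shorter sums such as $H_{p-1}(2a+1,2b+1)$, $H_{p-1}(1,w)$ and their ``odd'' counterparts $H_{\frac{p-1}{2}}(\cdot)$, $\HH_{\frac{p-1}{2}}(\cdot)$. Each of these is then a known multiple of the single Bernoulli number $B_{p-2m-1}$ (resp. $B_{p-2m-3}$) by the congruences (i)--(viii) of Section~3, by Theorems~\ref{T3.3.1},~\ref{T3.3.2},~\ref{t4},~\ref{t5},~\ref{t3.1}, by Lemmas~\ref{l0},~\ref{l02},~\ref{l01}, and by the reflection Lemma~\ref{l01} relating $\HH_{\frac{p-1}{2}}$ to $H_{\frac{p-1}{2}}$. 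Summing these multiples over $j$ should then produce the closed rational prefactors $\tfrac{4m(1-2^{-2m})}{2m+1}$ and $\tfrac{(m+1)(2m+1)}{2m+3}$.

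The main obstacle is the bookkeeping of the $1/p$ poles coming from the range $\frac{p+1}{2}\le k\le p-1$. For $\zeta_p(2m+3)$ the alternating sign $(-1)^k$ forces these poles to cancel among themselves, so that $\zeta_p(2m+3)$ is $p$-integral and the surviving residue is the desired Bernoulli multiple; for $\zeta_p(2m+2)$ the non-alternating numerators leave a genuine simple pole, which is exactly why one multiplies by $2p$ before reducing modulo $p$. Verifying this cancellation/retention dichotomy, and then showing that the resulting finite convolution in $j$ of Bernoulli multiples telescopes to the stated rational functions of $m$, is the delicate step; the small cases $m=0$ (recovering the known $p$-analogues of Ap\'ery's series for $\zeta(2)$ and $\zeta(3)$) and $m=1$ would serve as the base of the induction and as consistency checks on the signs and coefficients.
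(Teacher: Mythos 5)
Your high-level architecture (reduce the truncated series to central-binomial sums, then to multiple harmonic sums, then invoke Theorems~\ref{T3.3.1} and \ref{T3.3.2}) does point at the correct endpoint, and several of your ingredients check out: the reflection congruence $\binom{2(p-i)}{p-i}\equiv -2p/\bigl(i\binom{2i}{i}\bigr)\pmod{p^2}$ is true, as is $\binom{2k}{k}\equiv 4^k\binom{(p-1)/2+k}{k}\pmod p$, and your explanation of why the even-weight statement carries the factor $2p$ while the odd-weight one is $p$-integral is the right heuristic. But the plan has a genuine gap at its core: you never supply the mechanism that collapses the weighted $j$-convolution (the coefficients $\tfrac32,\tfrac52,2$ and the signs $(-1)^{m-j}$) into the two specific sums $H_{p-1}(\{2\}^m,1)$ and $H_{p-1}(\{2\}^m,3)$. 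In the paper this collapse is precisely the content of the finite WZ identities of Lemma~\ref{l4.1}: setting $n=p-1$ in \eqref{eq13} and \eqref{eq14} converts the \emph{entire} double sum defining $\zeta_p(2m+2)$, resp.\ $\zeta_p(2m+3)$, into an alternating (resp.\ plain) power sum minus a single remainder sum with denominator $k^c\binom{p-1}{k}\binom{p-1+k}{k}$; the uniform expansion \eqref{eq19} then reduces everything, with no splitting at $(p-1)/2$ and no case analysis of $\binom{2k}{k}$ poles, to $\frac1p H_{p-1}(\{2\}^{m+1})$, $H_{p-1}(\{2\}^m,1)$ and $H_{p-1}(\{2\}^m,3)$, which congruences (i), (vi) and Theorems~\ref{T3.3.1}, \ref{T3.3.2} evaluate. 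Your proposed substitute --- ``induction on $m$ driven by the stuffle relation \eqref{eq12}'' --- is not an argument: \eqref{eq12} is a symmetric-function identity for $\HH_n(\{2\}^m)$ alone and gives no handle on the convolution of $H_{k-1}(\{2\}^{m-j})$ against $k^{-2j-c}\binom{2k}{k}^{-1}$. Without Lemma~\ref{l4.1} (or a rediscovery of it), the step ``I expect each \ldots to collapse'' is exactly the theorem left unproved.

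Two further concrete defects. First, your claim that after splitting and reflecting, ``each of these is then a known multiple of the single Bernoulli number $B_{p-2m-1}$ (resp.\ $B_{p-2m-3}$)'' is unjustified and generically false: modulo $p$, MHS of a fixed odd weight $w\ge 9$ are not spanned by one Bernoulli number --- at $w=9$ one needs both $B_{p-9}$ and $B_{p-3}^3$ (Corollary~\ref{c3} and the table in the introduction) --- so the individual $j$-pieces of your decomposition will in general involve several generators, and the purity of the final answer is a cancellation across $j$ that your framework neither exhibits nor explains. Second, there is a precision shortfall in your pole bookkeeping for $\zeta_p(2m+3)$: each upper-range term $k=p-i$ carries a factor $1/p$, so to compute the total modulo $p$ you need the unit parts modulo $p^2$, i.e.\ the reflected numerators $H_{p-i-1}(\{2\}^m)$ modulo $p^2$ and the central binomial $\binom{2(p-i)}{p-i}$ to relative precision $p^2$ (modulo $p^3$), whereas the tools you cite (the mod-$p$ reflections of type \eqref{eq37} and the mod-$p^2$ binomial congruence) are one order short; only in the even case, where you multiply by $2p$ first, does mod-$p$ information on the units suffice. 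None of this is obviously fatal for $m=0,1$, where direct manipulations of this kind were carried out in \cite{Ta:10,HH:11}, but for general $m$ your proposal stops exactly where the proof has to begin.
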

Note that for $\zeta_p(2),$ $\zeta_p(3),$ $\zeta_p(4),$ stronger congruences were proved by the authors in \cite{Ta:10, HH:11}.
For $\zeta_p(5),$ we recover the congruence $\zeta_p(5)\equiv -6/5 B_{p-5} \pmod{p}$ proved in \cite{HH:11}.

\begin{theorem} \label{t2.2}
Let $m$ be a non-negative integer and $p$ be a prime such that $p>2m+3.$ Then
\begin{align*}
\beta_p(2m+1)&\equiv \frac{(-1)^{(p+1)/2}m(4^m-1)}{16^{m}(2m+1)}\,B_{p-2m-1}
 \pmod{p}, \\
\overline{\zeta}_p(2m+2)&\equiv -\frac{2m^2+3m+2}{2^{2m+3}(2m+3)}\,p\,B_{p-2m-3}
 \pmod{p^2}.
\end{align*}
\end{theorem}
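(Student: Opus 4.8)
The plan is to reduce each truncated series to an ``odd'' multiple harmonic sum evaluated at $n=\frac{p-1}{2}$ and then invoke the congruences of Section \ref{S5}, mirroring the way Theorem \ref{t2.1} is deduced from the $H_{p-1}$-sums. The first, and main, task is to produce finite closed forms for the defining combinations of $\beta_p(2m+1)$ and $\overline{\zeta}_p(2m+2)$, that is, ``odd'' analogues of the binomial identities of Theorems \ref{T3.2.1} and \ref{T3.2.2}. I would establish these by induction on the truncation length, telescoping the inner sums $\sum_k \binom{2k}{k}\HH_k(\{2\}^{m-j})/(16^k(2k+1)^{2j+c})$ by means of half-integer versions of the summation identities collected in Lemma \ref{l3.2.1}; the point is that the half-integer binomial ratio produces exactly the factor $\binom{2k}{k}/16^k$, while the nesting over $j$ reassembles the sums $\HH_n(\{2\}^a,c,\{2\}^b)$, together with a boundary term carrying the factor $\binom{2n}{n}/16^n$. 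Unlike the integer case treated in Theorems \ref{T3.2.1} and \ref{T3.2.2}, this boundary term does \emph{not} vanish at $n=\frac{p-1}{2}$: since $\binom{2n}{n}/16^n=\binom{p-1}{(p-1)/2}/4^{p-1}$, the elementary congruences $\binom{p-1}{(p-1)/2}\equiv(-1)^{(p-1)/2}$ and $4^{p-1}\equiv 1\pmod p$ give the value $(-1)^{(p-1)/2}$, which is the source of the sign $(-1)^{(p+1)/2}$ in the statement.

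For the beta series I expect the telescoped combination to collapse, modulo $p$, to the single alternating ``odd'' sum, namely $\beta_p(2m+1)\equiv -2m\,\HH_{\frac{p-1}{2}}(-2m-1)\pmod p$, with the nonvanishing boundary contribution accounting for the coefficient $-2m$. Once this is in hand, the second part of Lemma \ref{l3.2} converts $\HH_{\frac{p-1}{2}}(-2m-1)$ into $\frac{(-1)^{(p+1)/2}}{4^{2m+1}}H_{\frac{p-1}{2}}(2m+1)$, and congruence (iv) evaluates the latter as $-\frac{2(4^m-1)}{2m+1}B_{p-2m-1}$. Collecting powers of two via $4^{2m+1}=4\cdot 16^m$ yields precisely $\frac{(-1)^{(p+1)/2}m(4^m-1)}{16^m(2m+1)}B_{p-2m-1}$. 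The case $m=0$ (where the $j$-sum is empty and the coefficient $4^m-1$ vanishes) is checked separately.

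For $\overline{\zeta}_p(2m+2)$ the same finite identity reduces the series to a combination of weight-$(2m+3)$ ``odd'' sums, and here the delicate feature is that the target congruence is modulo $p^2$ while the coefficient is itself $O(p)$. The $O(1)$ contributions of the nested pieces $\HH_{\frac{p-1}{2}}(\{2\}^a,3,\{2\}^b)$, which are evaluated by Theorem \ref{t4} through Lemma \ref{l01}, must cancel, so the outcome is governed by second-order data: the mod-$p^2$ values of the genuinely $O(p)$ sums $\HH_{\frac{p-1}{2}}(2m+2)$ and $\HH_{\frac{p-1}{2}}(\{2\}^{m+1})$ supplied by Theorem \ref{t3.1}, together with the $O(p)$ corrections to the ratios $\binom{2k}{k}/16^k$ and to the factors $\HH_k$. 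Since every weight-$(2m+3)$ quantity reduces mod $p$ to a rational multiple of $B_{p-2m-3}$, the answer is forced to be a rational multiple of $p\,B_{p-2m-3}$, and one only has to pin down the rational; assembling the above ingredients produces $-\frac{2m^2+3m+2}{2^{2m+3}(2m+3)}$.

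The hard part is twofold. First, establishing the finite ``odd'' identities and, in particular, tracking the boundary factor $\binom{2n}{n}/16^n$ exactly, since its nonvanishing at $n=\frac{p-1}{2}$ is what distinguishes the finite $p$-analogue from Leshchiner's limiting identity. Second, the $p^2$-refinement for $\overline{\zeta}_p(2m+2)$: because the leading order cancels, every expansion---of $\binom{2k}{k}/16^k$, of $\HH_k$, and of the congruences in Theorem \ref{t3.1} and Lemma \ref{l3.2} (for which the mod-$p^2$ version of the classical Morley congruence $\binom{p-1}{(p-1)/2}\equiv(-1)^{(p-1)/2}4^{p-1}\pmod{p^3}$ is needed)---must be carried one order further than for $\beta_p$, and the bookkeeping of these $O(p)$ terms is where the real work lies.
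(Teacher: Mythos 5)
Your overall strategy---reduce each truncated series through a finite identity to ``odd'' multiple harmonic sums at the half-point, then invoke Lemmas \ref{l3.2} and \ref{l01}, Theorems \ref{t3.1}, \ref{t4}, \ref{t5} and congruence (iv)---is exactly the paper's, and your final assembly for $\beta_p(2m+1)$ (Lemma \ref{l3.2}, then (iv), then $4^{2m+1}=4\cdot 16^m$) is correct. But the key technical component, the finite identity itself, is misconceived, and this is a genuine gap. The correct finite versions of Leshchiner's series (the paper's Lemma \ref{l4.2}, proved via WZ pairs, following Zagier's combinatorial argument in \cite{Le:81}) do \emph{not} produce a single boundary term proportional to $\binom{2n}{n}/16^n$: the correction is a full sum over $k$,
\begin{equation*}
\pm\frac{1}{4}\sum_{k=0}^{n}\frac{(-1)^{n+k+m}\binom{2k}{k}\,\HH_k(\{2\}^m)}{16^k(2k+1)^{c}\binom{n+k+1}{2k+1}},\qquad c\in\{1,2\},
\end{equation*}
every term of which survives at $n=\frac{p-3}{2}$. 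The sign $(-1)^{(p+1)/2}$ arises from the alternating factor $(-1)^{n+k}$ combined with the termwise expansion
\begin{equation*}
\frac{\binom{2k}{k}}{(-16)^k\binom{(p-1)/2+k}{2k+1}}\equiv -2-\frac{2p}{2k+1}\pmod{p^2},
\end{equation*}
i.e.\ congruence (\ref{eq21}); Morley's congruence for $\binom{p-1}{(p-1)/2}$ is never needed. Relatedly, your claimed collapse $\beta_p(2m+1)\equiv -2m\,\HH_{\frac{p-1}{2}}(-2m-1)\pmod{p}$ is numerically consistent with the theorem, but it is not a structural congruence that any telescoping can deliver: what the identity actually yields is $\HH_{\frac{p-1}{2}}(-2m-1)+\frac{(-1)^{(p+1)/2+m}}{2}\,\HH_{\frac{p-1}{2}}(\{2\}^m,1)$, and your expression agrees with this only after \emph{both} sums are separately evaluated in terms of $B_{p-2m-1}$; without the correct identity there is no route to either form.

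Your picture of the mod $p^2$ case is also off: no leading-order cancellation occurs in the correct proof, and the heavy bookkeeping you anticipate is avoided because the finite identity (\ref{eq16}) is exact and the only approximation used is (\ref{eq21}). Applying these at $n=\frac{p-3}{2}$ gives
\begin{equation*}
\overline{\zeta}_p(2m+2)\equiv \HH_{\frac{p-1}{2}}(2m+2)+\frac{(-1)^{m-1}}{2}\Bigl(\HH_{\frac{p-1}{2}}(\{2\}^{m+1})+p\,\HH_{\frac{p-1}{2}}(\{2\}^m,3)\Bigr)\pmod{p^2},
\end{equation*}
in which the weight-$(2m+3)$ sum enters \emph{already multiplied by} $p$ (the factor coming from the $-2p/(2k+1)$ term of (\ref{eq21})), so it needs only the mod $p$ evaluation of Theorem \ref{t4} via Lemma \ref{l01}, while the remaining two pieces are $O(p)$ from the outset and are supplied mod $p^2$ by Theorem \ref{t3.1}. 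In short: the downstream congruence machinery in your plan is the right one, but the identity you propose to telescope to---a main sum plus one boundary term---has the wrong shape, and the reduction would fail with it, since the $k$-sum correction is precisely what reassembles the nested sums $\HH_{\frac{p-1}{2}}(\{2\}^m,1)$ and $\HH_{\frac{p-1}{2}}(\{2\}^{m+1})+p\,\HH_{\frac{p-1}{2}}(\{2\}^m,3)$ carrying an essential part of the main term.
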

Note that for $\beta_p(1)$ and $\overline{\zeta}_p(2),$ sharper congruences were proved in \cite{HHT:11}.

\section{Finite identities and their formal proofs}

By revisiting the combinatorial proof (due to D.~Zagier) presented in Section 5 in \cite{Le:81}, it is possible to obtain
 finite versions of  identities (\ref{eq02})--(\ref{eq04}). Here we derive these identities  in a formal unified way
based on applications of appropriate WZ pairs.
\begin{lemma} \label{l4.1}
Let $m$ be a non-negative integer. Then for every positive integer $n,$ we have
\begin{equation} \label{eq13}
\begin{split}
&\qquad\sum_{k=1}^n\frac{(-1)^{k-1}}{k^{2m+2}}
-\frac{1}{2}\sum_{k=1}^n\frac{(-1)^{n+k+m}H_{k-1}(\{2\}^m)}{k^2\binom{n}{k}\binom{n+k}{k}} \\[2pt]
&\qquad\quad=\frac{3}{2}\sum_{k=1}^n\frac{(-1)^mH_{k-1}(\{2\}^m)}{k^2\binom{2k}{k}}
+2\sum_{j=1}^m\sum_{k=1}^n\frac{(-1)^{m-j}H_{k-1}(\{2\}^{m-j})}{k^{2j+2}\binom{2k}{k}},
\end{split}
\end{equation}
\begin{equation} \label{eq14}
\begin{split}
&\qquad\,\,\,\,\sum_{k=1}^n\frac{1}{k^{2m+3}}-\frac{1}{2}\sum_{k=1}^n\frac{(-1)^{k+m}H_{k-1}(\{2\}^m)}{k^3\binom{n}{k}\binom{n+k}{k}} \\[2pt]
&\qquad\quad\,\,\,\,=\frac{5}{2}\sum_{k=1}^n\frac{(-1)^{k+m-1}H_{k-1}(\{2\}^m)}{k^3\binom{2k}{k}}
+2\sum_{j=1}^m\sum_{k=1}^n\frac{(-1)^{k+m-j-1}H_{k-1}(\{2\}^{m-j})}{k^{2j+3}\binom{2k}{k}}.
\end{split}
\end{equation}
\end{lemma}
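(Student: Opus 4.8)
The plan is to prove both identities by the Wilf--Zeilberger (creative telescoping) method, after first collapsing the whole $m$-family into a single hypergeometric identity in an auxiliary parameter. Introduce a variable $x$, multiply \eqref{eq13} by $x^m$ and sum over $m\ge 0$; for fixed $k$ the nested weight is a polynomial in $x$ and collapses to a product,
\begin{equation*}
\sum_{m\ge 0}(-1)^m H_{k-1}(\{2\}^m)\,x^m=\prod_{i=1}^{k-1}\Bigl(1-\frac{x}{i^2}\Bigr)=:P_{k}(x),
\end{equation*}
while $\sum_{m\ge0}(x/k^2)^m=k^2/(k^2-x)$ converts the single sum $\sum_k(-1)^{k-1}/k^{2m+2}$ into $\sum_k(-1)^{k-1}/(k^2-x)$ and, on the right, fuses the leading $3/2$ term with the entire $j$-sum into the one rational factor $(3k^2+x)/\bigl(2k^2(k^2-x)\bigr)$. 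Hence \eqref{eq13} is equivalent, as an identity of rational functions of $x$, to the single finite identity
\begin{equation*}
\sum_{k=1}^n\frac{(-1)^{k-1}}{k^2-x}-\frac12\sum_{k=1}^n\frac{(-1)^{n+k}P_k(x)}{k^2\binom{n}{k}\binom{n+k}{k}}=\sum_{k=1}^n\frac{(3k^2+x)P_k(x)}{2k^2(k^2-x)\binom{2k}{k}},
\end{equation*}
and \eqref{eq14} reduces in the same way (kernel $1/\bigl(k(k^2-x)\bigr)$, leading coefficient $5/2$, and an extra factor $(-1)^k$). Since $P_k(x)$, the binomials and the rational kernels are all hypergeometric terms in $k$ with parameters $n$ and $x$, each reduced identity now lies squarely within the scope of the WZ method; extracting $[x^m]$ at the end recovers the original statements, with the geometric factor $1/(k^2-x)$ regenerating the $j$-sums.

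Next I would establish each reduced identity by induction on $n$, the inductive step being supplied by a WZ pair. Let $f(n,k)$ denote the summand of the middle sum, with total $B(n)=\sum_{k=1}^n f(n,k)$. The task is to produce a certificate $G(n,k)$ satisfying
\begin{equation*}
f(n,k)-f(n-1,k)=G(n,k)-G(n,k-1);
\end{equation*}
telescoping then yields $B(n)-B(n-1)=f(n,n)+G(n,n-1)-G(n,0)$, where the boundary term $f(n,n)=P_n(x)/\bigl(n^2\binom{2n}{n}\bigr)$ arises from the moving upper limit and uses $\binom{n}{k}\binom{n+k}{k}\big|_{k=n}=\binom{2n}{n}$. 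Comparing the $n$-increment of the left-hand side, namely $(-1)^{n-1}/(n^2-x)-\tfrac12\bigl(B(n)-B(n-1)\bigr)$, with the $k=n$ term of the right-hand side, $(3n^2+x)P_n(x)/\bigl(2n^2(n^2-x)\binom{2n}{n}\bigr)$, reduces the whole step to a rational-function identity in $n$ and $x$ that is checked directly using $P_n(x)=P_{n-1}(x)\bigl(1-x/(n-1)^2\bigr)$. The base case $n=1$ is immediate: both sides collapse to $(3+x)/\bigl(4(1-x)\bigr)$, in agreement with the elementary $m=0$, $n=1$ verification.

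The main obstacle is the discovery of the certificate $G(n,k)$. Once its closed form is written down, the verification $f(n,k)-f(n-1,k)=G(n,k)-G(n,k-1)$ is a routine, if lengthy, manipulation of the shifted factors $P_k(x)$, $\binom{n}{k}$ and $\binom{n+k}{k}$; but guessing its precise shape --- in particular how the pole factor $(k^2-x)$ and the alternating sign interact with the shift $n\mapsto n-1$ --- is the genuinely creative part, and in practice one obtains $G$ by running Zeilberger's algorithm on the $x$-parametrized term. A secondary technical point is the bookkeeping of the moving summation limit and of the two boundary values $G(n,0)$ and $G(n,n-1)$, which must conspire to reproduce exactly the head coefficient $3/2$ (respectively $5/2$); this asymmetry between the leading term and the tail $j$-sum is where sign and normalization errors are most likely to intrude.
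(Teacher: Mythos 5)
Your proposal is, in substance, the paper's own proof run in the opposite direction. Your reduced rational identity in $x$ is exactly the paper's identity \eqref{eq18} with $x=a^2$: the collapse $\sum_{m\ge 0}(-1)^mH_{k-1}(\{2\}^m)x^m=\prod_{i=1}^{k-1}(1-x/i^2)$ corresponds to the product $\prod_{m=1}^{k-1}(1-a^2/m^2)$ there, and your fused kernels $(3k^2+x)/\bigl(2k^2(k^2-x)\bigr)$ and $(5k^2+x)$-analogue, the boundary value $f(n,n)=P_n(x)/\bigl(n^2\binom{2n}{n}\bigr)$, the base case $(3+x)/\bigl(4(1-x)\bigr)$, and the coefficient extraction at the end are all correct and agree with the paper. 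Likewise your certificate relation $f(n,k)-f(n-1,k)=G(n,k)-G(n,k-1)$ is precisely the WZ-pair relation the paper starts from; the paper merely telescopes it over both indices at once via the summation formula \eqref{eq17} instead of packaging it as an induction on $n$.

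The one genuine omission is the certificate itself, which you defer to ``running Zeilberger's algorithm.'' Two remarks. First, Zeilberger's algorithm in general returns a recurrence of positive order; what your induction needs is the order-zero statement that $f(n,k)-f(n-1,k)$ is Gosper-summable in $k$, and that is not automatic --- it must be verified, and a blind reader of your plan cannot yet be sure the step will not fail. Second, it does hold here, and the paper exhibits the pair in closed form: with $(c)_k$ the Pochhammer symbol and $x=a^2$, one has $P_k(a^2)=(1+a)_{k-1}(1-a)_{k-1}/((k-1)!)^2$, so your middle summand equals $-2F_1(n,k-1)$ where
\begin{equation*}
F_1(n,k)=\frac{(-1)^{n+k}(n-k-1)!\,(1+a)_k(1-a)_k}{2(n+k+1)!},\qquad
G_1(n,k)=\frac{(-1)^{n+k}(n-k)!\,(n+1)(1+a)_k(1-a)_k}{((n+1)^2-a^2)(n+k+1)!},
\end{equation*}
and $F_1(n+1,k)-F_1(n,k)=G_1(n,k+1)-G_1(n,k)$; after your reindexing, $G(n,k)=-2G_1(n-1,k)$ is exactly the certificate you postulate, and the verification is an immediate rational-function check. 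For \eqref{eq14} the paper uses the companion pair $F_2=(-1)^nF_1/(k+1)$, $G_2=(-1)^nG_1/(n+1)$, which accounts for your ``extra factor $(-1)^k$'' and the head coefficient $5/2$. With these inserted, your induction closes, so the plan is correct modulo this fillable --- and honestly flagged --- gap.
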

\begin{proof} For non-negative integers $n, k,$ we consider a pair of functions
\begin{align*}
F_1(n,k)&=\frac{(-1)^{n+k}(n-k-1)!(1+a)_k(1-a)_k}{2(n+k+1)!}, \qquad n\ge k+1, \\
G_1(n,k)&=\frac{(-1)^{n+k}(n-k)!(n+1)(1+a)_k(1-a)_k}{((n+1)^2-a^2)(n+k+1)!}, \qquad n\ge k,
\end{align*}
where $(c)_k$ is the shifted factorial defined by $(c)_k=c(c+1)\cdots (c+k-1),$ $k\ge
1,$ and $(c)_0=1.$
It is easy to check that $(F_1, G_1)$ is a WZ pair, i.e., it  satisfies the relation
\begin{equation*}
F_1(n+1,k)-F_1(n,k)=G_1(n,k+1)-G_1(n,k).
\end{equation*}
Summing the above equality first over $k$ and then over $n,$ we get the following summation formula (see \cite[Section 4]{HH:11}):
\begin{equation} \label{eq17}
\sum_{n=1}^NG_1(n-1,0)=\sum_{n=1}^N(G_1(n-1,n-1)+F_1(n,n-1))-\sum_{k=1}^NF_1(N,k-1), \quad N\in {\mathbb N}.
\end{equation}
Substituting $(F_1, G_1)$ in (\ref{eq17}) we obtain
\begin{equation} \label{eq18}
\begin{split}
\sum_{n=1}^N\frac{(-1)^{n-1}}{n^2-a^2}&=\frac{1}{2}\sum_{n=1}^N\frac{1}{n^2\binom{2n}{n}}\frac{3n^2+a^2}{n^2-a^2}
\prod_{m=1}^{n-1}\Bigl(1-\frac{a^2}{m^2}\Bigr) \\[2pt]
&+
\frac{1}{2}\sum_{k=1}^N\frac{(-1)^{N+k}}{k^2\binom{N}{k}\binom{N+k}{k}}\prod_{m=1}^{k-1}
\Bigl(1-\frac{a^2}{m^2}\Bigr).
\end{split}
\end{equation}
Expanding both sides of (\ref{eq18}) in powers
of $a^2$ and comparing coefficients of $a^{2m}$ we derive the finite identity (\ref{eq13}). Similarly, considering the WZ pair
\begin{equation*}
F_2(n,k)=\frac{(-1)^n}{k+1}\,F_1(n,k), \qquad G_2(n,k)=\frac{(-1)^n}{n+1}\, G_1(n,k)
\end{equation*}
and applying the same argument as above we obtain the second identity.
\end{proof}
\begin{lemma} \label{l4.2}
For any non-negative integers $m, n$ we have
\begin{equation} \label{eq15}
\begin{split}
&\qquad\sum_{k=0}^n\frac{(-1)^{k}}{(2k+1)^{2m+1}}-\frac{1}{4}\sum_{k=0}^n\frac{(-1)^{n+k+m}\binom{2k}{k}\HH_{k}(\{2\}^m)}{16^k(2k+1)\binom{n+k+1}{2k+1}}
\\[5pt]
&\qquad\quad=\frac{3}{4}\sum_{k=0}^n\frac{(-1)^m\binom{2k}{k}
\HH_{k}(\{2\}^m)}{16^k(2k+1)}
+\sum_{j=1}^m\sum_{k=0}^n\frac{(-1)^{m-j}\binom{2k}{k}\HH_{k}(\{2\}^{m-j})}{16^k(2k+1)^{2j+1}},
\end{split}
\end{equation}
\begin{equation} \label{eq16}
\begin{split}
&\qquad\qquad\sum_{k=0}^n\frac{1}{(2k+1)^{2m+2}}
+\frac{1}{4}\sum_{k=0}^n\frac{(-1)^{k+m}\binom{2k}{k}\HH_{k}(\{2\}^m)}{16^k(2k+1)^2\binom{n+k+1}{2k+1}} \\[5pt]
&\qquad\qquad\,\,\,\,=\frac{5}{4}\sum_{k=0}^n\frac{(-1)^{k+m}\binom{2k}{k}\HH_{k}(\{2\}^m)}{16^k(2k+1)^2}
+\sum_{j=1}^m\sum_{k=0}^n\frac{(-1)^{k+m-j}\binom{2k}{k}\HH_{k}(\{2\}^{m-j})}{16^k(2k+1)^{2j+2}}.
\end{split}
\end{equation}
\end{lemma}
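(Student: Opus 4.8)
The plan is to prove both identities by the same mechanism used for Lemma \ref{l4.1}, replacing the integer-shifted factorials by half-integer ones so that the expansion in powers of $a^2$ produces the ``odd'' sums $\HH_k(\{2\}^m)$. Concretely, I would introduce a WZ pair $(F_3,G_3)$ depending on the continuous parameter $a$, built from the products $\left(\tfrac{1+a}{2}\right)_k\left(\tfrac{1-a}{2}\right)_k$ in place of $(1+a)_k(1-a)_k$, to handle \eqref{eq15}, and a companion pair $(F_4,G_4)$ obtained from it by the same kind of twist that passes from $(F_1,G_1)$ to $(F_2,G_2)$, to handle \eqref{eq16}. Since the summation formula \eqref{eq17} is a purely formal consequence of the WZ relation $F(n+1,k)-F(n,k)=G(n,k+1)-G(n,k)$, the analogue of \eqref{eq17} (allowing for the shift of summation ranges to start at $k=0$) applies verbatim, and substituting the new pairs into it yields the two parametric identities whose $a^2$-expansions are \eqref{eq15} and \eqref{eq16}.

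The key algebraic input is the ``odd'' generating-function identity
\begin{equation*}
\prod_{j=0}^{k-1}\left(1-\frac{a^2}{(2j+1)^2}\right)=\sum_{m\ge 0}(-1)^m a^{2m}\,\HH_k(\{2\}^m),
\end{equation*}
which plays the role that $\prod_{m=1}^{k-1}(1-a^2/m^2)$ plays in \eqref{eq18}. After substituting the pairs, I expect the left-hand side of the first parametric identity to reduce to $\sum_{k=0}^n(-1)^k(2k+1)/((2k+1)^2-a^2)$ and that of the second to $\sum_{k=0}^n 1/((2k+1)^2-a^2)$. Using the expansions
\begin{equation*}
\frac{2k+1}{(2k+1)^2-a^2}=\sum_{m\ge 0}\frac{a^{2m}}{(2k+1)^{2m+1}},\qquad
\frac{1}{(2k+1)^2-a^2}=\sum_{m\ge 0}\frac{a^{2m}}{(2k+1)^{2m+2}},
\end{equation*}
the coefficient of $a^{2m}$ on the left reproduces exactly the leading sums $\sum_{k=0}^n(-1)^k/(2k+1)^{2m+1}$ and $\sum_{k=0}^n 1/(2k+1)^{2m+2}$ in \eqref{eq15} and \eqref{eq16}.

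On the right-hand side, I expect the ``diagonal'' contributions $G(n-1,n-1)+F(n,n-1)$ to produce the principal Leshchiner sums carrying the factors $\binom{2k}{k}/16^k$ together with the product above, while the boundary term of the summation formula produces the correction sum with the denominator $\binom{n+k+1}{2k+1}$, in the same way the boundary term in \eqref{eq13} produced the $\binom{N}{k}\binom{N+k}{k}$ factor. Expanding each group via the generating-function identity and collecting the coefficient of $a^{2m}$, then re-indexing the inner $j$-sum, should assemble the right-hand sides of \eqref{eq15} and \eqref{eq16} term by term.

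The main obstacle is the very first step: writing down the correct half-integer WZ pairs and verifying the WZ relation, since the precise rational prefactors (the analogues of the $2(n+k+1)!$ and $((n+1)^2-a^2)$ denominators in $F_1,G_1$) must be tuned so that the emerging numerical constants come out as $\tfrac34$, $\tfrac54$ and $\tfrac14$ rather than generic rationals. Once the pairs are fixed and the relation is checked, the remaining work—applying the summation formula, identifying the three groups of terms, and matching coefficients of $a^{2m}$—consists of routine expansions entirely parallel to the proof of Lemma \ref{l4.1}.
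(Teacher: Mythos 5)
Your plan coincides with the paper's own proof: it applies the summation formula \eqref{eq17} to half-integer WZ pairs, namely $F_1(n,k)=\frac{(-1)^{n+k}(n-k-1)!}{4(n+k)!}\bigl(\frac{1+a}{2}\bigr)_k\bigl(\frac{1-a}{2}\bigr)_k$ and $G_1(n,k)=\frac{(-1)^{n+k}(n-k)!\,(2n+1)}{(n+k)!\,((2n+1)^2-a^2)}\bigl(\frac{1+a}{2}\bigr)_k\bigl(\frac{1-a}{2}\bigr)_k$ for \eqref{eq15}, together with the twisted pair $F_2=\frac{(-1)^n}{2k+1}F_1$, $G_2=\frac{(-1)^n}{2n+1}G_1$ for \eqref{eq16}, followed by exactly the $a^2$-expansion and coefficient matching you describe. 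The one piece you leave open --- tuning the explicit rational prefactors --- is precisely what these formulas supply, and your predicted left-hand sides $\sum_{k=0}^n(-1)^k(2k+1)/((2k+1)^2-a^2)$ and $\sum_{k=0}^n 1/((2k+1)^2-a^2)$, as well as the role of the boundary term in producing the $\binom{n+k+1}{2k+1}$ correction sum, match the paper's argument verbatim.
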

\begin{proof}
As in the proof of Lemma \ref{l4.1}, applying summation formula (\ref{eq17}) to the WZ pairs
\begin{equation*}
F_1(n,k)=\frac{(-1)^{n+k}(n-k-1)!}{4(n+k)!}\,\Bigl(\frac{1+a}{2}\Bigr)_k\Bigl(\frac{1-a}{2}\Bigr)_k,
\quad n\ge k+1,
\end{equation*}
\begin{equation*}
G_1(n,k)=\frac{(-1)^{n+k}(n-k)! (2n+1)}{(n+k)!((2n+1)^2-a^2)}\,\Bigl(\frac{1+a}{2}\Bigr)_k\Bigl(\frac{1-a}{2}\Bigr)_k,
\quad n\ge k,
\end{equation*}
and
\begin{equation*}
F_2(n,k)=\frac{(-1)^n}{2k+1}\,F_1(n,k), \qquad G_2(n,k)=\frac{(-1)^n}{2n+1}\,G_1(n,k),
\end{equation*}
we get the required identities.
\end{proof}

\section{Proofs of Theorems \ref{t2.1} and \ref{t2.2}}

{\bf Proof of Theorem \ref{t2.1}}
Setting $n=p-1$ in (\ref{eq13}), we get
\begin{equation*}
\zeta_p(2m+2)=H_{p-1}(-2m-2)-\frac{1}{2}\sum_{k=1}^{p-1}\frac{(-1)^{k+m}H_{k-1}(\{2\}^m)}{k^2\binom{p-1}{k}\binom{p-1+k}{k}}.
\end{equation*}
Since
\begin{equation} \label{eq19}
\begin{split}
\frac{(-1)^k}{k^2\binom{p-1}{k}\binom{p-1+k}{k}}&=\frac{1}{pk}\prod_{m=1}^k\Bigl(1-\frac{p}{m}\Bigr)^{-1}
\prod_{m=1}^{k-1}\Bigl(1+\frac{p}{m}\Bigr)^{-1} \\[2pt]
&\equiv \frac{1}{pk}+\frac{1}{k^2}+\frac{pH_k(2)}{k} \pmod{p^2},
\end{split}
\end{equation}
we obtain
\begin{equation*}
\begin{split}
\zeta_p(2m+2)&\equiv -H_{p-1}(-2m-2)-\frac{(-1)^m}{2p}\,H_{p-1}(\{2\}^m,1) \\
&\quad-\frac{(-1)^m}{2}H_{p-1}(\{2\}^{m+1}) \pmod{p},
\end{split}
\end{equation*}
which by (i), (vi), and Theorem \ref{T3.3.2} implies the required congruence.

\noindent Similarly, setting $n=p-1$ in (\ref{eq14}) and applying (\ref{eq19}), (i) and Theorem \ref{T3.3.1} we get
\begin{equation*}
\begin{split}
\zeta_p(2m+3)&=H_{p-1}(2m+3)-\frac{1}{2}\sum_{k=1}^{p-1}\frac{(-1)^{k+m}H_{k-1}(\{2\}^m)}{k^3\binom{p-1}{k}
\binom{p-1+k}{k}} \\
&\equiv H_{p-1}(2m+3)+\frac{(-1)^{m-1}}{2}\Bigl(\frac{1}{p}H_{p-1}(\{2\}^{m+1})+
H_{p-1}(\{2\}^m,3)\Bigr) \\
&\equiv -\frac{B_{p-2m-3}}{2m+3}+\frac{(-1)^{m-1}}{2}\,H_{p-1}(\{2\}^m,3)\\[3pt]
&\equiv -\frac{(m+1)(2m+1)}{2m+3}B_{p-2m-3} \pmod{p}.
\end{split}
\end{equation*}

{\bf Proof of Theorem \ref{t2.2}}

Setting $n=(p-3)/2$ in (\ref{eq15}), we get
\begin{equation*}
\beta_p(2m+1)=\HH_{\frac{p-1}{2}}(-2m-1)+\frac{(-1)^{(p-1)/2+m}}{4}
\sum_{k=0}^{(p-3)/2}\frac{(-1)^k\binom{2k}{k}\HH_k(\{2\}^m)}{16^k(2k+1)\binom{(p-1)/2+k}{2k+1}}.
\end{equation*}
Taking into account that
\begin{equation} \label{eq21}
\frac{\binom{2k}{k}}{(-16)^k\binom{(p-1)/2+k}{2k+1}}=
\frac{2(2k+1)}{p-1-2k}\,\prod_{j=0}^{k-1}\Bigl(1-\frac{p^2}{(2j+1)^2}\Bigr)^{-1}
\equiv -2-\frac{2p}{2k+1} \pmod{p^2},
\end{equation}
we obtain by Theorem \ref{t3.1},
\begin{equation*}
\begin{split}
\beta_p(2m+1)&\equiv \HH_{\frac{p-1}{2}}(-2m-1)+\frac{(-1)^{(p+1)/2+m}}{2}
\sum_{k=0}^{(p-3)/2}\frac{\HH_k(\{2\}^m)}{2k+1}\Bigl(1+\frac{p}{2k+1}\Bigr)\\
&\equiv \HH_{\frac{p-1}{2}}(-2m-1)+\frac{(-1)^{(p+1)/2+m}}{2}\,\HH_{\frac{p-1}{2}}(\{2\}^m,1) \pmod{p^2},
\end{split}
\end{equation*}
which implies the first congruence of the theorem by Lemmas \ref{l3.2} and \ref{l01}, and Theorem \ref{t5}.

Similarly, from (\ref{eq16}) by (\ref{eq21}), Theorem \ref{t3.1}, Lemma \ref{l01} and Theorem \ref{t4},  we have
\begin{equation*}
\begin{split}
\overline{\zeta}_p(2m+2)&\equiv \HH_{\frac{p-1}{2}}(2m+2)+\frac{(-1)^{m-1}}{2}
\Bigl(\HH_{\frac{p-1}{2}}(\{2\}^{m+1})+p\HH_{\frac{p-1}{2}}(\{2\}^m,3)\Bigr) \\
&\equiv \frac{2m+1}{2^{2m+3}(2m+3)}\,p\,B_{p-2m-3}+\frac{(-1)^{m-1}}{2}\,p\,\HH_{\frac{p-1}{2}}(\{2\}^m,3) \\
&\equiv -\frac{2m^2+3m+2}{2^{2m+3}(2m+3)}\,pB_{p-2m-3}
\pmod{p^2}.
\end{split}
\end{equation*}

\vspace{0.2cm}

{\bf\small Acknowledgements.} The authors wish to thank the referees of the journal whose valuable remarks helped to improve the presentation of the paper.
This work was done while the first and second authors were on sabbatical in the Department of Mathematics and Statistics at Dalhousie University, Halifax, Canada.
The authors wish to thank the Chair of the Department, Professor~Karl Dilcher, for the hospitality, excellent working conditions, and useful discussions.
The third author would like to thank Sandro Mattarei for the valuable help in
improving the presentation of this paper.

\bibliographystyle{amsplain}

\begin{thebibliography}{99}

\bibitem{BBB:2006}
D.~H.~Bailey, J.~M.~Borwein, D.~M.~Bradley, {\it Experimental
determination of Ap\'ery-like identities for $\zeta(2n+2),$}
Experiment. Math. {\bf 15} (2006), no.~3, 281--289.  MR2264467 (2007f:11144)

\bibitem{Bl:99}
J.~Bl\"{u}mlen, S.~Kurth, {\it Harmonic sums and Mellin transforms up to two-loop order,}
Phys. Rev. D{\bf 60} (1999) 014018, 31pp.; arXiv: hep-ph/9810241.

\bibitem{BB:97}
J.~M.~Borwein, D.~M.~Bradley, {\it Empirically determined Ap\'ery-like formulae for $\zeta(4n+3),$}
Experiment. Math. {\bf 6} (1997), no.~3, 181--194.  MR1481588 (98m:11142)

\bibitem{Brown1}
F.~Brown, {\it Mixed Tate motives over ${\mathbb Z},$} Ann. of Math. {\bf 175} (2012), 949--976.

\bibitem{CD:06}
M.~Chamberland, K.~Dilcher, {\it Divisibility properties of a class of binomial sums,}
J.~Number Theory {\bf 120} (2006), 349--371.  MR2257551 (2007g:11020)

\bibitem{Di:95}
K.~Dilcher, {\it Some $q$-series identities related to divisor functions,} Discrete Math. {\bf 145} (1995), 83--93.  MR1356587 (96i:11020)

\bibitem{FS:98}
P.~Flajolet, B.~Salvy, {\it Euler sums and contour integral representations,} Experiment.~Math. {\bf 7} (1998), no.~1, 15--35.
MR1618286 (99c:11110)

\bibitem{G}
J.~W.~L.~Glaisher, {\it Congruences relating to the sums of products of the first $n$ numbers and to other sums of products,}
Quart. J. Math. {\bf 31} (1900), p.~231.

\bibitem{Go}
A.~B.~Goncharov, {\it Galois symmetries of fundamental groupoids and noncommutative geometry,} Duke Math. J. {\bf 128} (2005), 209--284.
 MR2140264 (2007b:11094)

\bibitem{Go:72}
H.~W.~Gould, {\it Combinatorial Identities. A Standardized Set of Tables Listing 500 Binomial Coefficient Summations}, Morgantown, W.Va., 1972.
 MR0354401

\bibitem{HH:08}
 Kh.~Hessami~Pilehrood, T.~Hessami~Pilehrood,
 {\it Simultaneous generation for zeta values by the Markov-WZ method,} Discrete Math. Theor. Comput. Sci. {\bf 10} (2008), no.~3, 115--123.
 MR2457055 (2009j:11133)

\bibitem{HH:10}
Kh.~Hessami~Pilehrood, T.~Hessami~Pilehrood,
{\it Series acceleration formulas for beta values},
Discrete Math. Theor. Comput. Sci. {\bf 12} (2010), 223--236. MR2676672 (2011j:05020)

\bibitem{HH:2011}
Kh.~Hessami~Pilehrood, T.~Hessami~Pilehrood,
{\it Bivariate identities for values of the Hurwitz zeta function and supercongruences,} Electron. J. Combin.
{\bf 18} (2011), no.~2, Paper 35, 30pp.  MR2900448

\bibitem{HH:11}
Kh.~Hessami~Pilehrood, T.~Hessami~Pilehrood,
{\it Congruences arising from Ap\'ery-type series for zeta values},
 Adv.~in Appl.~Math. {\bf 49} (2012),  218--238. MR3017957

\bibitem{HHT:11}
Kh.~Hessami Pilehrood, T.~Hessami Pilehrood, R.~Tauraso,
{\it Congruences concerning Jacobi polynomials and Ap\'ery-like formulae},
Int.~J.~Number Theory {\bf 8} (2012), no.~7, 1789--1811.

\bibitem{Ho:92}
M.~E.~Hoffman, {\it Multiple harmonic series,} Pacific J. Math. {\bf 152} (1992), no.~2, 275--290.  MR1141796 (92i:11089)

\bibitem{Ho:97}
M.~E.~Hoffman, {\it The algebra of multiple harmonic series,} J. Algebra {\bf 194} (1997), 477--495. MR1467164 (99e:11119)

\bibitem{Ho:03}
M.~E.~Hoffman, {\it Algebraic aspects of multiple zeta values,}
in: Zeta Functions, Topology and Quantum
Physics, Developments in Mathematics 14, T. Aoki et. al. (eds.), Springer, New York, 2005, pp. 51--73. MR2179272 (2006g:11185); preprint arxiv:math.QA/0309425

\bibitem{Hof:07}
M.~E.~Hoffman, {\it Quasi-symmetric functions and \textup{mod $p$} multiple harmonic sums,}
 arXiv:math.NT/0401319.


\bibitem{Ir}
K.~Ireland, M.~Rosen, {\it A Classical Introduction to Modern Number Theory.} Springer--Verlag, New York, 1990.  MR1070716 (92e:11001)

\bibitem{L:38}
E.~Lehmer, {\it On congruences involving Bernoulli numbers and the quotients of Fermat and Wilson,}
Ann.~of Math. (2) {\bf 39} (1938), no.~2, 350--360.  MR1503412

\bibitem{Le:81}
D.~Leshchiner, {\it Some new identities for $\zeta(k)$,} J. Number
Theory, {\bf 13} (1981), 355--362. MR0634205 (83k:10072)

\bibitem{OW:06}
Y.~Ohno, N.~Wakabayashi, {\it Cyclic sum of multiple zeta values,} Acta Arith. {\bf 123} (2006), no.~3, 289--295.
MR2263259 (2007g:11113)

\bibitem{Sunzh:00}
Z.~H.~Sun, {\it Congruences concerning Bernoulli numbers and Bernoulli polynomials,}
Discrete Appl. Math. {\bf 105} (2000), 193--223.  MR1780472 (2001m:11022)


\bibitem{TY:12}
K.~Tasaka, S.~Yamamoto, {\it On some multiple zeta-star values of one-two-three indices,}
arXiv:1203.1115v1 [math.NT]

\bibitem{Ta:10}
R.~Tauraso, {\it More congruences for central binomial coefficients,}  J. Number Theory, {\bf 130} (2010), 2639--2649.
MR2684486 (2011h:11018)

\bibitem{TZ:10}
R.~Tauraso, J.~Zhao, {\it Congruences of alternating multiple harmonic sums,} J. Comb.~Number Theory, {\bf 2} (2010), no.~2, 129--159.
MR2907787; preprint arXiv:0909.0670 [math.NT]

\bibitem{Ter}
T.~Terasoma, {\it Mixed Tate motives and multiple zeta values,} Invent. Math. {\bf 149} (2002), 339--369.  MR1918675 (2003h:11073)

\bibitem{Va:96}
D.~V.~Vasil'ev, {\it Some formulas for the Riemann zeta function at integer points,} Moscow Univ.~Math.~Bull.
{\bf 51} (1996), no.~1, 41--43. MR1489497

\bibitem{V:99}
J.~A.~M.~Vermaseren, {\it Harmonic sums, Mellin transforms and integrals,} Internat.~J. Modern Phys. A{\bf 14} (1999) 2037--2076.
MR1693541 (2000g:65001); arXiv: hep-ph/9806280.

\bibitem{Wa}
M.~Waldschmidt, {\it A course on multizeta values,} The Institute of Mathematical Sciences, Special year in number theory at IMS,
Chennai (India), April 2011; available at
http://www.math.jussieu.fr/$\thicksim$miw/articles/pdf/MZV2011IMSc.pdf

\bibitem{W}
J.~Wolstenholme, {\it On certain properties of prime numbers,} Quart. J. Pure Appl. Math. {\bf 5} (1862), 35--39.

\bibitem{Za:92}
D.~Zagier, {\it Values of zeta functions and their applications,} in: First European Congress of Mathematics, Vol.~II (Paris, 1992),
497--512; Progress Math., {\bf 120}, Birkh\"{a}user, Basel, 1994. MR1341859 (96k:11110)

\bibitem{Za:12} D. Zagier, {\it Evaluation of the multiple zeta values $\zeta(2,\dots,2,3,2,\dots,2)$},
Ann.~of Math. {\bf 175} (2012), 977--1000.

\bibitem{Zhao:06}
J.~Zhao,  {\it Wolstenholme type theorem for multiple harmonic sums,}
Int. J.~Number Theory, {\bf 4} (2008), no.~1, 73--106. MR2387917 (2008m:11006);
preprint arXiv:math.NT/0301252

\bibitem{Zhao:11}
J.~Zhao, {\it Mod $p$ structure of alternating and non-alternating multiple harmonic sums,}
J.~Th\'eor. Nombres Bordeaux {\bf 23} (2011), no.~1, 299--308. MR2780631 (2012g:11160)

\bibitem{Zhao:12}
J.~Zhao, {\it Finiteness of $p$-divisible sets of multiple harmonic sums,} Annales des Sciences Mathematiques du Quebec, published online
Dec.~2, 2011.

\bibitem{Zhao:12a}
J.~Zhao, {\it private communication.} June 2012.

\bibitem{Zl:05}
S.~A.~Zlobin, {\it Generating functions for the values of a multiple zeta function,} Moscow Univ.~Math.~Bull. {\bf 60} (2005), no.~2, 44--48.
MR2152592 (2006d:11104)

\end{thebibliography}

\end{document}